\title{An Overview of GPU-based First-Order Methods for Linear Programming and Extensions}
\author{Haihao Lu\thanks{MIT, Sloan School of Management (haihao@mit.edu).} \and Jinwen Yang\thanks{University of Chicago, Department of Statistics (jinweny@uchicago.edu).}}
\date{}
\begin{document}

\maketitle

\begin{abstract}
The rapid progress in GPU computing has revolutionized many fields, yet its potential in mathematical programming, such as linear programming (LP), has only recently begun to be realized. This survey aims to provide a comprehensive overview of recent advancements in GPU-based first-order methods for LP, with a particular focus on the design and development of cuPDLP. We begin by presenting the design principles and algorithmic foundation of the primal-dual hybrid gradient (PDHG) method, which forms the core of the solver. Practical enhancements, such as adaptive restarts, preconditioning, Halpern-type acceleration and infeasibility detection, are discussed in detail, along with empirical comparisons against industrial-grade solvers, highlighting the scalability and efficiency of cuPDLP. We also provide a unified theoretical framework for understanding PDHG, covering both classical and recent results on sublinear and linear convergence under sharpness conditions. Finally, we extend the discussion to GPU-based optimization beyond LP, including quadratic, semidefinite, conic, and nonlinear programming. 
\end{abstract}

\tableofcontents

\section{Introduction}
Over the past decade, the scale and sophistication of computational infrastructure, thanks to the development of machine and deep learning, have advanced at an extraordinary pace. Enabled by rapid developments in both hardware and software, the deep learning field now routinely trains models with trillions of parameters. This progress has been driven by massively parallel accelerators such as GPUs and TPUs, coupled with highly optimized software frameworks like PyTorch and JAX, which emphasize first-order methods (FOMs) and large-scale data-parallel computation.

In contrast, the computational infrastructure for mathematical programming, for example, linear programs (LPs), has remained grounded in a mature but shared-memory CPU-centric paradigm. Industrial-grade optimization solvers build on decades of algorithmic innovation, primarily leveraging simplex and interior-point methods. These methods are tightly coupled with direct sparse matrix factorizations and are carefully engineered for shared-memory CPU architectures. As a result, they offer highly accurate and certifiably optimal solutions across a broad range of applications.

Despite such algorithmic maturity, the size of solvable LPs remains constrained by current shared-memory CPU technologies, struggling to handle LPs containing billions of variables even on high-end CPUs. This limitation starkly contrasts with the immense scale routinely achieved by modern deep learning models.\footnote{We acknowledge that ``solving'' typically denotes differing levels of accuracy in deep learning versus linear programming contexts. Nonetheless, a substantial disparity exists regarding the tractable problem sizes in these two fields.}
This growing scalability gap raises an essential question:

\vspace{0.1cm}
\begin{center}
\textit{Can we leverage GPUs and first-order methods to improve the scalability and efficiency of linear programming?}
\end{center}
\vspace{0.1cm}

This question is increasingly important in practice, driven by the expanding data collection techniques employed in large-scale applications across numerous domains, including engineering, finance, and operations. In these fields, scalable and efficient optimization tools are often vital. Developing GPU-based first-order method solvers for LPs thus represents an exciting opportunity to close this computational gap and extend the advantages of modern parallel hardware to classical optimization challenges.

In this survey, we provide an overview of the emerging field of GPU-based linear programming, with a particular focus on recent developments surrounding (cu)PDLP. We examine its algorithmic foundations, computational techniques, and theoretical properties, aiming to offer a cohesive understanding of this rapidly evolving area.

\subsection{Linear Programming}
Linear programming (LP), which refers to optimizing a linear function over a system of linear inequality constraints, is one of the most fundamental classes of mathematical programming. It is used in many arenas of the global economy, including transportation, telecommunications, production and operations scheduling, as well as in support of strategic decision-making~\cite{hazell1974competitive,delson1992linear,dahleh1994control,liu2008choice,charnes1959application,zhou2008linear}. LP is used to allocate resources, plan production, schedule workers, plan investment portfolios and formulate marketing (and military) strategies. The versatility and economic impact of linear optimization models in today’s industrial world is truly awesome~\cite{freund1994professor,sarker2007optimization}.

The geometry of linear programming is rooted in the structure of polyhedra, which are sets defined as the intersection of a finite number of linear inequalities. In LP, the feasible region, determined by the system of linear constraints, describes a polyhedron. Each inequality constraint defines a half-space, and the intersection of these half-spaces is a polyhedron. The vertices of the polyhedron are critical in LP because there is at least one optimal solution at a vertex. The level set of the linear objective function defines a hyperplane, and solving the LP involves moving this hyperplane to its furthest extent while remaining tangent to the polyhedron. This interplay between polyhedral geometry and optimization underpins the algorithmic approaches to solving LPs, such as simplex methods, ellipsoid methods, interior-point methods (also known as barrier methods) and decomposition methods. 

\begin{itemize}
    \item {\bf Simplex method.} The simplex method, introduced by George Dantzig in 1947~\cite{dantzig1948programming}, is widely regarded as the origin of optimization as a scientific discipline. The simplex method operates by moving along the vertices of the feasible polyhedron to iteratively improve the objective function. Despite its exponential worst-case complexity~\cite{klee1972good}, its strong practical performance earned it recognition as one of the Top 10 Algorithms of the 20th century~\cite{cipra2000best}. A key extension is the dual simplex method~\cite{dantzig1963linear}, which solves a slightly modified problem. To further improve efficiency of primal and dual simplex, the steepest edge variation of the simplex algorithm~\cite{forrest1992steepest} was developed as a pivoting rule that selects the direction yielding the greatest improvement per unit move. This heuristic often reduces iteration count by guiding the search more effectively. Together, these and many other advencements, including primal simplex, dual simplex, and steepest edge, remain central components of many modern LP solvers~\cite{koberstein2005dual,maros2002computational}.
    
    \item {\bf Ellipsoid method.} Introduced by Nemirovski and Yudin in the 1970s~\cite{yudin1976informational}, the ellipsoid method marked a major milestone in optimization theory. Its significance in linear programming was cemented by Khachiyan’s 1979 result showing that LPs could be solved in polynomial time~\cite{khachiyan1980polynomial}. Unlike the simplex method, which moves along vertices, the ellipsoid method iteratively shrinks an enclosing ellipsoid containing an optimal solution. This geometric approach established a foundational result in complexity theory~\cite{bland1981ellipsoid}.
    Despite its theoretical importance, the ellipsoid method saw limited practical use due to its slow convergence and high computational overhead compared to simplex method (and later interior-point methods). Nonetheless, it reshaped the landscape of optimization and helped catalyze the development of more efficient polynomial-time algorithms, such as interior-point methods.
    
   \item {\bf Interior-point methods.} Interior-point methods (IPMs) represent a major advancement in optimization, offering an alternative to vertex-based approaches like the simplex method~\cite{nesterov1994interior,wright1997primal,renegar2001mathematical}. Early ideas trace back to the 1960s, with Fiacco and McCormick’s barrier methods for nonlinear programming~\cite{fiacco1964sequential,fiacco1964computational}, and Dikin’s iterative interior-point algorithm in 1967~\cite{dikin1967iterative}. These methods, however, saw limited use due to computational challenges in handling large-scale problems and the inherent numerical instability of the methods.

    The breakthrough came in 1984 with Karmarkar’s projective scaling algorithm~\cite{karmarkar1984new}, a polynomial-time method that operates within the interior of the feasible region, offering a theoretically efficient and practically competitive alternative to simplex. This sparked widespread interest and led to the development of more powerful variants, most notably the primal-dual interior-point method~\cite{wright1997primal,mehrotra1992implementation}, known for its robust convergence and efficiency in simultaneously solving primal and dual formulations.

    Modern IPMs rely on Newton-like iterations to solve systems derived from the Karush-Kuhn-Tucker (KKT) conditions and have been extended to broader problem classes, including convex quadratic, conic, and semidefinite programming~\cite{vandenberghe2010cvxopt,ye1994nl,nesterov1998primal,nesterov1997self,andersen2003implementing,dahl2022primal,wachter2006implementation,toh1999sdpt3}. Their polynomial-time guarantees and strong empirical performance have made IPMs a cornerstone of modern solvers.

    \item {\bf Decomposition methods.} Decomposition techniques, such as Dantzig-Wolfe~\cite{dantzig1960decomposition} and Benders~\cite{bnnobrs1962partitioning}, have been pivotal in solving large-scale LPs with particular problem structure. Dantzig-Wolfe decomposition targets block-angular problems that are common in multi-stage planning and network flows by reformulating them into a master problem and independent subproblems, iteratively improving the solution via column generation. This approach underpins methods used in applications like vehicle routing and airline crew scheduling. In addition, Benders decomposition handles problems with complicating variables by separating them into a master problem and subproblem. Benders cuts, derived from solving the subproblem, are added iteratively to the master until convergence. Benders decomposition has been widely used in facility location, energy planning, and telecommunications. While effective for structured problems, both techniques suffer from slow convergence in later iterations and are less applicable to general-purpose LPs due to their reliance on specific structural assumptions.
\end{itemize}

\subsection{Comparison between CPU and GPU}
Despite their differences in theoretical foundations and practical applications, each of the methods for LP has been profoundly shaped by the architecture of the Central Processing Unit (CPU), which served as the dominant computational platform for decades and was virtually the only widely accessible and rapidly advancing computing hardware for a long time. As a general-purpose processor good at sequential execution, a CPU typically comprises a small number of powerful cores and complex control logic. This architecture excels at low-latency, instruction-by-instruction processing, making it particularly well-suited for algorithms with strong sequential characteristics.

The influence of this CPU-centric paradigm on LP algorithm design has been substantial. Classical methods such as the simplex algorithm are inherently sequential, progressing through a series of pivot steps where each iteration depends on the outcome of the previous one. Similarly, interior-point methods require the repeated solution of linear systems, a process that, while admitting partial parallelism, remains dominated by sequential operations. Consequently, algorithm designers prioritized reducing the number of iterations and maximizing the efficiency of core computational routines. This led to the development of highly optimized data structures for sparse matrices and finely tuned linear algebra kernels that leverage the CPU’s fast memory access patterns and cache performance. As a result, modern LP solvers have become remarkably efficient on single-threaded or moderately parallel CPU architectures, setting a high bar for performance that alternative platforms must strive to meet.

More recent advances in computational hardware have brought about a paradigm shift in high-performance computing, most notably through the rise of Graphics Processing Units (GPUs). Originally developed to accelerate rendering in computer graphics, GPUs have evolved into powerful general-purpose computing platforms. This transition has been accelerated by the unprecedented success of deep learning, where GPUs have proven indispensable for training and inference in large-scale neural networks~\cite{nvidiaai}. Their ability to perform massive numbers of floating-point operations in parallel has also made GPUs a cornerstone of modern machine learning, scientific simulation, and data analytics~\cite{nvidiahpc}.

At a low level, the design philosophy of GPUs differs fundamentally from that of CPUs. Whereas CPUs are optimized for low-latency execution of complex, sequential tasks, GPUs are optimized for high-throughput execution of simple, parallel tasks. A typical GPU contains thousands of lightweight cores organized into streaming multiprocessors, each capable of performing arithmetic operations simultaneously across vast arrays of data. This architecture is accompanied by a memory hierarchy designed to support data-parallel computation, though with higher latency and less flexible control flow than CPUs. As a result, while a CPU may excel at single-threaded tasks, a GPU can deliver orders-of-magnitude higher peak performance on workloads that can be expressed in parallel form.

\begin{table}[ht!]
\begin{tabular}{lcc}
\hline
% \vspace*{0.05cm}
& \multicolumn{1}{c}{\textbf{GPU}} & \multicolumn{1}{c}{\textbf{CPU}}\\
\hline
\textbf{Core Design}           & thousands of lightweight cores             & fewer, more powerful cores              \\
% & & \\
\textbf{Control and Execution} & shared control units, batch processing & dedicated control units, task switching \\
% & & \\
\textbf{Memory Design}     & huge throughput                        & low latency                             \\
% & & \\
\textbf{Processing Model}      & thousands of simple threads       & fewer, more complex threads             \\ 
% & & \\
\hline
\end{tabular}
\caption{Summary of major distinctions between GPUs and CPUs}
\label{tab:cpu-gpu}
\end{table}

Table \ref{tab:cpu-gpu} further highlights the distinctions between GPUs and CPUs, reflecting their specialized architectures. GPUs prioritize parallel execution, featuring thousands of smaller cores and shared control units for batch processing, making them ideal for high-throughput\footnote{Throughput refers to the amount of data processed per unit time.}  tasks. In contrast, CPUs have fewer, more powerful cores with dedicated control units, excelling in task switching and complex decision-making. Furthermore, GPUs maximize throughput with high-bandwidth memory\footnote{Memory bandwidth measures the peak rate at which data can be read from or written to memory per unit time, determining the maximum throughput for data transfer between memory and processing units.}, while CPUs prioritize low-latency\footnote{Memory latency refers to the delay between a memory access request and the availability of the requested data, impacting how quickly a processor can retrieve information.} access for sequential processing. GPUs handle thousands of lightweight threads efficiently, whereas CPUs manage fewer but more complex threads. Together, these differences define their respective strengths, with GPUs optimized for large-scale computations and CPUs better suited for logic-intensive tasks. 

These architectural features of GPUs naturally lead to a broader question:
\textit{Can GPUs be leveraged to accelerate and scale up linear programming—and more generally, mathematical programming?}
A natural starting point is to use GPUs to accelerate key computational components of linear programming solvers, namely, the basic linear algebra subroutines (BLAS). In large-scale mathematical programming, three such subroutines are especially prevalent: sparse matrix-vector multiplication (SpMV), commonly used in various first-order methods; sparse Cholesky factorization, widely used in interior-point methods for linear programming\footnote{{The experiment employs the CHOLMOD package for CPU-based factorization and the cuDSS package for GPU-based factorization. While both libraries provide robust and efficient sparse direct solvers, the use of a customized factorization routine tailored to the specific problem structure may lead to substantial performance gains.}}; and sparse $\operatorname{LDL^\top}$ factorization, frequently employed in Alternating Direction Method of Multipliers (ADMM) for LP.

Recent developments in GPU sparse linear algebra libraries, such as \href{https://developer.nvidia.com/cusparse}{cuSPARSE} and \href{https://docs.nvidia.com/cuda/cudss/index.html}{cuDSS} by NVIDIA, have enabled GPU acceleration for all three operations. However, the degree of acceleration varies considerably across these tasks. As illustrated in Figure~\ref{fig:spmv-chol}, GPU speedup for SpMV scales linearly with problem size, making it particularly well-suited for large-scale problems. In contrast, the speedups for Cholesky and $\operatorname{LDL^\top}$ factorizations depend heavily on the structure and sparsity of the problem instance, showing no consistent scaling behavior. Moreover, the overall performance gains are substantially higher for SpMV compared to Cholesky, while $\operatorname{LDL^\top}$ factorization on average shows little to no clear advantage on GPUs.

\begin{figure}[ht!]
	% \centering
    \hspace{-1.3cm}
	\begin{tabular}{c c c c c}
		& \includegraphics[width=0.33\textwidth]{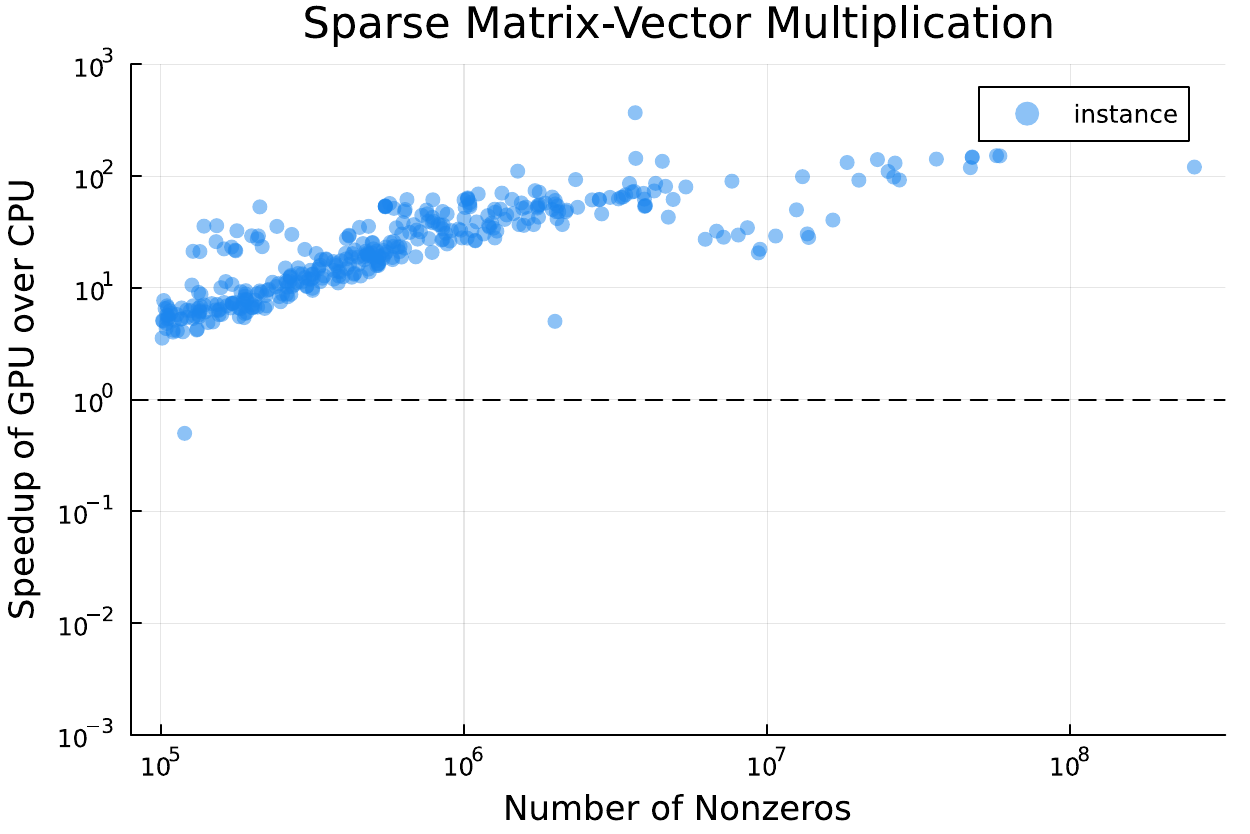}
        &
        & \includegraphics[width=0.33\textwidth]{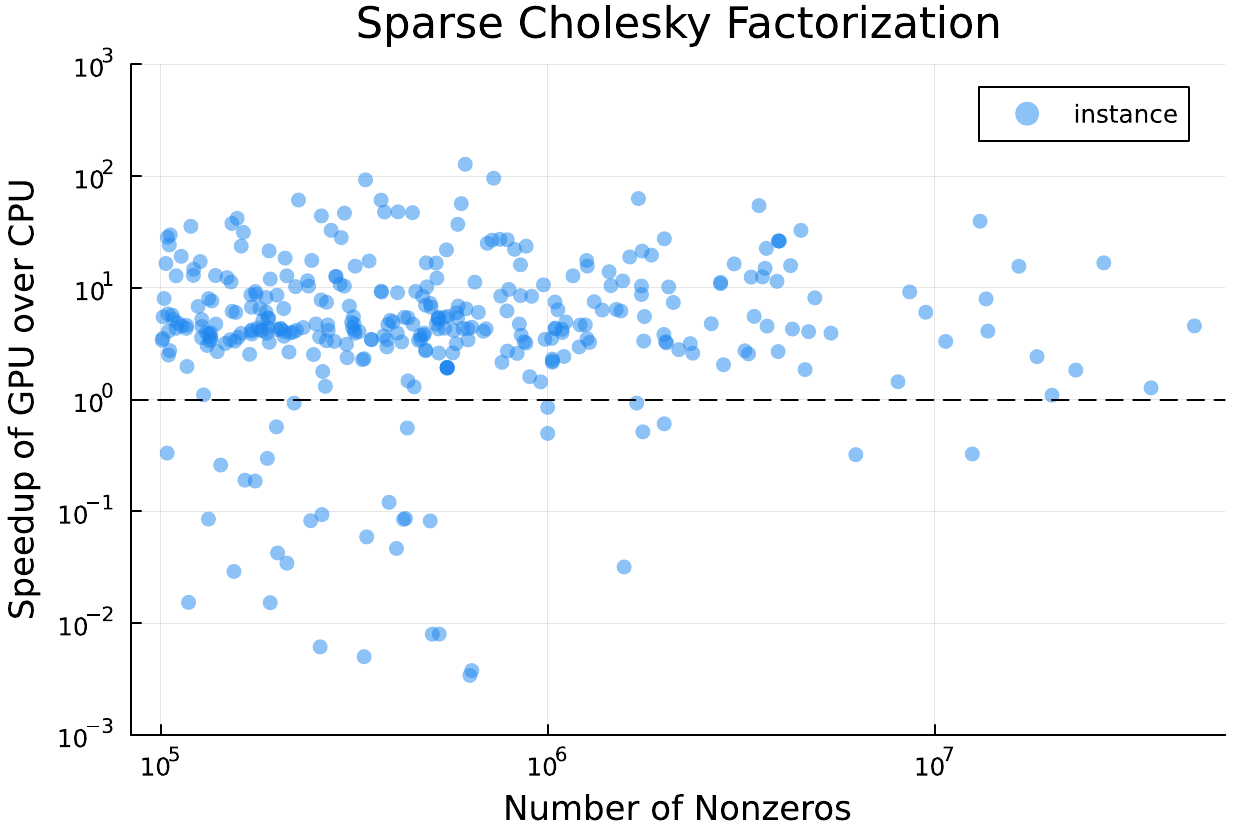} 
        & \includegraphics[width=0.33\textwidth]{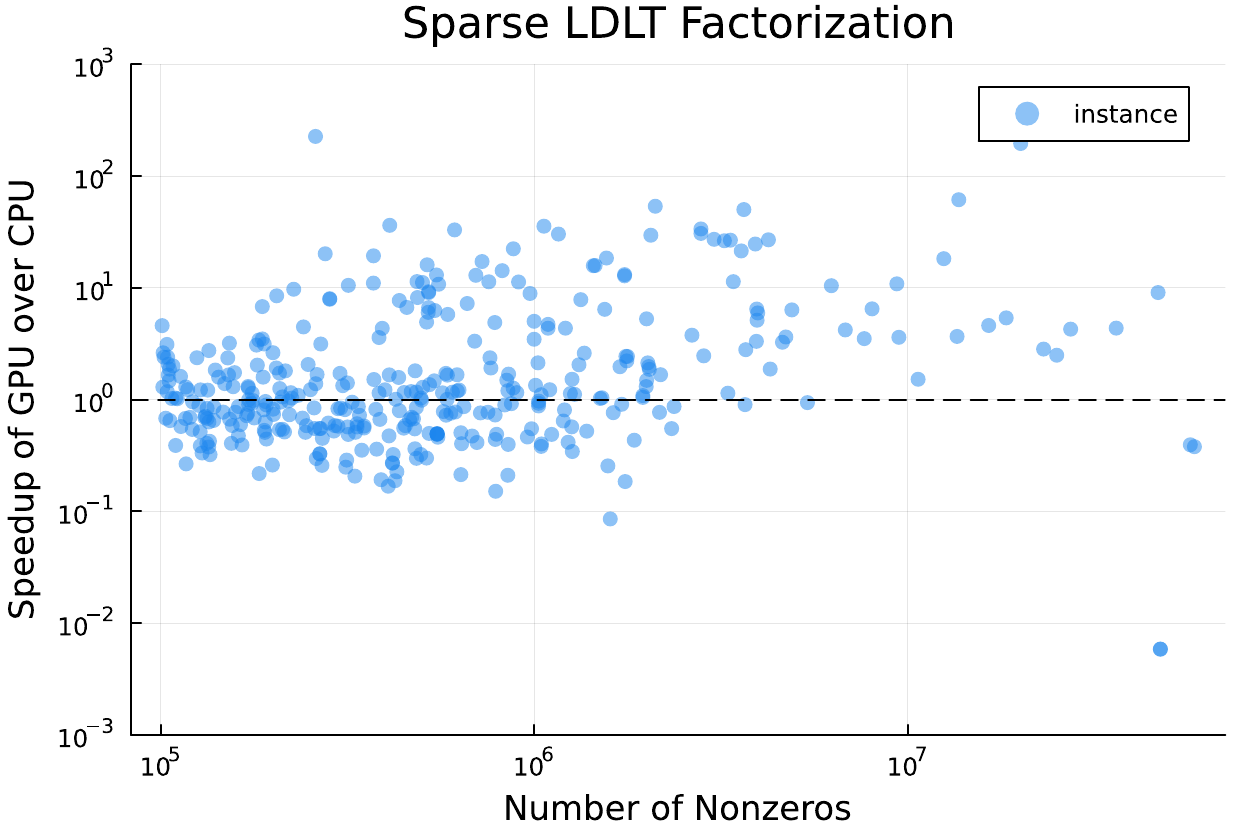}
	\end{tabular}
	\caption{Scatter plots showing the speedup in running time speedup of GPU versus CPU for SpMV, Cholesky factorization and  $\operatorname{LDL^\top}$ factorization with data of 383 MIPLIB instances (see Section \ref{sec:setup} for detail). {Particularly, SpMV computes $Ax$ with a random $x$, sparse Cholesky factorization computes Cholesky factor of matrix $AA^\top$, and sparse $\operatorname{LDL^\top}$ factorization computes $\operatorname{LDL^\top}$ factor of $\begin{pmatrix}
	    I & -A^\top\\-A & 0
	\end{pmatrix}$, where $A$ is the constraint matrix in the problem instance.} The $x$-axis in the plots represents the number of nonzero elements in the sparse matrices in the benchmark set, while the $y$-axis shows the speedup of GPU over CPU on a logarithmic scale. A horizontal dashed line at speedup equal 1 marks the threshold where GPU and CPU perform equally; points above this line indicate cases where GPU outperforms CPU, whereas points below indicate cases where CPU is faster.}
	\label{fig:spmv-chol}
\end{figure}

Furthermore, despite the low-level speedups achievable through GPU-accelerated sparse Cholesky factorization, the overall performance gains for fully GPU-based interior-point LP solvers remain modest. One key reason is that Cholesky factorization, while computationally intensive, does not consistently dominate the total runtime in modern solvers. For example, as observed by Gurobi~\cite{gurobinews}, substantial portions of computational effort are spent on other stages such as step computation and control logic, which are inherently sequential and difficult to parallelize. These components often remain bottlenecks even when linear algebra routines are accelerated.  As such, significant improvements in factorization speed do not always translate into significant improvements in overall LP solve times, unless accompanied by a more comprehensive redesign of the solver pipeline to expose and exploit parallelism throughout.

\begin{table}[ht!]
\centering
\begin{tabular}{cccc:c}
\hline
                 & \textbf{SpMV}    & \textbf{Cholesky Factorization} & \textbf{$\operatorname{\bf LDL^{\bf \top}}$ Factorization} & \textbf{Instances} \\ \hline
\textbf{Allocated Memory} & 0.971 MB & 481.067 MB  & 243.486 MB & 4.915 MB            \\ \hline
\end{tabular}
\caption{Geometric mean of additional memory usage of GPU-based SpMV, sparse Cholesky and $\operatorname{LDL^\top}$ factorizations on 383 MIPLIB instances (see Section \ref{sec:setup} for detail).}
\label{tab:mem}
\end{table}

In addition, GPUs face significant memory constraints that further restrict their utility for large-scale LPs. While GPUs offer superior floating-point throughput, their onboard memory is typically an order of magnitude smaller than that of high-end CPUs. This is especially limiting for interior-point methods, where Cholesky or $\operatorname{LDL^\top}$ factorizations can require one to two orders of magnitude more memory than the original problem data. Table~\ref{tab:mem} reports the geometric mean of additional memory requirements for SpMV, Cholesky factorization, and $\operatorname{LDL^\top}$ factorization across 383 MIPLIB instances. While SpMV imposes negligible overhead, both factorization methods demand substantially more memory, further constraining the applicability of GPU-based interior-point methods to very large LPs.

These challenges motivate the development of new linear programming methods that are more naturally aligned with GPU architectures. To fully leverage the computational potential of GPUs, such methods should satisfy two key criteria: (1) the majority of computational time should be concentrated in components that are highly parallelizable, with minimal reliance on sequential or control-heavy routines. Only under this condition can LP algorithms effectively exploit the massive throughput and memory bandwidth advantages offered by GPU hardware. (2) Due to limited device memory of GPUs, computational primitives with cheaper memory cost are favorable, particularly for large-scale instances.

\subsection{First-order Methods for LP}

A promising direction under this paradigm is to build LP algorithms around SpMV as a core primitive, which is one of the most memory-efficient and parallelizable operations available for large-scale optimization problems. On modern GPUs, carefully optimized SpMV routines can achieve substantial speedups over their CPU counterparts. Crucially, SpMV avoids the substantial memory costs associated with sparse matrix factorizations, as compared in Table \ref{tab:mem}, and it scales well with problem size. As shown in the leftmost panel of Figure \ref{fig:spmv-chol} for SpMV, GPU exhibits a clear advantage over CPU as the number of nonzeros increases. The speedup is consistently above 1 for most instances, reaching up to more than 100x for larger matrices. This trend aligns with expectations, as SpMV is highly parallelizable and benefits from GPU acceleration. Notably, the speedup gets larger as instance sizes grow, suggesting that GPUs gain more speedup over CPUs for larger-scale SpMV.

In this context, first-order methods (FOMs)~\cite{nesterov2018lectures,beck2017first}, which can often purely base on matrix-vector multiplication, emerge as a promising alternative. Unlike traditional LP algorithms that rely on matrix factorization, FOMs update their iterates using only gradient information, making matrix-vector multiplication the primary computational bottleneck. This makes SpMV an ideal foundation for FOMs tailored to GPU architectures. Unlike traditional approaches that involve costly factorization steps, FOMs based on SpMV are structurally simple and computationally transparent: they spend nearly all of their runtime on operations that are well-suited to GPUs, primarily repeated SpMV and vector updates. These operations are not only memory-efficient but also highly parallelizable, enabling solvers to take full advantage of the GPU’s throughput and bandwidth. The minimal reliance on sequential logic or control-heavy routines further enhances efficiency and implementation simplicity. As a result, SpMV-centric FOMs offer a compelling pathway toward LP solvers that are both efficient and highly compatible with modern GPU computing, particularly in applications where solving many large-scale LPs rapidly is critical.

Indeed, using FOMs for LP is not a new idea. As early as the 1950s, initial efforts were made to employ FOMs for solving LP problems. Notably, with the intuition to make big jumps rather than ``crawling along edges", \cite{brown1951computational} discussed steepest ascent to maximize the linear objective under linear inequality constraints. \cite{zoutendijk1960methods,zoutendijk1970some} pioneered the development of feasible direction methods for LP, where the iterates consistently move along a feasible descent direction. The subsequent development of steepest descent gravitational methods in~\cite{chang1989steepest} also falls within this category. Another early approach to first-order methods for solving LP is the utilization of projected gradient algorithms~\cite{rosen1961gradient,lemke1961constrained}. All these methods, however, still require solving linear systems to determine the appropriate direction for advancement. Solving the linear systems that arise during the update can be highly challenging for large instances. Moreover, the computation of projections onto polyhedral constraints can be arduous and even intractable for large-scale instances. Therefore, despite their potential, these FOMs are also not inherently well-suited for GPUs. 

Thus, the development of new FOM-based LP algorithms specifically tailored for GPUs is essential to fully harness their computational power. Recently, there has been significant progress in the creation of GPU-based LP solvers, improving solution time for LPs as problem size grows. These advancements aim to bridge the gap between the computational capabilities of GPUs and the requirements of LP algorithms, particularly,
\begin{itemize}
    \item PDLP~\cite{applegate2021practical,applegate2025pdlp} is a linear programming solver based on first-order methods, designed to be practical for large-scale problems. It builds upon the restarted primal-dual hybrid gradient (PDHG) method~\cite{applegate2023faster}, which avoids matrix factorizations in favor of simple iterative updates involving matrix-vector multiplications, with many heuristic improvements for practical performance. PDHG is known for its scalability and efficiency on large problems, and it avoids computationally sequential operations, making it well-suited for parallel hardware. With its focus on scalability and parallelizability, the CPU-based PDLP has paved the way for a new class of LP solvers aligned with modern computing architectures such as GPUs.

    \item cuPDLP~\cite{lu2023cupdlp,lu2023cupdlpc} is a GPU-based solver for large-scale linear programming that builds upon the restarted primal-dual hybrid gradient method~\cite{applegate2023faster}. As a GPU-based extension of PDLP~\cite{applegate2021practical,applegate2025pdlp}, cuPDLP inherits the theoretical strengths of PDHG while incorporating several heuristic improvements tailored for modern GPUs. By leveraging these techniques, cuPDLP scales efficiently to solve massive LP problems and achieves strong performance, highlighting the potential of first-order methods like PDHG to address the challenges of large-scale optimization in GPU-based environments.
    
    \item HPR-LP~\cite{chen2024hpr} is a recently developed GPU-based solver for linear programming that introduces a novel approach to tackling large-scale LP problems. It employs the Halpern Peaceman-Rachford (HPR) method with semi-proximal terms, leveraging the method's theoretical strengths, including robust convergence properties and enhanced efficiency for solving LP problems. To further improve practical performance, HPR-LP integrates adaptive techniques such as dynamic restarts and penalty parameter updates, ensuring better scalability and robustness. The solver demonstrates remarkable improvements in computational speed, making it a promising tool for addressing large-scale LPs.

    \item Another approach is the combination of the Alternating Direction Method of Multipliers (ADMM)~\cite{eckstein1992douglas,boyd2011distributed} with conjugate gradient (CG) methods. Conjugate gradient methods solve linear systems using only matrix-vector multiplications, which can be efficiently parallelized on GPUs. This approach has been adopted in quadratic programming (QP) solvers such as OSQP~\cite{stellato2020osqp} and conic programming solvers like SCS~\cite{o2016conic,o2021operator}, both of which utilize CG on GPUs to solve linear systems. However, a notable limitation of this approach is that each iteration of the ADMM algorithm typically requires tens to hundreds of conjugate gradient steps (i.e., several matrix-vector multiplications), which can result in significant computational overhead. Despite this, the integration of ADMM with CG remains a viable direction for leveraging GPUs in optimization.

\end{itemize}

\subsection{Other Mathematical Programming}

Beyond linear programming, there have been significant advancements in GPU-based solvers for other classes of optimization problems. These include PDQP~\cite{lu2023practical} and PDHCG~\cite{huang2024restarted}, which are designed for solving convex quadratic programming problems, leveraging GPU parallelism to handle the large-scale matrix-vector operations inherent in these tasks. Similarly, FOM-based solvers such as cuLoRADS~\cite{han2024accelerating}, cuHALLaR~\cite{aguirre2025cuhallar} and ALORA~\cite{ding2025new}, have emerged for semidefinite programming, utilizing low-rank factorization and GPU-optimized computations to solve massive problems that were previously intractable. For conic programming, PDCS~\cite{lin2025pdcs} extends cuPDLP to solve conic linear programming, while  CuClarabel~\cite{chen2024cuclarabel} incorporates GPU-specific techniques to accelerate the solution of problems involving second-order, semidefinite, and exponential cones. In the realm of nonlinear programming, MadNLP~\cite{shin2020graph,shin2023accelerating} has demonstrated the potential of GPU-based solvers by employing efficient differentiation and condensed-space interior-point methods tailored for GPUs.

These advances demonstrate the growing potential of GPU-based optimization solvers to handle increasingly complex and large-scale problems across various domains. As GPU architectures continue to evolve, and with the increasing demand for solving high-dimensional optimization problems in fields like machine learning, engineering, and operations research, we anticipate even more innovative developments in this area, further expanding the capabilities and efficiency of GPU-based solvers.

\subsection{Paper Organization}

This survey provides an overview of recent advancements in this new area of research. Section \ref{sec:design} outlines the step-by-step design of basic first-order methods for linear programming. In Section \ref{sec:pdlp}, we examine various practical enhancements in PDLP that improve convergence and provide a numerical comparison between cuPDLP and industrial-grade solvers. Section \ref{sec:theory} offers theoretical insights into PDHG for LP. In Section \ref{sec:beyondlp}, we explore recent developments in GPU-based solvers beyond LP. Finally, we conclude with a summary of key findings and open questions in the field.

\section{Designing FOMs for LP from First Principles}\label{sec:design}

In this section, we discuss how to design different FOMs for LP. We start by discussing the basic FOMs, including, projected gradient descent (PGD), gradient descent ascent (GDA), and proximal point method (PPM) to solve LP as well as their limitations. This leads to primal-dual hybrid gradient (PDHG), the base algorithm utilized in an LP solver PDLP, described in the next section. 

More specifically, we consider standard form LP of the form

\begin{equation}\label{eq:primal}
    \begin{aligned}[c]
    \min_{x\in \mathbb R^n}~~ &~ c^\top x \\
\text{s.t.}~~ &~ Ax=b \\
& ~ x\geq 0 \ ,
    \end{aligned}
\end{equation}
where $A\in\mathbb R^{m\times n}$, $b\in \mathbb R^m$ and $c\in \mathbb R^n$, and its dual problem

\begin{equation}\label{eq:dual}
    \begin{aligned}[c]
    \max_{y\in \mathbb R^m}~~ &~ b^\top y \\
\text{s.t.}~~ &~ A^\top y\leq c \ .
    \end{aligned}
\end{equation}
While the discussion in this section is on standard-form LP~\eqref{eq:primal}, most of the algorithms and their behaviors can be directly extended to other LP forms.

The most basic FOM for solving a constrained optimization problem, such as LP \eqref{eq:primal}, is perhaps the projected gradient descent (PGD), which updates the variable by a gradient descent step and then projects to the constraint set:
\begin{equation*}\label{alg:pgd}
    x^{k+1}=\text{proj}_{\{x\in\mathbb R_+^n|Ax=b\}}(x^k-\eta c)\ ,
\end{equation*}
where $\eta$ is the step-size of the algorithm.
Despite the nice theoretical guarantees of PGD for general constrained convex optimization problems~\cite{nesterov2018lectures,beck2017first}, computing the projection onto the constrained set (i.e., the intersection of an affine subspace and the positive orthant) involves solving a quadratic programming problem, which can be as hard as or even harder than solving the original LP, and thus PGD is not a practical algorithm. 

To disentangle linear constraints and (simple) nonnegativity of variables, a natural idea is to dualize the linear constraints $Ax=b$ and consider the primal-dual form of the problem \eqref{eq:minmax}:
\begin{equation}\label{eq:minmax}
    \min_{x}\max_y\; L(x,y):=c^\top x+\iota_{\mathbb R_+^n}(x)-y^\top Ax+b^\top y \ .
\end{equation}
Convex duality theory~\cite{boyd2004convex} shows that the saddle points to \eqref{eq:minmax} can recover the optimal solutions to the primal problem~\eqref{eq:primal} and the dual problem~\eqref{eq:dual}. 
For the primal-dual formulation of LP~\eqref{eq:minmax}, the most natural FOM  is perhaps the projected gradient descent-ascent method (GDA), which performs a gradient descent step in $x$ and a gradient ascent step in $y$, has iterated update:
\begin{equation*}\label{alg:gda}
    \begin{aligned}
        & x^{k+1}=\text{proj}_{\mathbb R^n_+}(x^k+\eta A^\top y^k-\eta c) \\ 
        & y^{k+1}= y^k-\sigma Ax^k+\sigma b\ ,
    \end{aligned}
\end{equation*}
where $\eta$ and $\sigma$ are the primal and the dual step-size, respectively.
The projection of GDA is onto the positive orthant for the primal variables and it is cheap to compute. Unfortunately, GDA does not converge to a saddle point of \eqref{eq:minmax}. For instance, Figure \ref{fig:pdhg-gda} plots the trajectory of GDA on a simple primal-dual form of LP
\begin{equation}\label{eq:bilinear}
    \min_{x\ge 0}\max_y\; (x-3)y \ ,
\end{equation}
where $(3,0)$ is the unique saddle point. As we can see, the GDA iterates diverge and spiral farther away from the saddle point. Thus, GDA is not an appropriate algorithm for \eqref{eq:minmax}.

\begin{figure}[ht!]
	\centering\includegraphics[scale=0.4]{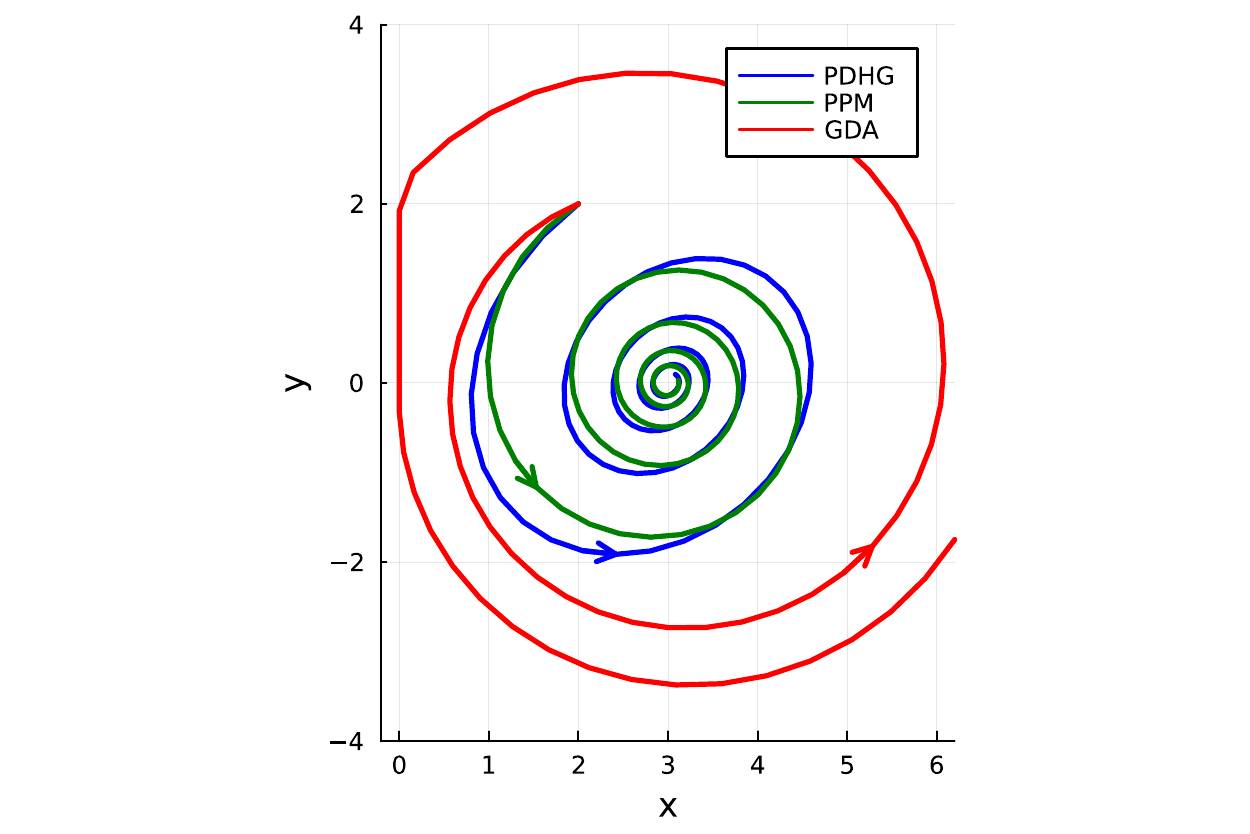}
	\caption{Trajectories of GDA, PPM and PDHG to solve a simple bilinear problem \eqref{eq:bilinear} with initial solution $(2, 2)$ and step-size $\eta=\sigma=0.2$.}
	\label{fig:pdhg-gda}
\end{figure}

Another classic candidate algorithm to solve such primal-dual problem is the proximal point method (PPM), proposed in the seminal work of  Rockafellar~\cite{rockafellar1976monotone}. The basic idea of PPM is to use the (sub-)gradient in the next iteration to update the step, in constrast to GDA where the (sub-)gradient in the current iterate is utilized, and its iterated update can be written as follows:
\begin{equation}\label{eq:ppm}
    (x^{k+1},y^{k+1})\leftarrow \arg\min_{x\geq 0}\max_y\;L(x,y)+\frac{1}{2\eta}\|x-x^k\|_2^2-\frac{1}{2\sigma}\|y-y^k\|_2^2\ .
\end{equation}
Unlike GDA, PPM exhibits nice convergence properties for solving primal-dual problems \cite{rockafellar1976monotone} (see Figure \ref{fig:pdhg-gda} for an example). However, its update rule is implicit and requires solving the subproblems arising in \eqref{eq:ppm}. This drawback makes PPM more of a conceptual rather than a practical algorithm.

To overcome these issues of GDA and PPM, we introduce primal-dual hybrid gradient method (PDHG, a.k.a Chambolle-Pock algorithm)~\cite{chambolle2011first,zhu2008efficient} and alternative direction method of multipliers. 
PDHG is a first-order method for convex-concave primal-dual problems originally motivated by applications in image processing. In the case of LP, the update rule is straightforward:
\begin{equation}\label{eq:pdhg}
    \begin{aligned}
        & x^{k+1}\leftarrow \text{proj}_{\mathbb R^n_+}(x^k+\eta A^\top y^k-\eta c) \\ 
        & y^{k+1}\leftarrow y^k-\sigma A(2x^{k+1}-x^k)+\sigma b\ ,
    \end{aligned}
\end{equation}
where $\eta$ is the primal step-size and $\sigma$ is the dual step-size. Similar to GDA, the algorithm alternates with the primal and the dual variables, and the difference is in the dual update, one utilizes the gradient at the extrapolated point $2x^{k+1}-x^k$. The extrapolation helps with the convergence of the algorithm, as we can see in Figure \ref{fig:pdhg-gda}. Indeed, one can show the PDHG is a preconditioned version of PPM (see the next section for more details), and thus share the nice convergence properties with PPM, but it does not require solving the implicit update~\ref{eq:ppm}. The computational bottleneck of PDHG is the matrix-vector multiplication (i.e., in $A^\top y$ and $Ax$). 

\begin{figure}[ht!]
	% \centering
    \hspace{0.7cm}
    \begin{subfigure}{0.5\textwidth}
        \includegraphics[width=1.0\textwidth]{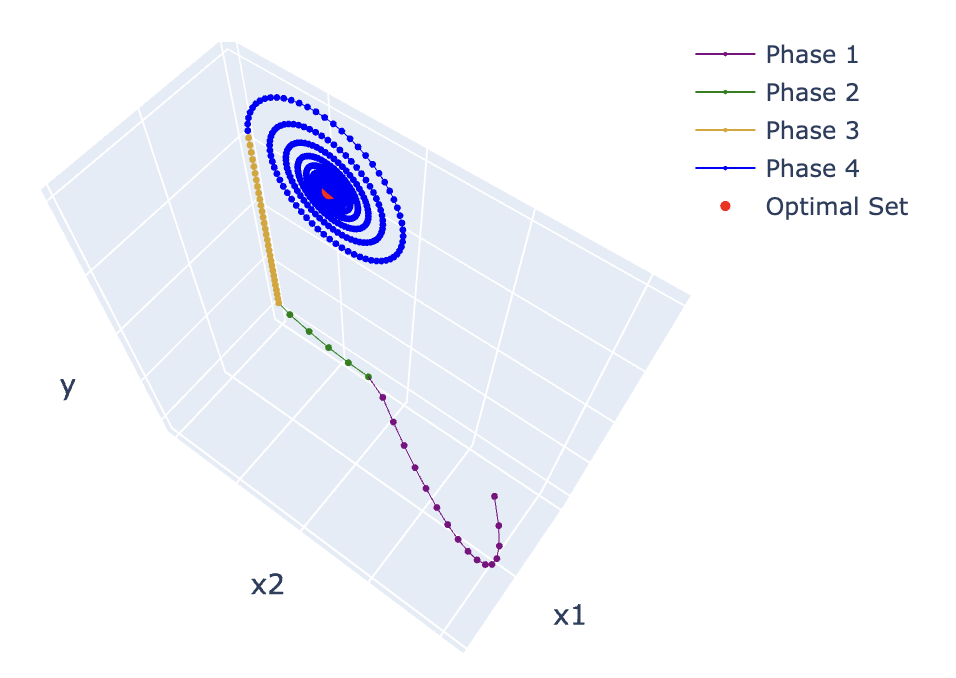}
        \caption{Primal-dual trajectory of PDHG for solving\\
            $\min\ 2x_1+3x_2\ \ $\\
        $\mathrm{s.t.}\ x_1+2x_2=1$\\
        $x_1\geq 0,x_2\geq 0$
        }
    \end{subfigure}
    \begin{subfigure}{0.45\textwidth}
        \includegraphics[width=1.0\textwidth]{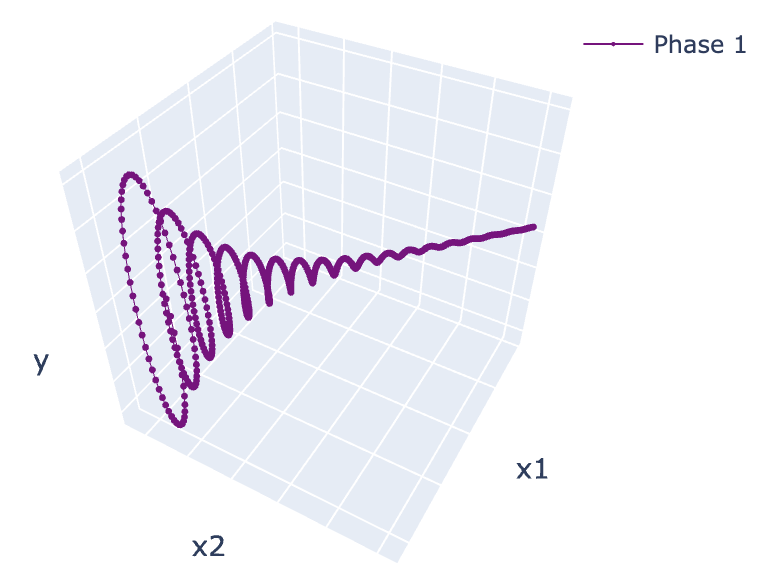}
        \caption{Primal-dual trajectory of PDHG for solving\\ 
        $\min\ 2x_1+3x_2\ \ $\\
        $\mathrm{s.t.}\ x_1+2x_2=1$\\
        $x_1\geq -10$}
    \end{subfigure}
	\caption{Primal-dual trajectory of PDHG for solving two toy LP instances.}
	\label{fig:pdhg-dynamics}
\end{figure}

At the end of this section, we discuss the geometry of PDHG iterates. Figure \ref{fig:pdhg-dynamics} plots the trajectory of PDHG iterates for two toy LP instances. Essentially, the dynamics of PDHG iterates can be partitioned into phases, where consecutive iterates with fixed active variables define each phase. For standard-form LP~\eqref{eq:primal}, active variables refer to non-zero primal variables.
To illustrate, the red point in Figure \ref{fig:pdhg-dynamics} (a) is the optimal solution to which the PDHG iterates are converging. Points with same color represents iterations within the same phase, i.e,, their active sets are unchanged: purple for Phase 1, green for Phase 2, yellow for Phase 3 and blue for Phase 4. Within each phase, the trajectory of PDHG follows a closed-form linear dynamical system that behaves like a spiral ray. More specifically, the dynamical system of PDHG within each phase has close-form solutions as:
\begin{equation*}
    z^{(k)}-z_v=W^k(z^{(0)}-z_v)+kv
\end{equation*}
where $z=(x,y)$ is the primal-dual solution pair, {$k$ is the number of steps in the phase}, $z_v$ is the spiral center (a fixed point of the iteration process), $W$ is a transformation matrix with some eigenvalues having modulus less than 1, leading to convergence in the rotation component, and $v$ is the ray direction, along which PDHG progresses towards optimality. The exact formula of the spiral center $z_v$, ray direction $v$ and transformation matrix $W$ has close-form solutions, as specified in~\cite{liu2024new}. Figure \ref{fig:pdhg-dynamics} (b) plots an LP instance with a long phase to demonstrate this spiral ray behavior. Indeed, every one of the four phases for the example in Figure \ref{fig:pdhg-dynamics} (a) corresponds to a short period of such a spiral ray behavior.

Particularly, this spiral ray dynamic consists of two orthogonal components~\cite{liu2024new}: 
\begin{itemize} 
\item Spiral-in rotation: The iterates spiral in towards a central point;
\item Forward Movement: The iterates advance along a specific ray direction.
\end{itemize}

It turns out that the spiral-in behavior improves primal and dual feasibility, and the forward movement direction improves optimality gap (see \cite{liu2024new} for a detailed explanation).

The phase transition occurs when the active set of the iterates changes, i.e., when an active variable hits the variable bound or when an inactive variable becomes active (see Figure \ref{fig:pdhg-dynamics} (a) for an example where four phases appear in the dynamic). This reveals a characteristic of the dynamics in the iterative updates of PDHG: it keeps following a spiral ray direction until the active set changes, which leads to another spiral ray direction. Such spiral ray behavior is common for first-order primal-dual algorithms to solve LP, which distinguishes them from classical LP methods such as the simplex and interior point methods.

\section{PDLP: a First-Order Primal-Dual Method for LP}\label{sec:pdlp}
PDLP is a recently developed, general-purpose solver for LP, with several implementation variants. In contrast to most classical LP solvers, which are based on the simplex or interior-point methods, PDLP is built on the primal-dual hybrid gradient (PDHG) method, a first-order primal-dual algorithm, augmented with a range of practical enhancements to improve performance. Although FOMs are traditionally viewed as incapable of delivering high-accuracy solutions and thus unsuitable for general-purpose LP solving, PDLP has shown strong empirical performance. In particular, its GPU-based variants show promising results, positioning PDLP as a potential “new horse in the race” among state-of-the-art LP solvers.

Since the introduction of PDLP, significant research progress has been made along this line. This section begins with an overview of various implementations and variants of PDLP in Section \ref{sec:variant}. The base algorithm and its practical enhancements are then detailed in Sections \ref{sec:base} and \ref{sec:heuristic}, respectively. Comprehensive numerical studies are presented in Section \ref{sec:numerical}, followed by applications in Section \ref{sec:application} and a discussion in Section \ref{sec:discuss}.

\subsection{Implementations and Variants of PDLP}\label{sec:variant}
In 2021, the first version of PDLP was introduced as a CPU-based Julia package \href{https://github.com/google-research/FirstOrderLp.jl}{FirstOrderLp.jl}~\cite{applegate2021practical} for research purposes. Subsequently, a multi-threading CPU-based C++ implementation of \href{https://github.com/google/or-tools/tree/stable/ortools/pdlp}{PDLP} was open-sourced through \href{https://github.com/google/or-tools/tree/stable}{Google OR-Tools} in 2022~\cite{applegate2025pdlp}. A GPU-optimized variant of PDLP, called \href{https://github.com/jinwen-yang/cuPDLP.jl}{cuPDLP.jl}~\cite{lu2023cupdlp} was released in November 2023, which demonstrates strong numerical performance by taking advantages of GPUs, and triggered a rapid adaption of this methodology in optimization solver industry. In December 2023, \href{https://www.copt.de/}{COPT} published an open-sourced GPU-based C re-implementation of cuPDLP.jl named \href{https://github.com/COPT-Public/cuPDLP-C}{cuPDLP-C}~\cite{lu2023cupdlpc}. In Feburary 2024, \href{https://www.copt.de/}{COPT} integrated cuPDLP-C to its commercial side. In March 2024, \href{https://github.com/ERGO-Code/HiGHS}{HiGHS} incorporated cuPDLP-C, and \href{https://www.nvidia.com/en-us/ai-data-science/products/cuopt/}{NVIDIA cuOpt} suite introduced their \href{https://developer.nvidia.com/blog/accelerate-large-linear-programming-problems-with-nvidia-cuopt/}{GPU implementation} of PDLP. In April 2024,  \href{https://www.fico.com/en/products/fico-xpress-optimization}{FICO Xpress} integrated PDLP into the solver, and \href{https://www.gurobi.com/}{Gurobi} announced to integrate PDLP in a future version in October 2024. In December 2024, a new implementation in JAX, called \href{https://github.com/MIT-Lu-Lab/MPAX}{MPAX}~\cite{lu2024mpax}, was introduced for deep learning applications, which supports auto-differentiation, batch solving and multiple GPUs. In March 2025, NVIDIA announced to  open-source their PDLP implementation.

These implementations, while based on the PDHG algorithm for LP, incorporate different enhancements that diverge from the original FirstOrderLp.jl implementation. For instance, Google OR-Tools employs a different standard form of LP compared to FirstOrderLp.jl, cuPDLP.jl introduces an alternative restarting scheme designed to optimize performance on GPUs, while MPAX implements a reflected Halpern variant~\cite{lu2024restarted} of the algorithm, etc. The details of industrial-grade implementations, however, are often proprietary, making it unclear which specific algorithmic variations they utilize. For the sake of clarity and alignment with GPU-based implementations, the rest of this section focuses on the algorithms, enhancements, and results presented in FirstOrderLp.jl,  cuPDLP.jl and MPAX, while acknowledging the details in other implementations.

\subsection{Base Algorithm}\label{sec:base}
Different from the standard form LP \eqref{eq:primal}, PDLP takes the following general LP form as input 
\begin{equation}\label{eq:lp-general}
    \begin{aligned}[c]
    \min_{x}~~ &~ c^\top x \\
\text{s.t.}~~ &~  Ax = b\ , \  \ Gx \geq h \\
&~ l \leq x \leq u\  ,
    \end{aligned}
\end{equation} 
where $G \in \mathbb R^{m_1\times n}$, $A \in \mathbb R^{m_2 \times n}$, $c \in \mathbb R^{n}$, $h \in \mathbb R^{m_1}$, $b \in \mathbb R^{m_2}$, $l \in (\mathbb R \cup \{ -\infty \})^{n}$, $u \in (\mathbb R \cup \{ \infty \})^{n}$. While \eqref{eq:lp-general} can, in principle, be converted into \eqref{eq:primal} by introducing auxiliary variables, such transformations not only increase the problem's size but may also degrade algorithmic convergence.
This general form is utilized in FirstOrderLp.jl, cuPDLP.jl, cuPDLP-C, and MPAX. However, the Google OR-Tools implementation employs a different standard LP formulation.

PDLP solves this LP by addressing its primal-dual form:
\begin{flalign}\label{eq:primal-dual}
\min_{x \in X} \max_{y \in Y}\ L(x,y) := c^\top x - y^\top K x + q^\top y \ ,
\end{flalign}
where $K^\top = \begin{pmatrix} G^\top, A^\top \end{pmatrix}$ and $q^\top := \begin{pmatrix}
h^\top,
b^\top
\end{pmatrix}$, $X := \{x \in \mathbb R^n : l \leq x \leq u \}$, and $Y := \{y \in \mathbb R^{m_1+m_2} : y_{1:m_1} \geq 0\}.$ By duality theory of convex optimization, we know that a saddle point solution $(x^*,y^*)$ to \eqref{eq:primal-dual} recovers an optimal primal-dual solution pair to \eqref{eq:lp-general}.

PDLP is an enhanced Primal-Dual Hybrid Gradient (PDHG) method for LP. Let $z=(x,y)$ represent the primal-dual pair, and let $z^{k+1}=\mathrm{PDHG}(z^k)$ denote a single PDHG iteration, defined by the following update rule:
\begin{equation}\label{eq:pdhg-general} 
    \begin{aligned}
        & x^{k+1}\leftarrow \text{proj}_{X}(x^k-\eta (c-K^\top y^k)) \\ & y^{k+1}\leftarrow \text{proj}_{Y}(y^k+\sigma (q-K(2x^{k+1}-x^k)))\ ,
    \end{aligned}
\end{equation}
where $\eta$ is the primal step-size and $\sigma$ is the dual step-size. The primal and the dual step-sizes are reparameterized in PDLP as
\begin{equation*}
    \tau = \eta/\omega,\; \sigma=\eta\omega\quad \text{with}\;\  \eta,\omega>0\ ,
\end{equation*}
where $\eta$ (called step-size) controls the scale of the step-sizes, and $\omega$ (called primal weight) balances the primal and the dual progress. 

Notice that the projection step onto $X$ and $Y$ is straightforward as $X$ and $Y$ are simple box constraints. An attractive feature of PDHG on LP is factorization-free, with its primary computational bottleneck being matrix-vector multiplications, $Kx$ and $K^\top y$.

\subsection{Algorithmic Enhancements}\label{sec:heuristic}
The numerical performance of vanilla PDHG~\eqref{eq:pdhg-general} for LP is insufficient to serve as the backbone of a modern solver. For example, vanilla PDHG can solve only 113 instances to a relative accuracy of $10^{-4}$ and 48 instances to 
$10^{-8}$  among the 383 MIPLIB benchmark instances within 100,000 iterations~\cite{applegate2021practical}. In contrast, the enhanced PDHG implementation in FirstOrderLp.jl significantly improves performance, solving 371 instances to $10^{-4}$ relative accuracy and 334 instances to $10^{-8}$ under identical conditions~\cite{applegate2021practical}. This demonstrates the importance of these enhancements, which are essential to enable PDHG to function as a mature and robust general-purpose solver. Building upon the initial enhancements introduced in FirstOrderLp.jl, further modifications have been proposed and implemented in Google OR-Tools implementation, cuPDLP.jl and MPAX. Below, we outline the key enhancements and their high-level intuitions incorporated into various PDLP variants. For a detailed explanation, we refer to the readers corresponding literature and codebase mentioned below.

\begin{itemize}
    \item {\bf Preconditioning.} Unlike second-order methods, the numerical performance of first-order methods, such as PDHG and PDLP, is highly influenced by the conditioning of the underlying problem. To address the potential ill-conditioning of the LP instance, PDLP incorporates a diagonal preconditioner to improve the condition number of the original problem. This involves rescaling the constraint matrix $K=(G,A)$ to $\tilde K=(\tilde G,\tilde A)=D_1KD_2$ where $D_1$ and $D_2$ are positive diagonal matrices. This rescaling ensures that the resulting matrix $\tilde{K}$ is "well-balanced," enhancing numerical stability.

    As a result, the preconditioning step produces a modified LP instance, where $A,G,c,b,h,u$ and $l$ in \eqref{eq:lp-general} are transformed into $\tilde G,\tilde A,\hat x=D_2^{-1}x,\tilde c=D_2c,(\tilde b,\tilde h)=D_1(b,h), \tilde u=D_2^{-1}u$ and $\tilde l=D_2^{-1}l$, respectively. In its default configuration, PDLP employs a combination of Ruiz rescaling~\cite{ruiz2001scaling} and the diagonal preconditioning technique proposed by Pock and Chambolle~\cite{pock2011diagonal}. A more detailed description and comparison among different preconditioning schemes can be found in \cite{applegate2021practical}.

    \item {\bf Average, Halpern and reflection.} In PDHG algorithm, it is known that the average iterate, i.e.,
    $$\bar{z}^k=\frac{1}{k} \sum_{i=1}^k z^i$$
    has a faster sublinear convergence rate than the last iteration of PDHG (see Section \ref{sec:unified} for more details).  Implementations such as FirstOrderLp.jl, OR-Tools, and cuPDLP.jl incorporate the average iterate into their update rules, using a weighted average with step-size as the weight, sometimes in combination with the last iterate. This approach is described in detail in \cite{applegate2021practical}.
    
    The Halpern scheme and reflection are recent enhancements proposed and studied in \cite{lu2024restarted, chen2024hpr}, and implemented in MPAX. Halpern method~\cite{halpern1967fixed,lieder2021convergence,diakonikolas2020halpern,park2022exact,kim2021accelerated} is a scheme originally designed to accelerate general operator splitting methods. Halpern PDHG anchors to the initial solution, and takes a weighted average between the PDHG step at the current iterate and the initial point. When applying to LP, the update rule for Halpern PDHG is given by: 
    \begin{equation}\label{eq:hpdhg}
        z^{k+1}%=\text{H-PDHG}(z^k;z^0):
        =\frac{k+1}{k+2}\mathrm{PDHG}(z^k)+\frac{1}{k+2}z^0 \ .
    \end{equation}
    where $\mathrm{PDHG}(z^k)$ represents a single PDHG iteration at current solution $z^k$, as defined in \eqref{eq:pdhg-general}. Halpern PDHG bears some resemblance to the use of averaged PDHG iterates and achieves the same improved convergence guarantees as the ergodic rate for PDHG (see \cite{lu2024restarted} for a detailed comparison), in contrast to the convergence guarantees for its last iteration. A key numerical advantage of Halpern iterates is that they eliminate the need to track and update the average iterate. Theoretically, Halpern iterates offer numerous benefits, including an accelerated linear convergence rate for infeasibility detection and enhanced two-stage convergence behavior—features that are difficult to achieve with averaged iterates (see \cite{lu2024restarted} for further details).

    Reflection~\cite{bauschke2019convex,ryu2022large} further enhances the Halpern PDHG method. The extension is straightforward: instead of applying the Halpern iteration directly to vanilla PDHG operator $\mathrm{PDHG}(\cdot)$, it is applied to its reflection $2\mathrm{PDHG}(\cdot)-I$. The update rule for Reflected Halpern PDHG is:
    \begin{equation}\label{eq:rhpdhg}
        z^{k+1}%=\text{H-PDHG}(z^k;z^0):
        =\frac{k+1}{k+2}\bigg(2\mathrm{PDHG}(z^k)-z^k\bigg)+\frac{1}{k+2}z^0 \ .
    \end{equation}
    
    This use of reflection effectively takes a longer step compared to vanilla Halpern PDHG, leading to faster convergence both theoretically and empirically. In fact, theoretical analysis shows that the reflection approach provides an improvement by a factor of 2 (see Section \ref{sec:unified} for details), which is corroborated by superior empirical performance.
    
    \item {\bf Restart.} Restarting is a key enhancement in PDLP for achieving high-accuracy solutions. In the average scheme, the algorithm restarts from the average iterate when a predefined condition is met. The underlying intuition is that after completing a spiral of PDHG iterates, the average of the iterates is typically close to the stationary point, making it advantageous to restart from this average solution (considering the example in Figure \ref{fig:pdhg-gda}).
    In the Halpern scheme, restarting involves resetting the anchor point to the current solution rather than the initial solution. The intuition here is that as the algorithm converges toward the optimal solution, continuing to reference a distant initial anchor point becomes counterproductive. Resetting the anchor point ensures the algorithm focuses on the vicinity of the optimal solution.
Restarting accelerates the theoretical convergence rate for both the average scheme and the Halpern scheme (see Section \ref{sec:optimal} for details).
    
    A key component of the restart mechanism is determining the appropriate time to restart. PDLP adopts an adaptive restarting strategy, where a restart candidate is evaluated at each iteration. This involves assessing various restart criteria to identify whether a constant-factor decay in a specific progress metric has occurred. If such decay is detected, a restart is triggered. Detailed discussion of this strategy can be found in~\cite{applegate2023faster,lu2023cupdlp}.
     
    Notably, different implementations of PDLP employ distinct progress metrics for determining restarts. For example, FirstOrderLp.jl and the Google OR-Tools implementation use a metric known as the normalized duality gap at the average iterates~\cite{applegate2021practical,applegate2023faster}, which is evaluated via a trust-region solver. However, this trust-region solver operates sequentially, making it unsuitable for GPUs. To address this, the GPU-based cuPDLP.jl introduces a new restart scheme leveraging the KKT error as the progress metric, ensuring compatibility with GPU architectures. Conversely, the MPAX implementation uses fixed-point residual at the current Halpern iterates, as this metric aligns naturally with the Halpern scheme~\cite{lu2024restarted}.

    \item {\bf Adaptive step-size.}  The step-size suggested by theoretical considerations, specifically $1/\|A\|_2$, often proves overly conservative in practical applications. To address this, PDLP incorporates a heuristic line search to adaptively determine a suitable step-size that satisfies the condition:
    \begin{equation}\label{eq:step-size}
            \eta\leq \frac{\|z^{k+1}-z^k\|_{\omega}^2}{2(y^{k+1}-y^k)^\top K(x^{k+1}-x^k)}\ ,
    \end{equation}
    where $\|z\|_{\omega}:=\sqrt{\omega\|x\|_2^2+\frac{\|y\|_2^2}{\omega}}$ and $\omega$ represents the current primal weight. Additional details regarding this adaptive step-size rule can be found in~\cite{applegate2021practical}. The step-size rule \eqref{eq:step-size} is inspired by keeping the matrix $P$ (defined later in \eqref{eq:generic}) along the two consecutive iterates direction positive semidefinite. Empirical evidence from numerical experiments in~\cite{applegate2021practical} demonstrates the consistent efficacy of this approach in improving practical performance compared to other established step-size rules, despite breaking the theoretical convergence guarantees.
    
    \item {\bf Primal weight.} Adjusting the primal weight $\omega$ is intended to balance the primal and dual spaces through a heuristic approach. This update occurs infrequently, typically during restart events. In PDLP, the primal weight is updated at the beginning of each new epoch. The rationale is to determine the primal weight $\omega^n$ such that it approximately equalizes the distance to optimality in both the primal and dual domains, i.e., $\|(x^{n,k}-x^*,0)\|_{\omega^n}\approx \|(0,y^{n,k}-y^*)\|_{\omega^n}$. 
    To further stabilize the updates and reduce oscillations, PDLP applies exponential smoothing with a parameter $\theta \in [0,1]$. Additional details about this adaptive weight adjustment strategy can be found in~\cite{applegate2021practical}. However, our numerical experiences indicate that primal weight is a crucial hyper-parameter for the performance of the algorithm, and the proposed adaptive primal weight rule is not always reliable as expected. Per-instance tuning of the primal weight has the potential to yield significant performance enhancements.

    \item {\bf Feasibility polishing.} In certain LP applications, approximately optimal solutions that are feasible are often sufficient. Feasibility polishing is a recent heuristic designed to enable first-order methods to find solutions with minimal feasibility violations and moderate duality gaps~\cite{applegate2025pdlp}. This approach involves applying PDLP to solve the primal/dual feasibility problems, initialized from a near-optimal solution. The key insights underlying feasibility polishing are: (i) PDLP converges faster for feasibility problems than for optimality problems, and (ii) when provided with a good initial solution, PDLP reliably converges to a nearby optimal solution.

    \item {\bf Infeasibility detection.}
    The ability to detect infeasibility/unboundedness in LP instances is a critical feature of LP solvers. In practice, PDLP employs periodic checks to determine whether the difference between iterates, $z^{k+1} - z^k$, or the normalized iterates, $\frac{1}{k}(z^k - z^0)$, can provide infeasibility certificates. For a more detailed discussion on the methods and efficacy of infeasibility detection in PDHG, we refer readers to Section \ref{sec:infeas}.
   
\end{itemize}

\subsection{Numerical Performances}\label{sec:numerical}

In this section, we summarize the numerical experiments of cuPDLP.jl and $\mathrm{r^2HPDHG}$ that were presented in \cite{lu2023cupdlp} and \cite{lu2024restarted} to illustrate the numerical performance of GPU-based LP algorithms compared to CPU-based solvers. In particular, we present comparisons of GPU-based cuPDLP.jl and $\mathrm{r^2HPDHG}$ with the CPU implementations of PDLP~\cite{applegate2021practical} and three methods implemented in the commercial solver Gurobi, i.e., primal simplex method, dual simplex method, and interior-point method. Section \ref{sec:setup} describes specific experiment setup. The numerical results compared with Gurobi and CPU-based PDLP are presented in Section \ref{sec:result-gurobi} and Section \ref{sec:result-pdlp} respectively.

\subsubsection{Experiment Setup}\label{sec:setup}

{\bf Benchmark datasets.} The \texttt{MIP Relaxations} benchmark dataset is curated from the MIPLIB 2017 collection~\cite{gleixner2021miplib}, a well-established repository of mixed-integer linear programming problems. For benchmarking purposes, the root-node LP relaxation of instances from MIPLIB 2017 is extracted to form the LP benchmark set. A total of 383 instances are selected based on specific criteria, following a similar selection process used in the experiments of CPU-based PDLP~\cite{applegate2021practical}:
\begin{itemize}
    \item Not tagged as numerically unstable
    \item Not tagged as infeasible
    \item Not tagged as having indicator constraints
    \item Finite optimal objective (if known)
    \item The constraint matrix has a number of nonzeros greater than 100,000
    \item Zero is not an optimal solution to the LP relaxation.
\end{itemize}
\texttt{MIP Relaxations} is further split into three classes based on the number of nonzeros (nnz) in the constraint matrix\footnote{{In the comparison, we only include LP instances with at least 100K nonzeros, as for small problems factorization is relatively cheap and there is no need to use FOMs in such cases.}}, as shown in Table \ref{tab:miplib-size}.
\begin{table}[ht!]
\centering
\begin{tabular}{cccc}
\hline
                    & \textbf{Small}                   & \textbf{Medium}                     & \textbf{Large}                   \\ \hline
\textbf{Number of nonzeros}  & 100K -  1M & 1M - 10M & \textgreater 10M \\
\textbf{Number of instances} & 269                     & 94                         & 20                      \\ \hline
\end{tabular}
\caption{Scales of instances in \texttt{MIP Relaxations}.}
\label{tab:miplib-size}
\end{table}

{\bf Software.} The \href{https://github.com/jinwen-yang/cuPDLP.jl}{cuPDLP.jl} and $\mathrm{r^2HPDHG}$ solvers are implemented as Julia~\cite{bezanson2017julia} modules, utilizing \href{https://github.com/JuliaGPU/CUDA.jl}{CUDA.jl}~\cite{besard2018effective} as the interface for executing computations on NVIDIA CUDA GPUs. To evaluate their performances, cuPDLP.jl and $\mathrm{r^2HPDHG}$ are compared against five LP solvers: the Julia implementation of PDLP (\href{https://github.com/google-research/FirstOrderLp.jl}{FirstOrderLp.jl}), the C++ implementation of PDLP with both single-threaded and multi-threaded configurations (available in \href{https://github.com/google/or-tools}{Google OR-Tools}), and Gurobi’s implementations of the primal simplex, dual simplex, and barrier methods. Crossover is disabled in the Gurobi experiments, and all Gurobi runs are conducted using 16 threads. To ensure a fair comparison, the running time of cuPDLP.jl, $\mathrm{r^2HPDHG}$ and FirstOrderLp.jl is measured after pre-compilation in Julia.

{\bf Computing environment.} The experiments are conducted on an NVIDIA H100-PCIe-80GB GPU with CUDA 12.3 for running cuPDLP.jl and $\mathrm{r^2HPDHG}$, and an Intel Xeon Gold 6248R CPU (3.00 GHz) with 160GB RAM and 16 threads for executing CPU-based solvers. The software environment includes Julia 1.9.2 and Gurobi 11.0, the latter of which was released in November 2023. Table \ref{tab:flop} presents a comparison of the theoretical peak double-precision FLOPS (floating-point operations per second) for the CPU and GPU used in the experiments\footnote{{While we utilized the best CPUs available to us in these experiments, we acknowledge that CPUs are substantially less expensive than NVIDIA H100 GPUs. It is also possible that a higher-end CPU could further improve the performance of CPU-based solvers.}}.

\begin{table}[ht!]
\centering
{\small
\begin{tabular}{ccc}
\hline
                                      & \textbf{CPU (16 threads)}      & \textbf{GPU}                \\ \hline
\textbf{Processor}                    & Intel Xeon Gold 6248R CPU 3.00GHz\tablefootnote{See \href{https://ark.intel.com/content/www/us/en/ark/products/199351/intel-xeon-gold-6248r-processor-35-75m-cache-3-00-ghz.html}{processor specifications}.} & NVIDIA H100-PCIe\tablefootnote{H100 has 114 SMs and 7296 FP64 cores. See \href{https://resources.nvidia.com/en-us-tensor-core/nvidia-tensor-core-gpu-datasheet}{H100 datasheet} and \href{https://resources.nvidia.com/en-us-tensor-core/gtc22-whitepaper-hopper}{H100 whitepaper} for more a detailed description of H100 GPU. } \\
\textbf{Theoretical peak (FP64)} & 256 GFLOPS                         & 26 TFLOPS \\ 
\textbf{Maximum memory bandwidth} & 137.48 GB/sec  & 2 TB/sec                 \\ \hline
\end{tabular}
}
\caption{Comparison of CPU and GPU specifications.}
\label{tab:flop}
\end{table}

{\bf Initialization.} PDLP, cuPDLP.jl and $\mathrm{r^2HPDHG}$ use all-zero vectors as the initial starting points.

{\bf Optimality termination criteria.} PDLP, cuPDLP.jl and $\mathrm{r^2HPDHG}$ terminate when the relative KKT error is no greater than the termination tolerance $\epsilon\in(0,\infty)$:
\begin{equation*}
    \begin{aligned}
        |q^\top y+l^\top \lambda^+-u^\top \lambda^--c^\top x|&\leq \epsilon(1+|q^\top y+l^\top\lambda^+-u^\top \lambda^-|+|c^\top x|)\\
        \left\|\begin{pmatrix}
            Ax-b \\ [h-Gx]^+
        \end{pmatrix}\right\|_2&\leq \epsilon(1+\|q\|_2)\\
        \|c-K^\top y-\lambda\|_2&\leq \epsilon(1+\|c\|_2) \ .
    \end{aligned}
\end{equation*}
The termination criteria are evaluated based on the original LP instance rather than the preconditioned version, ensuring that the stopping conditions remain unaffected by the preconditioning process.
$\epsilon=10^{-4}$ is used for moderately accurate solutions and $\epsilon=10^{-8}$ for high-quality solutions. Parameters \texttt{FeasibilityTol}, \texttt{OptimalityTol} and \texttt{BarConvTol} (for barrier methods) of Gurobi are also set to $10^{-4}$ and $10^{-8}$ tolerances\footnote{It is important to note that Gurobi barrier, Gurobi simplex, and PDLP employ different termination criteria. A key distinction is that PDLP uses a relative tolerance, while Gurobi applies an absolute tolerance in its termination test, resulting in a stricter stopping condition. Despite this difference, empirical observations indicate that PDLP with high-accuracy settings ($10^{-8}$ relative tolerance) consistently produces high-quality solutions.}.

{\bf Time limit.} A time limit of 3600 seconds is imposed on instances with small-sized and medium-sized instances. A time limit of 18000 seconds is imposed for large instances.

{\bf Shifted geometric mean.} Shifted geometric mean of solve time is reported to measure the performance of solvers on a certain collection of problems. More precisely, shifted geometric mean is defined as $\left(\prod_{i=1}^n (t_i+\Delta)\right)^{1/n}-\Delta$ where $t_i$ is the solve time for the $i$-th instance. The shift is set to $\Delta=10$ and denoted as SGM10. If the instance is unsolved, the solve time is always set to the corresponding time limit. 

{\bf Presolve.} 
The comparison with Gurobi is conducted on two sets of instances, namely, the original instances without any presolve step, as well as the instances after Gurobi presolve step.

\subsubsection{Comparison with Gurobi}\label{sec:result-gurobi}

Table \ref{tab:miplib-1e-4-no-presolve} - \ref{tab:miplib-1e-8-with-presolve} present a comparison between cuPDLP.jl, $\mathrm{r^2HDPHG}$ and the commercial LP solver Gurobi. Particularly, Table \ref{tab:miplib-1e-4-no-presolve} and Table \ref{tab:miplib-1e-4-with-presolve} present moderate accuracy results (i.e., $10^{-4}$ relative KKT error), while Table \ref{tab:miplib-1e-8-no-presolve} and Table \ref{tab:miplib-1e-8-with-presolve} present the high accuracy results (i.e., $10^{-8}$ relative KKT error); Table \ref{tab:miplib-1e-4-no-presolve} and Table \ref{tab:miplib-1e-8-no-presolve} present the results on solving the original problems, while Table \ref{tab:miplib-1e-4-with-presolve} and Table \ref{tab:miplib-1e-8-with-presolve} present the results on LP instances after Gurobi presolve. 

Presolve refers to the process of simplifying an optimization problem before applying the main solver, typically by removing redundancies, tightening variable bounds, and identifying infeasibilities or fixed variables. There has been an ongoing debate about whether comparing algorithm performance with and without presolve is more appropriate. On one hand, presolve can substantially reduce problem size and mitigate ill-conditioning, benefiting all classes of algorithms. On the other hand, presolve often involves considerable engineering effort and is generally treated as a black-box process. In commercial solvers, presolve routines are primarily designed to enhance the performance of interior-point and simplex methods, and the majority of performance gains often stem from these traditional algorithms. Designing effective presolve schemes tailored for FOMs remains an open research question.
From an algorithmic comparison standpoint, disabling presolve may lead to fairer evaluations by isolating the core algorithmic behavior. However, from a standpoint of solver comparison, presolve is critical and can lead to substantial runtime improvements.  To provide a comprehensive view, we here report experimental results both with and without presolve enabled.

The tables yield several noteworthy observations:
\begin{itemize}
    \item With Gurobi presolve, cuPDLP.jl can solve 99.5\% instances to medium accuracy and 97.4\% instances to high accuracy within the time limit, demonstrating its reliability for solving real-world LP. Furthermore, $\mathrm{r^2HPDHG}$ can solve 99.0\% instances to medium accuracy and 96.6\% instances to high accuracy.
    \item In the case of moderate accuracy ($\epsilon=10^{-4}$), cuPDLP.jl and $\mathrm{r^2HPDHG}$ exhibit strong performance in terms of solved count and solve time, regardless of whether to use presolve. 
    For medium-sized and large-sized instances without presolve, $\mathrm{r^2HPDHG}$ establishes an advantage over Gurobi, achieving a 5.7x speed-up on medium problems with 5 more instances solved and a 4.6x speed-up on large instances with one additional solved instance, respectively. With Gurobi presolve, cuPDLP.jl is able to solve 381 out of 383 instances and the solve time is comparable to the best of the three Gurobi methods (i.e., barrier methods).
    \item In the case of high accuracy ($\epsilon=10^{-8}$), cuPDLP.jl and $\mathrm{r^2HPDHG}$ have strong performance, compared to Gurobi primal and dual simplex method, though it is inferior to the Gurobi barrier method. 
    \item Gurobi presolve can improve the performance of all Gurobi methods as well as the performance of cuPDLP.jl/$\mathrm{r^2HPDHG}$. The effect of presolve is more significant for Gurobi methods. This is expected because Gurobi presolve is designed to speed up Gurobi methods.
\end{itemize}
To summarize, these observations affirm that both cuPDLP.jl and $\mathrm{r^2HPDHG}$ attain strong performance in \texttt{MIP Relaxations} benchmark dataset. This demonstrates that a first-order-method-based LP solver on GPU can be a ``new horse in the race'' with strong implementations of simplex and barrier methods, even in obtaining high-accuracy solutions.

\begin{table}[h!]
\centering
{\small
\begin{tabular}{ccccccccc}
\hline
\multirow{2}{*}{}                                   & \multicolumn{2}{c}{\begin{tabular}[c]{@{}c@{}}\textbf{Small (269)} \\ (1-hour limit)\end{tabular}} & \multicolumn{2}{c}{\begin{tabular}[c]{@{}c@{}}\textbf{Medium (94)}\\ (1-hour limit)\end{tabular}}    & \multicolumn{2}{c}{\begin{tabular}[c]{@{}c@{}}\textbf{\textbf{Large (20)}}\\ (5-hour limit)\end{tabular}}  & \multicolumn{2}{c}{\textbf{Total (383)}}       \\
                                                    & \textbf{Count} & \textbf{Time} & \textbf{Count} & \textbf{Time} & \textbf{Count} & \textbf{Time} & \textbf{Count} & \textbf{Time} \\ \hline
\multicolumn{1}{c}{\textbf{cuPDLP.jl}}     & 266                   & 8.61               & 92                    & 14.80               & \cellcolor{LightCyan}19                    & 111.19 &377 &12.02             \\
\multicolumn{1}{c}{{{$\mathrm{\bf r^2HPDHG}$}}}     & {267}                   & {6.61}               & \cellcolor{LightCyan}{93}                    & \cellcolor{LightCyan}{7.84}              & \cellcolor{LightCyan}{19}                    & \cellcolor{LightCyan}{90.81} & \cellcolor{LightCyan}379 & \cellcolor{LightCyan}8.58\\
\multicolumn{1}{c}{\textbf{Primal simplex (Gurobi)}} & \cellcolor{LightCyan}268                   & 12.56                & 69                    & 188.81              & 11                    & 3145.49   &348 &39.81           \\
\multicolumn{1}{c}{\textbf{Dual simplex (Gurobi)}}   & \cellcolor{LightCyan}268                   & 8.75                & 84                    & 66.67               & 15                    & 591.63    &367 &21.75           \\
\multicolumn{1}{c}{\textbf{Barrier (Gurobi)}}        & \cellcolor{LightCyan}268                   & \cellcolor{LightCyan}5.30                & 88                    & 45.01               & 18                    & 415.78    &374 &14.92           \\ \hline
\end{tabular}
}
\caption{Solve time in seconds and SGM10 of different solvers on instances of \texttt{MIP Relaxations} with tolerance $10^{-4}$: cuPDLP.jl/$\mathrm{r^2HPDHG}$ versus Gurobi without presolve.}
\label{tab:miplib-1e-4-no-presolve}
\end{table}

\begin{table}[h!]
\centering
{\small
\begin{tabular}{ccccccccc}
\hline
\multirow{2}{*}{}                                   & \multicolumn{2}{c}{\begin{tabular}[c]{@{}c@{}}\textbf{Small (269)} \\ (1-hour limit)\end{tabular}} & \multicolumn{2}{c}{\begin{tabular}[c]{@{}c@{}}\textbf{Medium (94)}\\ (1-hour limit)\end{tabular}}    & \multicolumn{2}{c}{\begin{tabular}[c]{@{}c@{}}\textbf{\textbf{Large (20)}}\\ (5-hour limit)\end{tabular}} & \multicolumn{2}{c}{\textbf{Total (383)}}          \\
                                                    & \textbf{Count} & \textbf{Time}  & \textbf{Count} & \textbf{Time}  & \textbf{Count} & \textbf{Time} & \textbf{Count} & \textbf{Time}  \\ \hline
\multicolumn{1}{c}{\textbf{cuPDLP.jl}}     & 261                   & 23.47                & 86                    & 40.69                & 16                    & 421.40  &363 &32.35             \\
\multicolumn{1}{c}{{{$\mathrm{\bf r^2HPDHG}$}}}     & {260}                   & {19.13}               & {87}                    & \cellcolor{LightCyan}{28.35}              & {16}                    & \cellcolor{LightCyan}{229.47} & 363 & 24.79  \\
\multicolumn{1}{c}{\textbf{Primal simplex (Gurobi)}} & \cellcolor{LightCyan}268                   & 12.43                 & 74                    & 157.59               & 13                    & 2180.23        &355 &36.68      \\
\multicolumn{1}{c}{\textbf{Dual simplex (Gurobi)}}   & \cellcolor{LightCyan}268                   & 8.00                 & 83                    & 59.93                & 15                    & 687.17       &366 &20.40      \\
\multicolumn{1}{c}{\textbf{Barrier (Gurobi)}}        & 267                   & \cellcolor{LightCyan}6.24                 & \cellcolor{LightCyan}88                    & 48.62                & \cellcolor{LightCyan}18                    & 438.69      &\cellcolor{LightCyan}373 &\cellcolor{LightCyan}16.46        \\ \hline
\multicolumn{1}{l}{}                                & \multicolumn{1}{l}{}  & \multicolumn{1}{l}{} & \multicolumn{1}{l}{}  & \multicolumn{1}{l}{} & \multicolumn{1}{l}{}  & \multicolumn{1}{l}{}
\end{tabular}
}
\caption{Solve time in seconds and SGM10 of different solvers on instances of \texttt{MIP Relaxations} with tolerance $10^{-8}$: cuPDLP.jl/$\mathrm{r^2HPDHG}$ versus Gurobi without presolve.\tablefootnote{Gurobi primal and dual simplex methods may perform better for obtaining high-accuracy solution than medium-accuracy solution. This is a \href{https://support.gurobi.com/hc/en-us/community/posts/21790169951249-Simplex-method-does-not-stop-at-optimal-solution}{known effect} due to the different trajectory paths when setting different tolerance levels.}}
\label{tab:miplib-1e-8-no-presolve}
\end{table}

\begin{table}[h!]
\centering
{\small
\begin{tabular}{ccccccccc}
\hline
\multirow{2}{*}{}                                   & \multicolumn{2}{c}{\begin{tabular}[c]{@{}c@{}}\textbf{Small (269)} \\ (1-hour limit)\end{tabular}} & \multicolumn{2}{c}{\begin{tabular}[c]{@{}c@{}}\textbf{Medium (94)}\\ (1-hour limit)\end{tabular}}    & \multicolumn{2}{c}{\begin{tabular}[c]{@{}c@{}}\textbf{\textbf{Large (20)}}\\ (5-hour limit)\end{tabular}}  & \multicolumn{2}{c}{\textbf{Total (383)}}       \\
                                                    & \textbf{Count} & \textbf{Time} & \textbf{Count} & \textbf{Time} & \textbf{Count} & \textbf{Time} & \textbf{Count} & \textbf{Time} \\ \hline
\multicolumn{1}{c}{\textbf{cuPDLP.jl}}     & \cellcolor{LightCyan}269                   & 5.35               & 93                    & 10.31               & 19                    & 33.93 &381 &7.37             \\
\multicolumn{1}{c}{{$\mathrm{\bf r^2HPDHG}$}}     & {267}                   & {3.95}                & \cellcolor{LightCyan}{94}                    & \cellcolor{LightCyan}{6.45}                & {19}                    & \cellcolor{LightCyan}{17.13}    & 380 & 5.04             \\
\multicolumn{1}{c}{\textbf{Primal simplex (Gurobi)}} & \cellcolor{LightCyan}269                   & 5.67                & 71                    & 121.23              & 19                    & 297.59   &359 &20.84          \\
\multicolumn{1}{c}{\textbf{Dual simplex (Gurobi)}}   & 268                   & 4.17                & 86                    & 37.56               & 19                    & 179.49    &373 &11.84           \\
\multicolumn{1}{c}{\textbf{Barrier (Gurobi)}}        & \cellcolor{LightCyan}269                   & \cellcolor{LightCyan}1.21                & \cellcolor{LightCyan}94                    & 15.32               & \cellcolor{LightCyan}20                    & 30.70    &\cellcolor{LightCyan}383 &\cellcolor{LightCyan}4.65           \\ \hline
\end{tabular}
}
\caption{Solve time in seconds and SGM10 of different solvers on instances of \texttt{MIP Relaxations} with tolerance $10^{-4}$: cuPDLP.jl/$\mathrm{r^2HPDHG}$ versus Gurobi with presolve.}
\label{tab:miplib-1e-4-with-presolve}
\end{table}

\begin{figure}[ht!]
	% \centering
    \hspace{-1cm}
	\begin{tabular}{c c c c}
		& \includegraphics[width=0.5\textwidth]{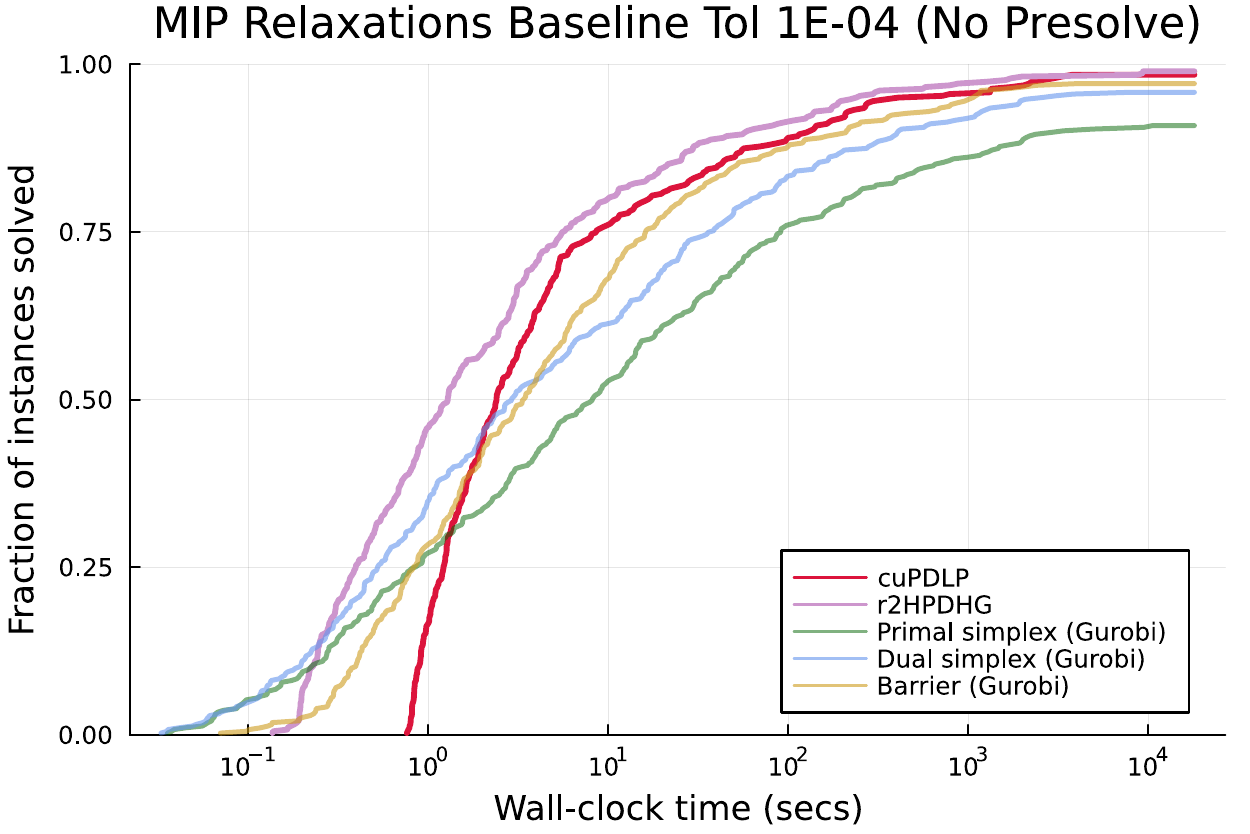}
        & \includegraphics[width=0.5\textwidth]{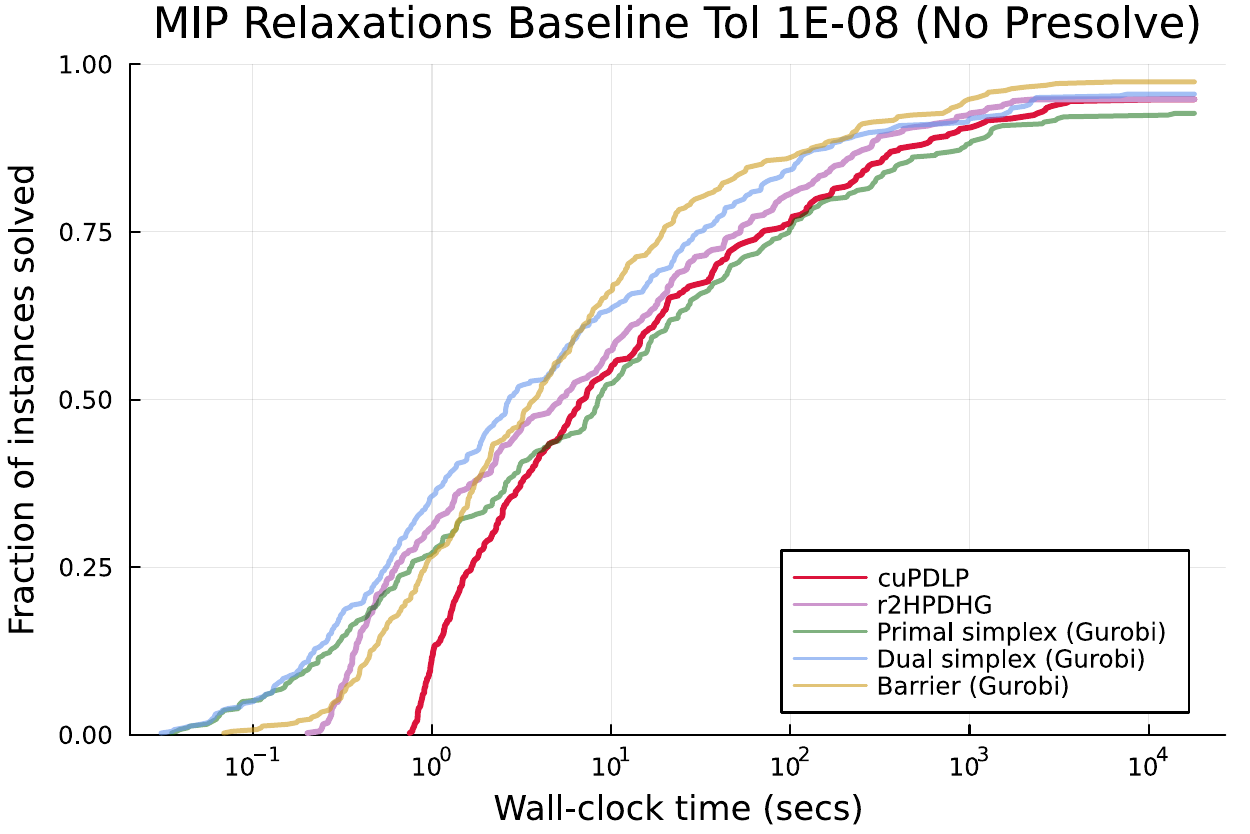}
	\end{tabular}
	\caption{Number of instances solved for \texttt{MIP Relaxations} under moderate accuracy (left) and high accuracy (right): cuPDLP.jl versus Gurobi without presolve.}
	\label{fig:performance-no-presolve}
\end{figure}

\begin{table}[h!]
\centering
{\small
\begin{tabular}{ccccccccc}
\hline
\multirow{2}{*}{}                                   & \multicolumn{2}{c}{\begin{tabular}[c]{@{}c@{}}\textbf{Small (269)} \\ (1-hour limit)\end{tabular}} & \multicolumn{2}{c}{\begin{tabular}[c]{@{}c@{}}\textbf{Medium (94)}\\ (1-hour limit)\end{tabular}}    & \multicolumn{2}{c}{\begin{tabular}[c]{@{}c@{}}\textbf{\textbf{Large (20)}}\\ (5-hour limit)\end{tabular}} & \multicolumn{2}{c}{\textbf{Total (383)}}          \\
                                                    & \textbf{Count} & \textbf{Time}  & \textbf{Count} & \textbf{Time}  & \textbf{Count} & \textbf{Time} & \textbf{Count} & \textbf{Time}  \\ \hline
\multicolumn{1}{c}{\textbf{cuPDLP.jl}}     & 264                   & 17.53                & 90                    & 30.05                & 19                    & 81.07  &373 &22.13             \\
\multicolumn{1}{c}{{$\mathrm{\bf r^2HPDHG}$}}     & {261}                   & {15.24}                & {90}                    & {21.67}                & {19}                    & {56.19}  & 370 & 18.07           \\
\multicolumn{1}{c}{\textbf{Primal simplex (Gurobi)}} & \cellcolor{LightCyan}269                   & 5.19                & 75                    & 100.03               & 18                    & 171.72        &362 &18.11      \\
\multicolumn{1}{c}{\textbf{Dual simplex (Gurobi)}}   & 268                   & 3.53                 & 89                    & 27.17               & 19                    & 121.94        &376 &9.53      \\
\multicolumn{1}{c}{\textbf{Barrier (Gurobi)}}        & \cellcolor{LightCyan}269                   & \cellcolor{LightCyan}1.34                 & \cellcolor{LightCyan}94                    & \cellcolor{LightCyan}16.85                & \cellcolor{LightCyan}20                    & \cellcolor{LightCyan}33.48       &\cellcolor{LightCyan}383 &\cellcolor{LightCyan}5.03        \\ \hline
\multicolumn{1}{l}{}                                & \multicolumn{1}{l}{}  & \multicolumn{1}{l}{} & \multicolumn{1}{l}{}  & \multicolumn{1}{l}{} & \multicolumn{1}{l}{}  & \multicolumn{1}{l}{}
\end{tabular}
}
\caption{Solve time in seconds and SGM10 of different solvers on instances of \texttt{MIP Relaxations} with tolerance $10^{-8}$: cuPDLP.jl/$\mathrm{r^2HPDHG}$ versus Gurobi with presolve.}
\label{tab:miplib-1e-8-with-presolve}
\end{table}

\begin{figure}[ht!]
	% \centering
    \hspace{-1cm}
	\begin{tabular}{c c c c}
		& \includegraphics[width=0.5\textwidth]{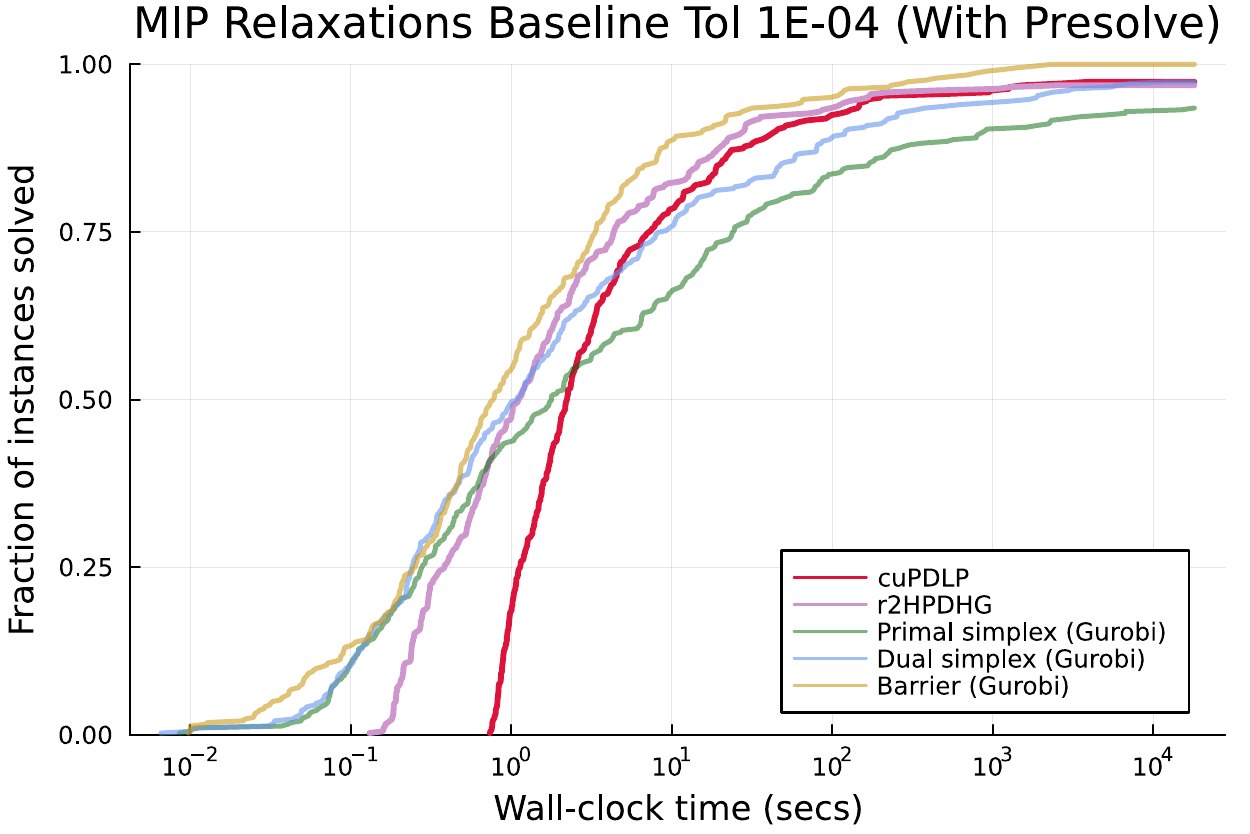}
        & \includegraphics[width=0.5\textwidth]{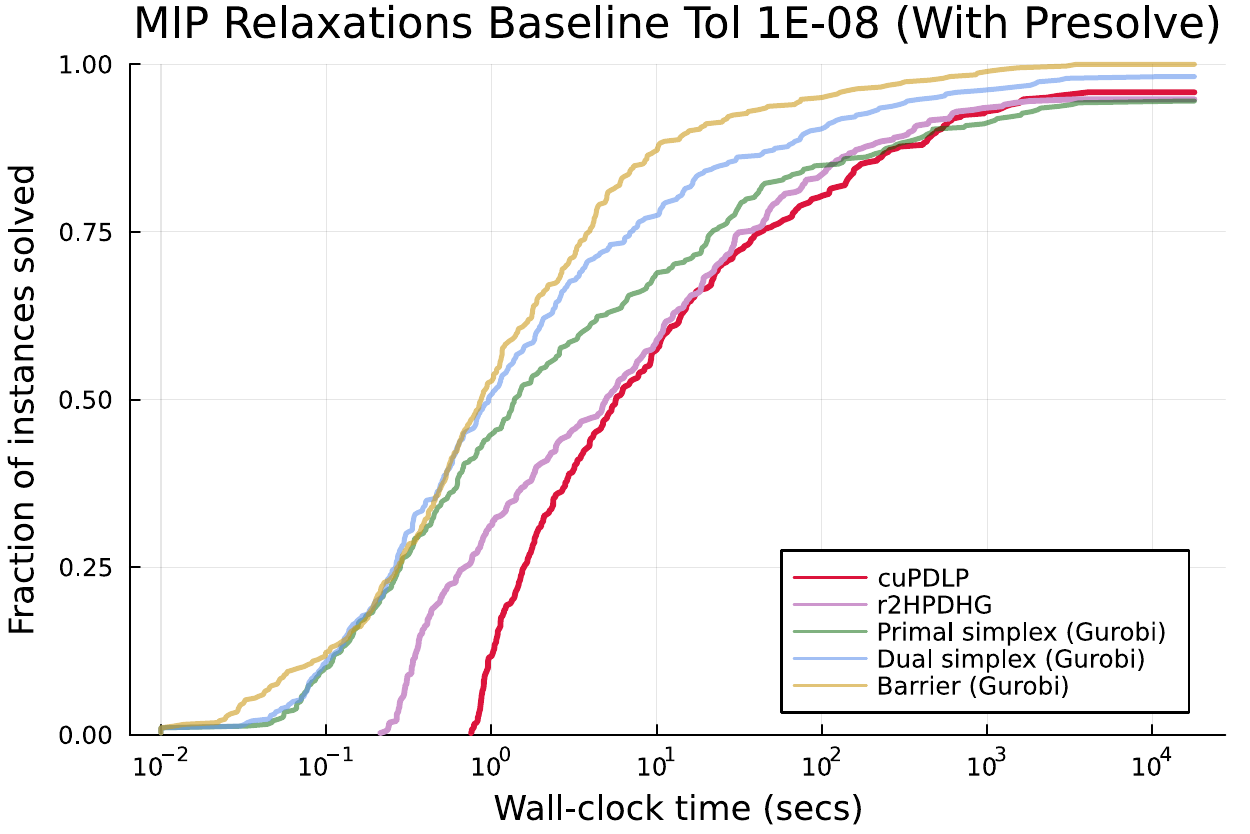}
	\end{tabular}
	\caption{Number of instances solved for \texttt{MIP Relaxations} under moderate accuracy (left) and high accuracy (right): cuPDLP.jl versus Gurobi with presolve.}
	\label{fig:performance-with-presolve}
\end{figure}

Figure \ref{fig:performance-no-presolve} and Figure \ref{fig:performance-with-presolve} show the number of solved instances of cuPDLP.jl/$\mathrm{r^2HPDHG}$ and three methods in Gurobi on \texttt{MIP Relaxations} in a given time. The y-axes display the fraction of solved instances, and the x-axes display the wall-clock time in seconds. As shown in the left panel, when seeking solutions with moderate accuracy ($\epsilon=10^{-4}$), $\mathrm{r^2HPDHG}$ has strong numerical performances, compared with Gurobi barrier. Both cuPDLP.jl and $\mathrm{r^2HPDHG}$ eventually have better performances on \texttt{MIP Relaxation} than all three methods of Gurobi without presolve. In addition, we can see the performance of cuPDLP.jl and $\mathrm{r^2HPDHG}$ for high-quality solution ($\epsilon=10^{-8}$), as shown in the right panel, is still comparable to Gurobi. An observation is that the number of instances Gurobi can solve for a given running time does not differ much for moderate and high accuracy; conversely, such difference is more apparent for cuPDLP.jl and $\mathrm{r^2HPDHG}$. This is a feature of a first-order-method-based solver. Another interesting fact is that Gurobi solves about 35\% of instances within one second, which is exactly the power of Gurobi. 

\subsubsection{Comparison with CPU-based PDLP}\label{sec:result-pdlp}
\begin{table}[ht!]
\centering
\begin{tabular}{cccccccccc}
\hline
\multirow{2}{*}{}                                   & \multicolumn{2}{c}{\begin{tabular}[c]{@{}c@{}}\textbf{Small (269)} \\ (1-hour limit)\end{tabular}} & \multicolumn{2}{c}{\begin{tabular}[c]{@{}c@{}}\textbf{Medium (94)}\\ (1-hour limit)\end{tabular}}    & \multicolumn{2}{c}{\begin{tabular}[c]{@{}c@{}}\textbf{\textbf{Large (20)}}\\ (5-hour limit)\end{tabular}}   & \multicolumn{2}{c}{\textbf{Total (383)}}       \\
                                                                                                   & \textbf{Count} & \textbf{Time} & \textbf{Count} & \textbf{Time} & \textbf{Count} & \textbf{Time}  & \textbf{Count} & \textbf{Time} \\ \hline
\multicolumn{1}{c}{\textbf{cuPDLP.jl}}     & 266                   & 8.61               & 92                    & 14.80               & 19                    & 111.19 &377 &12.02             \\
\multicolumn{1}{c}{{{$\mathrm{\bf r^2HPDHG}$}}}     & \cellcolor{LightCyan}{267}                   & \cellcolor{LightCyan}{6.61}               & \cellcolor{LightCyan}{93}                    & \cellcolor{LightCyan}{7.84}              & \cellcolor{LightCyan}{19}                    & \cellcolor{LightCyan}{90.81} & \cellcolor{LightCyan}379 & \cellcolor{LightCyan}8.58\\
\multicolumn{1}{c}{\textbf{\begin{tabular}[c]{@{}c@{}}FirstOrderLp.jl\end{tabular}}}               & 253                   & 35.94               & 82                    & 155.67              & 12                    & 2002.21  & 347 & 66.67           \\
\multicolumn{1}{c}{\textbf{\begin{tabular}[c]{@{}c@{}}PDLP (1 thread)\end{tabular}}} & 256                   & 22.69               & 85                    & 98.38             & 15                    & 1622.91   &    356 & 43.81       \\
\multicolumn{1}{c}{\textbf{\begin{tabular}[c]{@{}c@{}}PDLP (4 threads)\end{tabular}}}      & 260                   & 24.03               & 91                    & 42.94               & 15                    & 736.20     & 366 & 34.57   
\\
\multicolumn{1}{c}{\textbf{\begin{tabular}[c]{@{}c@{}}PDLP (16 threads)\end{tabular}}}      & 238                   & 104.72               & 84                    & 142.79               & 15                    & 946.24     & 337 & 127.49       \\ \hline
\end{tabular}
\caption{Solve time in seconds and SGM10 of different solvers on instances of \texttt{MIP Relaxations} with tolerance $10^{-4}$: cuPDLP.jl/$\mathrm{r^2HPDHG}$ versus PDLP.}
\label{tab:miplib-1e-4-pdlp}
\end{table}

\begin{table}[ht!]
\centering
\begin{tabular}{cccccccccc}
\hline
\multirow{2}{*}{}                                   & \multicolumn{2}{c}{\begin{tabular}[c]{@{}c@{}}\textbf{Small (269)} \\ (1-hour limit)\end{tabular}} & \multicolumn{2}{c}{\begin{tabular}[c]{@{}c@{}}\textbf{Medium (94)}\\ (1-hour limit)\end{tabular}}    & \multicolumn{2}{c}{\begin{tabular}[c]{@{}c@{}}\textbf{\textbf{Large (20)}}\\ (5-hour limit)\end{tabular}}   & \multicolumn{2}{c}{\textbf{Total (383)}}       \\
                                                                                                   & \textbf{Count} & \textbf{Time} & \textbf{Count} & \textbf{Time} & \textbf{Count} & \textbf{Time}  & \textbf{Count} & \textbf{Time} \\ \hline
\multicolumn{1}{c}{\textbf{cuPDLP.jl}}     & \cellcolor{LightCyan}261                   & 23.47                & 86                    & 40.69                & \cellcolor{LightCyan}16                    & 421.40  &\cellcolor{LightCyan}363 &32.35             \\
\multicolumn{1}{c}{{{$\mathrm{\bf r^2HPDHG}$}}}     & {260}                   & \cellcolor{LightCyan}{19.13}               & \cellcolor{LightCyan}{87}                    & \cellcolor{LightCyan}{28.35}              & \cellcolor{LightCyan}{16}                    & \cellcolor{LightCyan}{229.47} & \cellcolor{LightCyan}363 & \cellcolor{LightCyan}24.79  \\
\multicolumn{1}{c}{\textbf{FirstOrderLp.jl}}       & 235                   & 91.14                & 68                    & 389.34               & 9                     & 3552.50    &312 &160.63          \\
\multicolumn{1}{c}{\textbf{\begin{tabular}[c]{@{}c@{}}PDLP (1 thread)\end{tabular}}} & 250                   & 49.31               & 73                    & 259.04             & 12                    & 3818.42   &    335 & 96.86       \\
\multicolumn{1}{c}{\textbf{\begin{tabular}[c]{@{}c@{}}PDLP (4 threads)\end{tabular}}}      & 245                   & 54.19               & 81                   & 136.16               & 14                    & 1789.54     & 340 & 83.49         \\ 
\multicolumn{1}{c}{\textbf{\begin{tabular}[c]{@{}c@{}}PDLP (16 threads)\end{tabular}}}      & 214                   & 248.34               & 69                   & 403.17               & 14                    & 2475.57     & 297 & 316.27        \\ 
\hline
\end{tabular}
\caption{Solve time in seconds and SGM10 of different solvers on instances of \texttt{MIP Relaxations} with tolerance $10^{-8}$: cuPDLP.jl/$\mathrm{r^2HPDHG}$ versus PDLP.}
\label{tab:miplib-1e-8-pdlp}
\end{table}

Tables \ref{tab:miplib-1e-4-pdlp} and \ref{tab:miplib-1e-8-pdlp} present the performance comparison of cuPDLP.jl/$\mathrm{r^2HPDHG}$ and its CPU implementations on \texttt{MIP Relaxations} with tolerances of $10^{-4}$ and $10^{-8}$, respectively. The two tables demonstrate that GPU can significantly speed up PDLP, in particular for large instances:
\begin{itemize}
    \item 
    For moderate accuracy (Table \ref{tab:miplib-1e-4-pdlp}), $\mathrm{r^2HPDHG}$ demonstrates a 5x speed-up for small instances, a 20x speed-up for medium instances, and a 22x speed-up for large instances, compared to FirstOrderLp.jl, a CPU implementation of PDLP in Julia. When comparing $\mathrm{r^2HPDHG}$ with C++ implementation PDLP with multithreading support, it still exhibits a significant speedup for all sizes of problems. The significant speed-up can also be observed for high accuracy (Table \ref{tab:miplib-1e-8-pdlp}).
    \item In terms of solved count, cuPDLP.jl and $\mathrm{r^2HPDHG}$ solves significantly more instances regardless of the scales. In particular, comparing with FirstOrderLp.jl under tolerance $\epsilon=10^{-4}$, $\mathrm{r^2HPDHG}$ solves 14 more small-sized instances, 11 more medium-sized instances and 7 more large problems, with in total 32 more instances solved. Compared to PDLP with the best of 1 thread, 4 threads or 16 threads, $\mathrm{r^2HPDHG}$ can solve 7 more small-sized instances and 4 more large-sized instances. The improvement is also remarkable when looking at results for high accuracy $\epsilon=10^{-8}$.
\end{itemize}
In summary, the GPU-implemented cuPDLP.jl and $\mathrm{r^2HPDHG}$ consistently outperform the CPU-implemented PDLP in numerical performance on \texttt{MIP Relaxations}, with cuPDLP.jl and $\mathrm{r^2HPDHG}$ demonstrating even more pronounced advantages on instances of medium to large scale.

\begin{figure}[ht!]
	% \centering
    \hspace{-1cm}
	\begin{tabular}{c c c c}
		& \includegraphics[width=0.5\textwidth]{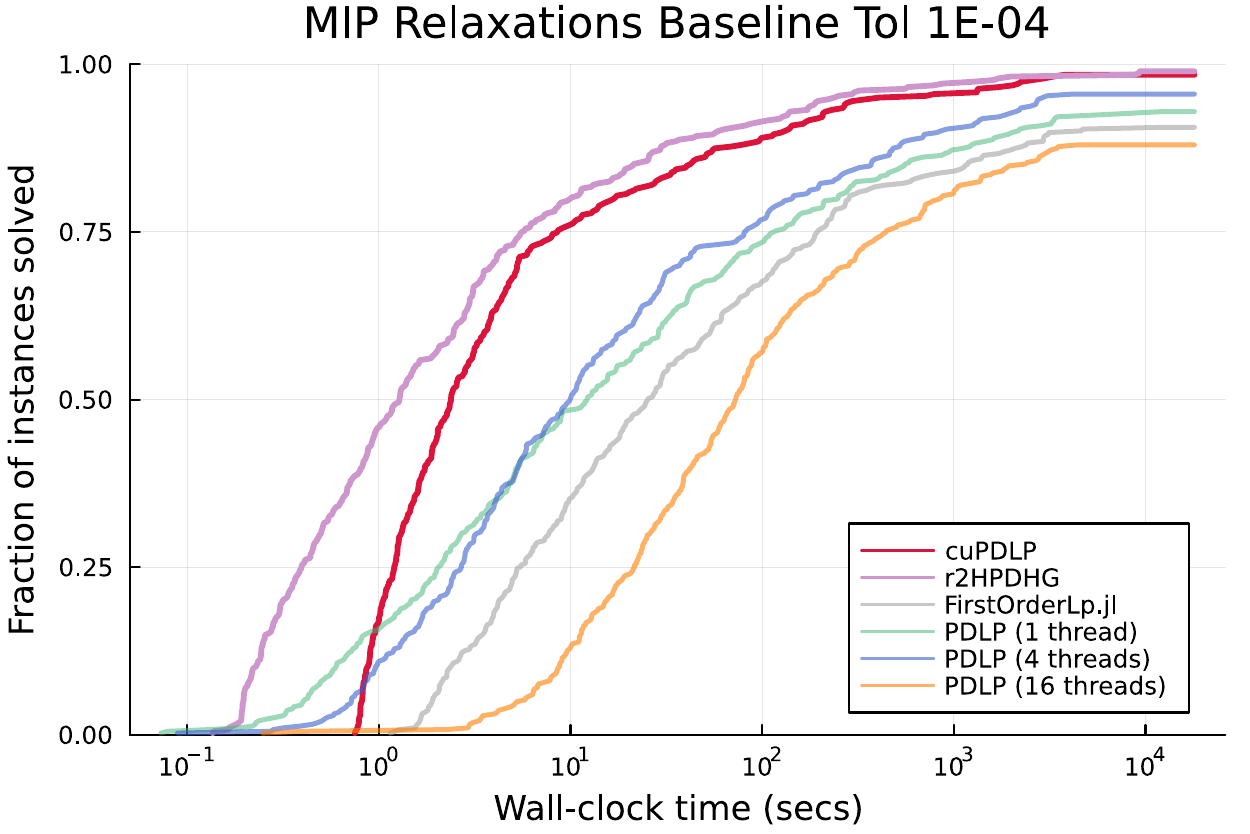}
        & \includegraphics[width=0.5\textwidth]{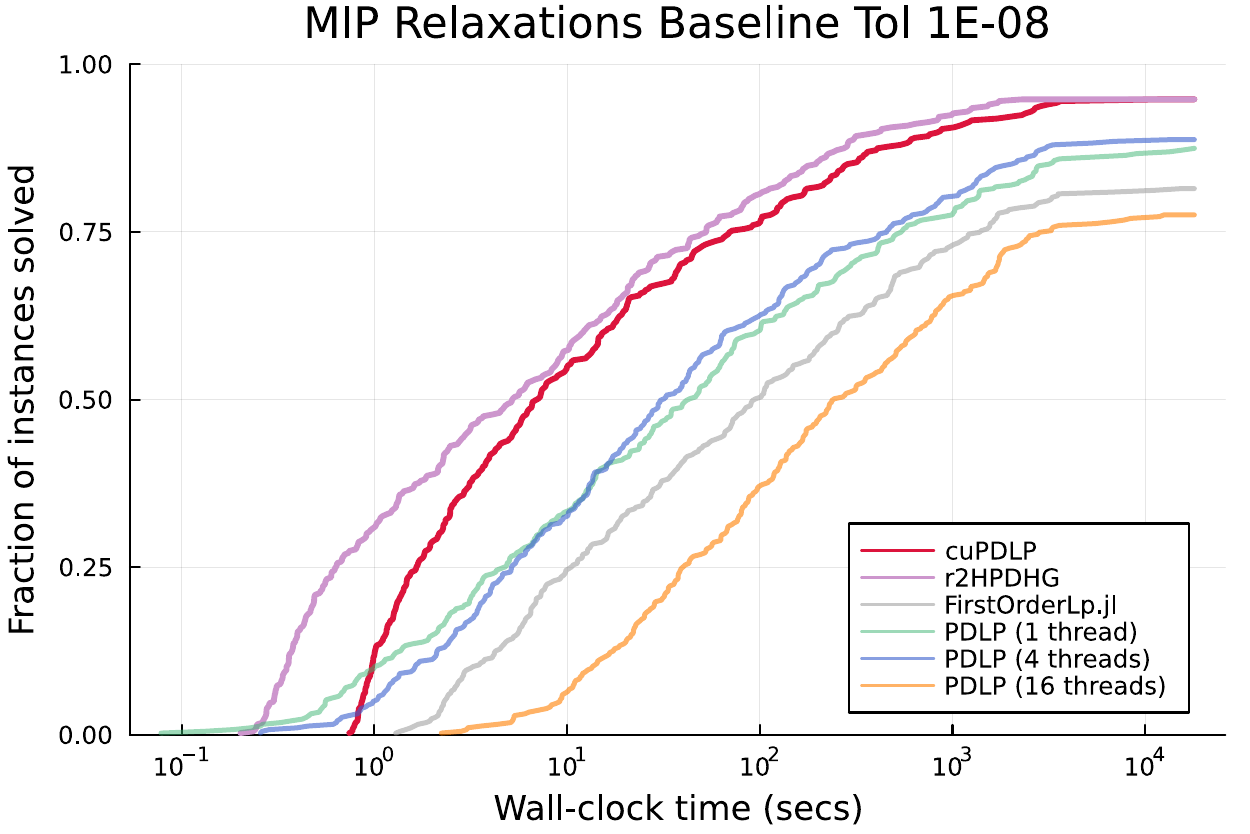}
	\end{tabular}
	\caption{Number of instances solved for \texttt{MIP Relaxations} under moderate accuracy (left) and high accuracy (right): cuPDLP.jl versus PDLP.}
	\label{fig:performance-pdlp}
\end{figure}

Similar to Figure \ref{fig:performance-no-presolve} and \ref{fig:performance-with-presolve}, Figure \ref{fig:performance-pdlp} demonstrates the number of solved instances of cuPDLP.jl/$\mathrm{r^2HPDHG}$ and three CPU implementations of PDLP on \texttt{MIP Relaxations} in a given time. As shown in both panels, when seeking solutions with moderate accuracy ($\epsilon=10^{-4}$) and high accuracy ($\epsilon=10^{-8}$), cuPDLP.jl and $\mathrm{r^2HPDHG}$ have a clear superior performance to all CPU versions of PDLP.

\subsubsection{Additional Comparison from Industry}
As (cu)PDLP has been incorporated into both commercial and open-source optimization solvers, including those developed by Gurobi, COPT, FICO Xpress, HiGHS, Google, and NVIDIA, it has increasingly become the subject of performance evaluations and comparisons across various organizations. Below, we summarize key insights from these discussions.

\begin{itemize}

    \item \textbf{COPT~\cite{coptlink}}: Cardinal Optimizer (COPT) has released an open-source implementation, cuPDLP-C~\cite{lu2023cupdlpc}, which is a C-based version of cuPDLP.jl. According to Mittelmann's benchmark, cuPDLP-C was 2.3× slower than the leading CPU-based solver for obtaining a primal-dual feasible solution (as of February 3, 2025), and 1.84× slower for obtaining an optimal basic solution (as of April 4, 2025), within a reasonable factor. At the meantime, COPT reports numerous instances where cuPDLP-C exhibits significant advantages, particularly in problems like PageRank, unit commitment, supply chain optimization, and quadratic assignment problem. For example, the classic zib03 instance can be solved by cuPDLP-C in under 15 minutes, compared to 16 hours using a CPU-based interior-point method.

    \item \textbf{NVIDIA}: NVIDIA has developed a GPU-based implementation of cuPDLP and plans to open-source it. In a technical blog~\cite{nvidianews}, NVIDIA compares its implementation (cuOpt) against a state-of-the-art CPU-based LP solver on Mittelmann's benchmark set at a tolerance of $10^{-4}$. Among the instances where both solvers converged with a correct objective value, cuOpt was faster on 60\% of the cases and more than 10× faster on 20\%. The largest observed speedup was 5000× on a large-scale multi-commodity flow problem.

    \item \textbf{Gurobi~\cite{gurobilink}}: Gurobi conducted comparisons between GPU-based PDLP and both CPU- and GPU-accelerated barrier methods. Their main conclusion is that performance strongly depends on the model characteristics. While GPU-based PDLP shows promise on certain instances, CPU-based solvers continue to outperform it on the majority of their internal test cases. Gurobi also emphasizes the importance of solver tolerance and the role of crossover procedures in the overall performance and solution usability. 
\end{itemize}

\subsection{Applications}\label{sec:application}
While PDLP is still a relatively new methodology for LP, it has already been applied to a variety of domains, demonstrating practical advantages over existing algorithms. Below is a subset of notable applications:

\begin{itemize} 
\item \cite{googleblog} describes the use of PDLP at Google, particularly in network traffic routing within Google's data centers, leading to significant resource savings. Additionally, PDLP aids in solving large-scale integer two-layer multi-commodity flow problems in the shipping industry, improving the efficiency of container placement and routing. PDLP has also been employed to compute lower bounds for large-scale TSP instances, effectively handling problems with up to 12 billion non-zero entries in the constraint matrix. 
\item \cite{coptlink} describes a real-world application of cuPDLP-C for a security-constrained unit commitment (SCUC) problem, which is a crucial optimization model used in electricity production planning. By combining cuPDLP-C with the simplex and barrier methods in COPT, a 20\% average reduction is achieved in SCUC solution time, enabling faster, more efficient electricity production planning.
\item \cite{lu2023optimizing} highlights how PDLP enables the design and solving of large-scale targeted marketing policies that maximize the total revenue with budget and fairness constraints, whereas commercial solvers fail to solve this class of linear programming problems with moderate size.
\item \cite{de2024power} demonstrates that cuPDLP can significantly improve the computational performance of LP relaxations for $K$-means clustering compared to existing commercial solvers, facilitating large-scale, tighter $K$-means formulations. In particular, the GPU implementation of PDLP provides substantial speedup compared to its CPU counterparts. 
\item \cite{lu2024power} showcases the power of linear programming in sponsored listing rankings, with empirical evidence from an online marketplace where PDLP exhibits strong numerical performance in the overall planning step. 
\item {\cite{kempke2025low} demonstrates the potential of using PDLP within a fix-and-propagate heuristic framework for large-scale unit commitment problems, which are mixed-integer programming. The approach solves unit-commitment instances with up to 243 million non-zeros and 8 million variables, achieving feasible solutions within hours, where commercial solvers fail to produce any in two days.}
\end{itemize}

\subsection{Discussion}\label{sec:discuss}
In this section, we discuss the advantages and limitations of PDLP, compared to the traditional simplex and barrier methods for LP. Notice that PDLP is a rather new methodology, and the future improvements and understanding of the algorithm can likely lead to further speedup.

PDLP offers several key advantages that make it a compelling choice for solving linear programming problems. First, it is highly memory-efficient, as it avoids factorization, enabling it to handle large-scale problems that are intractable for factorization-based methods. Second, its design is well-suited for modern computational hardware, offering superior GPU speedup and efficient scalability. Finally, PDLP excels at quickly obtaining approximate solutions with minimal computational overhead, making it an attractive option in scenarios where high accuracy is not immediately necessary. By comparison, the computational cost of simplex and barrier methods for approximate solutions is nearly identical to that required for high-accuracy solutions. This makes PDLP especially well-suited to machine learning and data science applications, where approximate solutions are often sufficient. Collectively, these features demonstrate PDLP's efficiency and adaptability in modern optimization workflows.

Despite its benefits, PDLP has notable drawbacks that limit its applicability in certain contexts. Achieving high-accuracy solutions with PDLP remains challenging, as first-order methods generally struggle to meet the extreme precision standards required in some applications. Furthermore, PDLP is sensitive to hyper-parameter tuning, such as step-size, primal-weight and preconditioning parameters, which can significantly affect performance. For some problem instances, PDLP may require a substantial number of iterations to converge, leading to increased computational costs compared to factorization-based methods. Furthermore, if the factorization is cheap, then an interior-point solver often beats PDLP even if PDLP does not require that many iterations.  Additionally, the underlying problem structures that influence the ease or difficulty of solving an instance with PDLP are not yet fully understood, complicating its practical deployment. These limitations demonstrate the need for further research and development to make PDLP a more robust and versatile solver.

{
Another important consideration is the crossover procedure, which converts an approximately optimal solution obtained by PDLP into an exact vertex solution of the LP. This step is often essential in applications that require interpretable or sparse solutions, and is particularly valuable when PDLP is used as a subroutine within integer programming. While mature crossover techniques developed for IPMs can be adapted to work with PDLP solutions, their runtime performance is often highly sensitive to the quality of the approximate solution provided by PDLP. Designing an efficient and scalable crossover mechanism specifically tailored for first-order method solvers like PDLP remains an open and important challenge, one that represents a promising direction for future research.
}

Lastly, we comment that the development of PDLP (and first-order methods for mathematical programming more generally) is in a very early stage, similar to the 1990s for the development of interior-point methods. There can be significant potential to further speed up and scale up these methodologies with more substantial numerical development and theoretical understanding.

\section{Theoretical Understandings on PDHG for LP}\label{sec:theory}

In this section, we examine the theoretical properties of PDHG for linear programming (LP). 

For simplicity and cleanness of the exposition, in this section, we discuss the theoretical results of PDHG for solving the standard form of LP~\eqref{eq:primal}. Almost all of these theoretical results can be extended to the general form~\eqref{eq:lp-general}. Furthermore, we assume the primal $\eta$ and the dual step-sizes $\sigma$ are the same throughout this section, i.e., $\sigma=\eta<\frac{1}{\|A\|_2}$. This can be achieved without the loss of generality by considering a rescaled LP instance~\cite{applegate2023faster}. Specifically, we consider primal-dual LP form \eqref{eq:minmax}, and under the above assumptions, PDHG for solving \eqref{eq:minmax} has the update rule as below
\begin{equation}\label{eq:pdhg-thm}
    \begin{aligned}
        & x^{k+1}\leftarrow \text{proj}_{\mathbb R^n_+}(x^k+\eta A^\top y^k-\eta c) \\ 
        & y^{k+1}\leftarrow y^k-\eta A(2x^{k+1}-x^k)+\eta b\ .
    \end{aligned}
\end{equation}

Let $\mathrm{PDHG}(\cdot)$ denote the operator induced by the PDHG iteration~\eqref{eq:pdhg-thm} for solving~\eqref{eq:minmax}, expressed as:
\begin{equation*}
    z^{k+1}=\mathrm{PDHG}(z^k) =\begin{pmatrix}
        \text{proj}_{\mathbb R^n_+}(x^k+\eta A^\top y^k-\eta c) \\ 
        y^k-\eta A(2x^{k+1}-x^k)+\eta b
    \end{pmatrix}\ ,
\end{equation*}
where $z^k=(x^k,y^k)\in \mathbb R^{m+n}$. 

We begin by discussing different progress metrics used in analysis of PDHG for LP (Section \ref{sec:metric}). In Section \ref{sec:unified}, we present a simplified perspective on PDHG for LP, which facilitates a short and intuitive proof of sublinear convergence for both average and last-iterate sequences. In Section \ref{sec:operator}, we interpret the PDHG iteration as a non-expensive operator, discussing theoretical guarantees for Halpern PDHG and its reflected variant. Sharpness conditions are introduced in Section \ref{sec:sharp}, where we explore how these conditions enable linear convergence of the last iterates. Restarted variants of PDHG are discussed in Section \ref{sec:optimal}, demonstrating their ability to achieve an accelerated and optimal complexity for FOMs in solving LP, consistent with established lower bounds. Section \ref{sec:refine} presents refined analyses of PDHG for LP, providing deeper insights into their theoretical performance. The previous subsections assume the LP is feasible and bounded, and lastly, Section \ref{sec:infeas} discusses how PDHG can detect infeasibility without additional computational overhead. 

\begin{table}[ht!]
\small
\hspace{-0.2cm}
\begin{tabular}{|c|ccc|ccc|}
\hline
\textbf{Problem setting}  & \multicolumn{3}{c|}{Convex-Concave}                                & \multicolumn{3}{c|}{Sharp and Convex-Concave}                                                             \\ \hline
\textbf{Type of iterates} & \multicolumn{1}{c|}{Last} & \multicolumn{1}{c|}{Average} & Halpern & \multicolumn{1}{c|}{Last} & \multicolumn{1}{c|}{Average + Restart} & Halpern + Restart \\ \hline
\textbf{Complexity}       & \multicolumn{1}{c|}{$O\pran{{1}/{\epsilon^2}}$}    & \multicolumn{1}{c|}{$O\pran{{1}/{\epsilon}}$}       & $O\pran{{1}/{\epsilon}}$       & \multicolumn{1}{c|}{$O\pran{\kappa^2\log\pran{1/\epsilon}}$}     & \multicolumn{1}{c|}{$O\pran{\kappa\log\pran{1/\epsilon}}$}                  &    $O\pran{\kappa\log\pran{1/\epsilon}}$               \\ \hline
\end{tabular}
\caption{Complexity of PDHG variants under convex-concave and sharp convex-concave setting. $\epsilon$ is the desired accuracy, and $\kappa$ is the condition number of sharp problems (see Section \ref{sec:sharp}).}
\label{tab:rate}
\end{table}

Table \ref{tab:rate} summarizes the complexity results of the PDHG variants discussed in this section. The primal-dual formulation of linear programming \eqref{eq:minmax} is convex-concave and satisfies a certain sharpness condition. Consequently, all the complexity rates of the PDHG variants in Table \ref{tab:rate} apply to solving LPs. In the convex-concave setting, the last-iterate sublinear rate of vanilla PDHG is slower than its averaged or Halpern version. When the primal-dual problem is further sharp, restarted average and restarted Halpern variants could also speed up the complexity. These results explain why averaging and Halpern acceleration schemes are typically employed in PDLP implementations to enhance convergence performance for LP problems.

\subsection{Progress Metrics}\label{sec:metric}

In LP literature and solver development, KKT error is perhaps the most natural and widely used termination metric for quantifying the sub-optimality of a primal-dual solution pair. It provides a comprehensive measure of the quality of a solution by considering three key aspects: primal infeasiblity, dual infeasibility and primal-dual gap. Formally, the KKT error of LP \eqref{eq:primal} is defined as follows:
    \begin{mydef}[KKT error]\label{def:kkt}
        For a primal-dual problem~\eqref{eq:minmax} and a solution $z=(x,y)\in \mathcal Z=\mathbb R^n_+ \times \mathbb R^m$, the KKT error is defined as
        \begin{equation}
            \mathrm{KKT}(z)=\mathrm{KKT}(x,y)=\left\| \begin{pmatrix}
                Ax-b \\ [A^\top y-c]^+ \\ [c^\top x-b^\top y]^+
            \end{pmatrix}\right\|_2 \ .
        \end{equation}
    \end{mydef}

For a primal-dual solution pair $z=(x,y)$, if $\operatorname{KKT}(z)=0$, then $x$ is an optimal solution to the primal LP \eqref{eq:primal}, and $y$ is an optimal solution to the dual LP \eqref{eq:dual} by strong duality of LP~\cite{bertsimas1997introduction}. Although we define the KKT error with the $\ell_2$ norm, effectively any norm can induce a valid progress metric, and commercial LP solvers often utilize $\ell_\infty$ norm in the termination check. We here choose to use $\ell_2$ norm for the ease of presentation, and the analysis presented in this section can be extended to other norms.

Note the progress metric inherent in algorithmic analysis often differs, depending on the algorithm's nature. Since PDHG is a primal-dual first-order method, a commonly used progress metric is the primal-dual gap~\cite{chambolle2011first, chambolle2016ergodic}. However, for LP problems, the feasible region of the primal-dual formulation \eqref{eq:minmax} is unbounded. Consequently, the primal-dual gap at the current solution is often infinite, rendering it uninformative.

To address this issue, \cite{applegate2023faster} introduced the normalized duality gap, a variant of the duality gap that is constrained within a ball centered at the current solution. This refinement provides a more informative progress metric for analyzing primal-dual LP algorithms.

    \begin{mydef}[Normalized duality gap {\cite{applegate2023faster}}]\label{def:ndg}
        For a primal-dual problem~\eqref{eq:minmax} and a solution $z=(x,y)\in \mathcal Z=\mathbb R^n_+ \times \mathbb R^m$, the normalized duality gap with radius $r$ is defined as
        \begin{equation}\label{eq:ndg}
        \rho_r(z)=\max_{\hat z \in W_r(z)}\frac{L(x,\hat y)-L(\hat x,y)}{r} \ ,
    \end{equation}
    where $W_r(z)=\{ \hat z\in\mathcal Z \;|\; \Vert z-\hat z\Vert_{2}\leq r \}$ is a ball centered at $z$ with radius $r$ intersected with $\mathcal Z=\mathbb R^n_+ \times \mathbb R^m$.
    \end{mydef}
    It is straight-forward to show that the normalized duality gap is nonnegative, finite, continuous, and a valid progress measurement for LP, i.e., $\rho_r(z)=0$ if and only if $z$ is an optimal solution to \eqref{eq:minmax}.

    Since \eqref{eq:minmax} represents a convex-concave primal-dual problem, a solution with a zero subdifferential is a saddle point of \eqref{eq:minmax}, corresponding to an optimal primal-dual solution pair. As such, another natural progress metric is the deviation of the subdifferential at the current solution from zero:

    \begin{mydef}[Infimal sub-differential size~\cite{lu2022infimal}]\label{def:ids}
        For a primal-dual problem~\eqref{eq:minmax} and a solution $z=(x,y)\in \mathcal Z=\mathbb R^n_+ \times \mathbb R^m$, we call $\mathcal F(z)=\mathcal F(x,y)=\begin{pmatrix}
        \partial_x L(x,y) \\ -\partial_{y}  L(x,y)
        \end{pmatrix}$ the sub-differential of the objective function $L(x,y)$ (more precisely, the sub-gradient over the primal variable $x$ and the negative sub-gradient over the dual variable $y$), and the infimal sub-differential size is defined as
        \begin{equation}
            \mathrm{dist}_2(0,\mathcal F(z)) = \min_{\hat u\in \mF(z)} \|\hat u\|_2\ .
        \end{equation}
    \end{mydef}

    Another intuitive progress metric is the distance to the optimal solution set, which is commonly used in the linear convergence analysis:

    \begin{mydef}[Distance to optimality]\label{def:dist}
        For a primal-dual problem~\eqref{eq:minmax} a solution $z=(x,y)\in \mathbb R^{m+n}$, %and a positive-semidefinte matrix $P$, 
        the distance of solution $z$ to optimality %with respect to $P$ norm 
        is defined as
        % \begin{equation}
        %     \mathrm{dist}_P(z,\mZ^*) = \min_{z^*\in \mZ^*} \|z-z^*\|_P\ ,
        % \end{equation}
        % where $\mZ^*$ is the set of the optimal primal-dual solution pairs. Particularly, we define 
        \begin{equation}
            \mathrm{dist}_2(z,\mZ^*) = \min_{z^*\in \mZ^*} \|z-z^*\|_2 \ ,
        \end{equation}
        where $\mZ^*$ is the set of the optimal primal-dual solution pairs.
    \end{mydef}

    The last progress metric we introduce here is the fixed-point residual, which measures the movement after one algorithmic iteration. For a reasonable algorithm, an optimal solution should always be a fixed point for the algorithm. For PDHG, it turns out that the inherent norm of the algorithm is associated with the positive-semidefinite matrix $P=\begin{pmatrix}
        \frac{1}{\eta}I & A^\top \\ A & \frac{1}{\eta}I
    \end{pmatrix}$, where $\eta\le \frac{1}{\|A\|_2}$ is the step-size for PDHG (see subsection \ref{sec:unified} for more intuitions and details). The fixed point residual of PDHG is defined with the $P$ norm defined as $\|v\|_P:=\sqrt{\langle v, Pv\rangle}$:
    
    \begin{mydef}[Fixed-point residual]\label{def:fixed-point}
        For a primal-dual problem~\eqref{eq:minmax} and a solution $z=(x,y)\in \mathbb R^{m+n}$, let $\widetilde z:=\mathrm{PDHG}(z)$ be one PDHG iteration from $z$. Let $P$ be a positive definite matrix. Then the fixed-point residual at solution $z$ is defined as $\|z-\widetilde z\|_P$. 
    \end{mydef}

    The next two propositions demonstrate that the KKT error can be upper-bounded by the other four metrics; thus, if any of these metrics go to 0, the KKT error also goes to 0 with the same rate, up to a constant. Notably, while the normalized duality gap and the infimal subdifferential size upper bounds KKT error at the same iterate, fixed-point residual and distance to optimality upper bounds KKT error at the next PDHG iterate.
    \begin{prop}[\cite{applegate2023faster,lu2022infimal}]\label{prop:chain}
    Let $z\in\mathcal Z=\mathbb R^n_+ \times \mathbb R^m$. Then it holds for any $R\geq \|z\|_2$ and $r\in (0,R]$ that
        \begin{equation*}
            \frac{1}{\sqrt{1+R^2}}\mathrm{KKT}(z)\leq \rho_r(z)\leq \mathrm{dist}_2(0,\mathcal F(z)) \ .
        \end{equation*}
    \end{prop}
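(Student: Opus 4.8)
The plan is to derive both inequalities from a single exact identity for the ``swapped'' Lagrangian difference. Since every point of $\mathcal Z=\mathbb R^n_+\times\mathbb R^m$ lies in the domain of $\iota_{\mathbb R^n_+}$, the indicator terms in $L$ drop out when comparing two feasible points, and writing $p=\hat x-x$, $q=\hat y-y$ a direct expansion gives
\begin{equation*}
L(x,\hat y)-L(\hat x,y)=p^\top(A^\top y-c)-q^\top(Ax-b)\ ,\qquad \hat z=(\hat x,\hat y)\in\mathcal Z\ .
\end{equation*}
I abbreviate $a:=Ax-b$, $w:=A^\top y-c$, $d:=[w]^+$ and $g:=[c^\top x-b^\top y]^+$, so that $\mathrm{KKT}(z)^2=\|a\|_2^2+\|d\|_2^2+g^2$.

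For the upper bound $\rho_r(z)\le\mathrm{dist}_2(0,\mathcal F(z))$, I bound the numerator of $\rho_r$ by a subgradient. Convexity of $L$ in $x$ and concavity in $y$ give, for any $g_x\in\partial_xL(x,y)$ and $g_y\in\partial_yL(x,y)$, the inequalities $L(x,y)-L(\hat x,y)\le g_x^\top(x-\hat x)$ and $L(x,\hat y)-L(x,y)\le g_y^\top(\hat y-y)$; adding them and setting $\hat u=(g_x,-g_y)\in\mathcal F(z)$ yields $L(x,\hat y)-L(\hat x,y)\le\langle \hat u,\,z-\hat z\rangle$. Cauchy--Schwarz with $\|z-\hat z\|_2\le r$ on $W_r(z)$ then gives the bound $r\|\hat u\|_2$; dividing by $r$, maximizing over $\hat z$, and taking the infimum over $\hat u\in\mathcal F(z)$ closes this half.

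The harder direction $\frac{1}{\sqrt{1+R^2}}\mathrm{KKT}(z)\le\rho_r(z)$ I prove by exhibiting one admissible perturbation, since $\rho_r$ is a maximum. I take the direction $v=(d-gx,\,-a-gy)$ and the test point $\hat z=z+tv$ with $t=\min\{r/\|v\|_2,\,1/g\}$ (reading $1/g=+\infty$ when $g=0$). Substituting $v$ into the identity, the bilinear cross-terms cancel and one is left with $p^\top w-q^\top a=\|a\|_2^2+\|d\|_2^2+g\,(c^\top x-b^\top y)=\mathrm{KKT}(z)^2$. The cap $t\le 1/g$ forces $x+tp=(1-tg)x+td\ge0$, so $\hat z\in W_r(z)$, while $t\le r/\|v\|_2$ forces $\|tv\|_2\le r$; hence $\rho_r(z)\ge t\,\mathrm{KKT}(z)^2/r$.

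It remains to bound $\|v\|_2$ and to resolve the two cases for $t$, which I expect to be the \emph{main obstacle}. Expanding $\|v\|_2^2=\|a\|_2^2+\|d\|_2^2+g^2\|z\|_2^2+2g(\langle a,y\rangle-\langle d,x\rangle)$ and using $\langle d,x\rangle\ge0$, $\langle a,y\rangle\le\|a\|_2\|y\|_2$ and $\|y\|_2\le\|z\|_2\le R$, one completes the square to obtain $(1+R^2)\mathrm{KKT}(z)^2-\|v\|_2^2\ge(R\|a\|_2-g)^2+R^2\|d\|_2^2\ge0$, i.e. $\|v\|_2\le\sqrt{1+R^2}\,\mathrm{KKT}(z)$. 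When $t=r/\|v\|_2$ this yields $\rho_r(z)\ge\mathrm{KKT}(z)^2/\|v\|_2\ge\mathrm{KKT}(z)/\sqrt{1+R^2}$; when instead $t=1/g$, one uses $g\le\mathrm{KKT}(z)$ together with $r\le R\le\sqrt{1+R^2}$ to get $rg\le\sqrt{1+R^2}\,\mathrm{KKT}(z)$ and hence the same bound. The delicate point is precisely that the feasibility constraint $\hat x\ge0$ caps the step at $1/g$ while the uncontrolled sign of the cross term $2g\langle a,y\rangle$ must be absorbed using $\|z\|_2\le R$; together these are what produce the factor $\sqrt{1+R^2}$ and necessitate the two-case scaling argument.
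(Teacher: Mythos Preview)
Your argument is correct. The paper itself does not prove this proposition but merely cites \cite{applegate2023faster,lu2022infimal}; your construction of the test direction $v=(d-gx,\,-a-gy)$ with the step cap $t=\min\{r/\|v\|_2,\,1/g\}$ is precisely the approach taken in the cited reference \cite{applegate2023faster}, and your treatment of the two cases and the bound $\|v\|_2\le\sqrt{1+R^2}\,\mathrm{KKT}(z)$ via completion of the square is clean and matches that source in spirit.

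Two very minor remarks. First, you might state explicitly that if $\mathrm{KKT}(z)=0$ the inequality is trivial, and otherwise $v\neq0$ (since your identity gives $L(x,\hat y)-L(\hat x,y)=t\,\mathrm{KKT}(z)^2$, which would force $\mathrm{KKT}(z)=0$ if $v=0$); this disposes of the edge case where $r/\|v\|_2$ is undefined. Second, the identity $g(c^\top x-b^\top y)=g^2$ relies on the fact that $g=[c^\top x-b^\top y]^+$ is either $c^\top x-b^\top y$ or $0$, which you use implicitly; a one-line remark there removes any doubt. Neither point affects correctness.
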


    Let $\widetilde z := \mathrm{PDHG}(z)$ represent one PDHG iterate from $z$ with stepsize $\eta$, and then it holds from~\cite{lu2022infimal} that 
    $$P(z-\tilde z)\in \mathcal F(\tilde z)\ .$$
    By Proposition \ref{prop:chain}, the following proposition for PDHG iterates holds:
    
    \begin{prop}\label{prop:chain-pdhg}
    Suppose step-size $\eta<\frac{1}{\|A\|_2}$. Then it holds for any $z\in\mathbb R^{m+n}$, $\widetilde z = \mathrm{PDHG}(z)$, $\|\widetilde z\|_2\le R$ and $r\in (0,R]$ that
        \begin{equation*}
            \frac{1}{\sqrt{1+R^2}}\mathrm{KKT}(\widetilde z)\leq \rho_r(\widetilde z)\leq  \sqrt{\sigma_{\operatorname{max}}(P)}\|z-\widetilde z\|_P\leq  2{\sigma_{\operatorname{max}}(P)}\mathrm{dist}_2(z,\mZ^*)\ .
        \end{equation*}
    \end{prop}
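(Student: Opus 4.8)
The plan is to prove the claim as a single chain of inequalities, reading the four displayed quantities from left to right while inserting two intermediate quantities, $\mathrm{dist}_2(0,\mathcal F(\widetilde z))$ and $\|P(z-\widetilde z)\|_2$, that bridge the gaps. The only genuine inputs are Proposition~\ref{prop:chain}, the inclusion $P(z-\widetilde z)\in\mathcal F(\widetilde z)$ recorded just above, the $P$-norm non-expansiveness of the $\mathrm{PDHG}$ operator, and two elementary spectral bounds for the symmetric positive definite matrix $P$. First I would apply Proposition~\ref{prop:chain} at the point $\widetilde z$. This is legitimate because $\widetilde z=\mathrm{PDHG}(z)$ lies in $\mathcal Z=\mathbb R^n_+\times\mathbb R^m$ (its primal block is a projection onto $\mathbb R^n_+$ and its dual block is unconstrained), and the hypotheses $\|\widetilde z\|_2\le R$, $r\in(0,R]$ match the requirements of that proposition with $z$ replaced by $\widetilde z$. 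This yields in one stroke both the first inequality $\frac{1}{\sqrt{1+R^2}}\mathrm{KKT}(\widetilde z)\le\rho_r(\widetilde z)$ and the bound $\rho_r(\widetilde z)\le\mathrm{dist}_2(0,\mathcal F(\widetilde z))$, which I keep as the launching point for the middle inequality.

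For the middle inequality I would use $P(z-\widetilde z)\in\mathcal F(\widetilde z)$: since the infimal subdifferential size is the smallest $\ell_2$ norm over $\mathcal F(\widetilde z)$, the particular element $P(z-\widetilde z)$ gives $\mathrm{dist}_2(0,\mathcal F(\widetilde z))\le\|P(z-\widetilde z)\|_2$. It then remains to convert the $\ell_2$ operator action into the $P$-norm via the spectral bound $\|Pv\|_2\le\sqrt{\sigma_{\operatorname{max}}(P)}\,\|v\|_P$, which holds because $\|Pv\|_2^2=v^\top P^2v\le\sigma_{\operatorname{max}}(P)\,v^\top Pv=\sigma_{\operatorname{max}}(P)\,\|v\|_P^2$ (diagonalize $P$ and use $\lambda_i\le\sigma_{\operatorname{max}}(P)$ for each eigenvalue $\lambda_i$). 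Taking $v=z-\widetilde z$ delivers $\rho_r(\widetilde z)\le\sqrt{\sigma_{\operatorname{max}}(P)}\,\|z-\widetilde z\|_P$.

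For the final inequality I would bound the fixed-point residual by the distance to optimality. Fixing any $z^*\in\mZ^*$ and using that $z^*$ is a fixed point of $\mathrm{PDHG}$, non-expansiveness in the $P$-norm gives $\|\widetilde z-z^*\|_P=\|\mathrm{PDHG}(z)-\mathrm{PDHG}(z^*)\|_P\le\|z-z^*\|_P$, so the triangle inequality yields $\|z-\widetilde z\|_P\le\|z-z^*\|_P+\|\widetilde z-z^*\|_P\le 2\|z-z^*\|_P$. Converting back with the companion spectral bound $\|v\|_P\le\sqrt{\sigma_{\operatorname{max}}(P)}\,\|v\|_2$ and then taking the infimum over $z^*\in\mZ^*$ produces $\sqrt{\sigma_{\operatorname{max}}(P)}\,\|z-\widetilde z\|_P\le 2\sigma_{\operatorname{max}}(P)\,\mathrm{dist}_2(z,\mZ^*)$, which closes the chain.

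The argument is essentially an assembly, so there is no deep obstacle; the part requiring the most care is the repeated passage between the $\ell_2$ and $P$ geometries, where one must keep the two factors of $\sqrt{\sigma_{\operatorname{max}}(P)}$ straight—one arising from bounding $\|Pv\|_2$ and one from $\|v\|_P\le\sqrt{\sigma_{\operatorname{max}}(P)}\,\|v\|_2$—so that they combine into the stated constant $2\sigma_{\operatorname{max}}(P)$. The one substantive external ingredient is the $P$-norm non-expansiveness of $\mathrm{PDHG}$, and this is precisely where the strict step-size condition $\eta<1/\|A\|_2$ enters: it guarantees that $P$ is positive definite, so that $\|\cdot\|_P$ is a genuine norm and $\mathrm{PDHG}$ is a (firmly) non-expansive operator in that geometry, the operator-theoretic viewpoint developed in Section~\ref{sec:operator}.
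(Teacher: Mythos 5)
Your proof is correct and follows essentially the same route the paper intends: it applies Proposition~\ref{prop:chain} at $\widetilde z$, uses the inclusion $P(z-\widetilde z)\in\mathcal F(\widetilde z)$ together with the spectral bound $\|Pv\|_2\le\sqrt{\sigma_{\operatorname{max}}(P)}\,\|v\|_P$ for the middle inequality, and closes with non-expansiveness of $\mathrm{PDHG}$ in the $P$-norm plus $\|v\|_P\le\sqrt{\sigma_{\operatorname{max}}(P)}\,\|v\|_2$. The paper states this proposition as an immediate consequence of Proposition~\ref{prop:chain} and the inclusion $P(z-\widetilde z)\in\mathcal F(\widetilde z)$ without spelling out the norm conversions or the factor-of-two argument, so your write-up simply supplies the details the paper leaves implicit.
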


\subsection{A Simplified Viewpoint of PDHG for LP}\label{sec:unified}

In this section, we present a perspective on understanding PDHG which can lead to a simplified convergence analysis of its sublinear convergence rate for both the average and last iterates. 
Additionally, we demonstrate that both PPM and ADMM can be viewed as special cases of a generic algorithm, distinguished by their choice of norm.  This unified viewpoint highlights the fundamental connections between these methods, offering principle insights into their theoretical underpinnings.

We assume the LP instances are feasible and bounded in this and the next three sections, and we will discuss infeasibility detection in Section \ref{sec:infeas}. For notational simplicity, we denote $z=(x,y)$ as the primal-dual solution pair and $\mathcal F(z)=\mathcal F(x,y)=\begin{pmatrix}
    \partial_x L(x,y) \\ -\partial_{y}  L(x,y)
\end{pmatrix}$ as the sub-differential of the objective (more precisely, the sub-gradient over the primal variable $x$ and the negative sub-gradient over the dual variable $y$). Denote $\|z\|_{P}=\sqrt{\langle z,Pz\rangle}$ for any positive semi-definite matrix $P$. Denote $\mZ^*$ is the optimal solution set and $\mathrm{dist}_P(z,\mZ^*)=\min_{z^*\in\mZ^*}\|z-z^*\|_P$ is the distance between $z$ and $\mZ^*$. For simplicity of notation, denote $\mathrm{dist}(z,\mZ^*)=\min_{z^*\in\mZ^*}\|z-z^*\|_2$.

\begin{prop}[\cite{lu2022infimal,lu2023unified}]\label{prop:pdhg}
    The update rule of PDHG iterations \eqref{eq:pdhg-thm} can be rewritten as %$P(z^k-z^{k+1})\in\mathcal F(z^{k+1})$
    \begin{equation}
        P(z^k-z^{k+1})\in\mathcal F(z^{k+1}) \ ,
    \end{equation}
    with $P=\begin{pmatrix}
        \frac{1}{\eta}I & A^\top \\ A & \frac{1}{\eta}I
    \end{pmatrix}$, $\mathcal F(z)=\mathcal F(x,y)=\begin{pmatrix}
    c-A^\top y+\partial \iota_{\mathbb R_+^n}(x) \\ Ax-b
    \end{pmatrix}$  and $\iota_{\mathbb R_+^n}(x)=\begin{cases}
        0, & x\in \mathbb R_+^n\\ +\infty, & \mathrm{otherwise}
    \end{cases}$ is the indicator of positive orthant and $\partial \iota_{\mathbb R_+^n}(x)$ is the sub-differential of indicator function of positive orthant.
\end{prop}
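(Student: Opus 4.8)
The plan is to verify the claimed inclusion block by block, treating the primal ($x$) and dual ($y$) components of $P(z^k-z^{k+1})\in\mathcal F(z^{k+1})$ separately, since the first block of $\mathcal F$ is genuinely set-valued (it contains the subdifferential $\partial\iota_{\mathbb R_+^n}$) while the second block $Ax-b$ is single-valued, so there the inclusion is really an equality. The one structural fact I would invoke at the outset is that the Euclidean projection onto a closed convex set is the proximal operator of its indicator: the identity $x^{k+1}=\text{proj}_{\mathbb R_+^n}(w)$ is equivalent to the first-order optimality condition $w-x^{k+1}\in\partial\iota_{\mathbb R_+^n}(x^{k+1})$, i.e. $w-x^{k+1}$ lies in the normal cone of $\mathbb R_+^n$ at $x^{k+1}$.

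For the primal block, I would apply this characterization with $w=x^k+\eta A^\top y^k-\eta c$, which gives $x^k-x^{k+1}+\eta A^\top y^k-\eta c\in\partial\iota_{\mathbb R_+^n}(x^{k+1})$. The goal is to show the first block of $P(z^k-z^{k+1})$, namely $\tfrac1\eta(x^k-x^{k+1})+A^\top(y^k-y^{k+1})$, lies in the first block of $\mathcal F(z^{k+1})$, namely $c-A^\top y^{k+1}+\partial\iota_{\mathbb R_+^n}(x^{k+1})$. Moving the explicit $c-A^\top y^{k+1}$ terms to the other side, the $A^\top y^{k+1}$ supplied by $\mathcal F$ cancels the $-A^\top y^{k+1}$ hidden inside $A^\top(y^k-y^{k+1})$, leaving only $y^k$; this reduces the requirement to $\tfrac1\eta(x^k-x^{k+1})+A^\top y^k-c\in\partial\iota_{\mathbb R_+^n}(x^{k+1})$. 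Multiplying through by $\eta>0$ and using that the normal cone is positively homogeneous (so scaling by $\eta$ leaves $\partial\iota_{\mathbb R_+^n}$ unchanged) recovers exactly the projection optimality condition from the previous step.

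For the dual block, the verification is a direct equality. I would substitute the explicit dual update $y^{k+1}=y^k-\eta A(2x^{k+1}-x^k)+\eta b$ to obtain $\tfrac1\eta(y^k-y^{k+1})=A(2x^{k+1}-x^k)-b$, and then add the remaining part of the second block of $P(z^k-z^{k+1})$, namely $A(x^k-x^{k+1})$. The term $Ax^k$ cancels and one copy of $Ax^{k+1}$ survives, leaving precisely $Ax^{k+1}-b$, which is the second block of $\mathcal F(z^{k+1})$.

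I do not expect a genuine obstacle; the proposition is essentially a bookkeeping identity, and the content lies in the reformulation rather than in any estimate. The only two points that require care are (i) the prox/normal-cone reformulation of the projection together with the positive homogeneity of $\partial\iota_{\mathbb R_+^n}$ under scaling by $\eta$, and (ii) recognizing that the extrapolation factor $2x^{k+1}-x^k$ in the dual update is exactly what makes the cross terms telescope, so that the dual block collapses to $Ax^{k+1}-b$ rather than to some combination of $Ax^k$ and $Ax^{k+1}$. These two features are precisely where the off-diagonal blocks $A^\top$ and $A$ of $P$ originate, which is the conceptual takeaway of the proposition.
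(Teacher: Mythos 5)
Your proof is correct: the paper states this proposition without giving a proof (it defers to the cited references), and your block-wise verification—using the normal-cone characterization of the projection for the primal block and direct substitution with the extrapolated point $2x^{k+1}-x^k$ for the dual block—is exactly the standard argument in those references. Both cancellations check out, so there is nothing to add.
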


This reformulation provides a clearer perspective that facilitates the derivation of PDHG's sublinear convergence. The ergodic sublinear convergence of PDHG was first established in \cite{chambolle2011first}, with a simplified proof later provided in \cite{chambolle2016ergodic}. The sublinear convergence for the last iterate is detailed in works such as \cite{davis2015convergence, davis2016convergence,lu2022infimal}. Here, we summarize these results and offer simplified proofs based on the perspective provided by Proposition \ref{prop:pdhg}.

\begin{thm}[Average iterate convergence {\cite{chambolle2011first,chambolle2016ergodic,lu2023unified}}]\label{thm:average}
    Consider PDHG iterates $\{z^k=(x^k,y^k)\}_{k=1,...,\infty}$ obtained from \eqref{eq:pdhg-thm} with step-size $\eta<\frac{1}{\|A\|_2}$ and the initial solution $z^0=(x^0,y^0)$. Let $P=\begin{pmatrix}
        \frac{1}{\eta}I & A^\top \\ A & \frac{1}{\eta}I
    \end{pmatrix}$. Denote $\bar z^k=(\bar x^k,\bar y^k)=\frac{1}{k}\sum_{i=1}^k z^i$ as the average iterate. Then it holds for any $k\ge 1$ and a primal-dual solution $z=(x,y)$ with $x\geq 0$ that
    \begin{equation*}
        L(\bar x^k,y)-L(x,\bar y^k)\leq \frac{1}{2k}\|z-z^0\|^2_{P} \ .
    \end{equation*}
\end{thm}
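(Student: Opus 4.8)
The plan is to exploit the operator reformulation of Proposition \ref{prop:pdhg}, namely $P(z^k-z^{k+1})\in\mathcal F(z^{k+1})$, together with the convex-concave structure of $L$, to derive a per-iteration gap bound that telescopes, and then to finish with Jensen's inequality. Throughout, fix an arbitrary reference point $z=(x,y)$ with $x\geq 0$.

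The first step is the one-step inequality. Write $u^{k+1}:=P(z^k-z^{k+1})=(u_x^{k+1},u_y^{k+1})\in\mathcal F(z^{k+1})$, so that, reading off the two blocks of $\mathcal F$ in Proposition \ref{prop:pdhg}, $u_x^{k+1}$ is a subgradient of the convex map $L(\cdot,y^{k+1})$ at $x^{k+1}$ (this block carries the indicator subdifferential $\partial\iota_{\mathbb R_+^n}$), while $-u_y^{k+1}$ is a subgradient of the concave map $L(x^{k+1},\cdot)$ at $y^{k+1}$. The defining subgradient inequalities then read
\begin{equation*}
L(x,y^{k+1})\geq L(x^{k+1},y^{k+1})+\langle u_x^{k+1},x-x^{k+1}\rangle,\qquad L(x^{k+1},y)\leq L(x^{k+1},y^{k+1})-\langle u_y^{k+1},y-y^{k+1}\rangle .
\end{equation*}
Adding these so that the bilinear cross terms cancel yields $L(x^{k+1},y)-L(x,y^{k+1})\leq\langle u^{k+1},z^{k+1}-z\rangle=\langle P(z^k-z^{k+1}),z^{k+1}-z\rangle$.

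Next I would apply the three-point identity in the $P$-inner product,
\begin{equation*}
\langle P(z^k-z^{k+1}),z^{k+1}-z\rangle=\frac{1}{2}\left(\|z^k-z\|_P^2-\|z^{k+1}-z\|_P^2-\|z^k-z^{k+1}\|_P^2\right).
\end{equation*}
Since $\eta<\frac{1}{\|A\|_2}$ makes $P$ positive definite, the last term is nonnegative and can be dropped, leaving the recursion $L(x^{k+1},y)-L(x,y^{k+1})\leq\frac{1}{2}(\|z^k-z\|_P^2-\|z^{k+1}-z\|_P^2)$. Summing over $k=0,\dots,K-1$ telescopes the right-hand side, and dropping the nonnegative $\|z^K-z\|_P^2$ gives $\sum_{i=1}^{K}\big(L(x^i,y)-L(x,y^i)\big)\leq\frac{1}{2}\|z^0-z\|_P^2$.

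Finally, Jensen's inequality closes the argument: convexity of $L(\cdot,y)$ gives $L(\bar x^K,y)\leq\frac{1}{K}\sum_{i=1}^K L(x^i,y)$ and concavity of $L(x,\cdot)$ gives $L(x,\bar y^K)\geq\frac{1}{K}\sum_{i=1}^K L(x,y^i)$, so the averaged gap is bounded by $\frac{1}{K}$ times the telescoped sum, which is exactly $\frac{1}{2K}\|z-z^0\|_P^2$. I expect the only real subtlety to be the one-step inequality: correctly identifying the two subgradient blocks from the inclusion $P(z^k-z^{k+1})\in\mathcal F(z^{k+1})$ (in particular handling the indicator subdifferential in the primal block) and tracking signs so the cross terms cancel. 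Once that per-step bound is secured, the three-point identity, the telescoping, and the Jensen step are all routine.
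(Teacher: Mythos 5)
Your proposal is correct and follows essentially the same route as the paper's own proof: the one-step gap bound obtained from the inclusion $P(z^k-z^{k+1})\in\mathcal F(z^{k+1})$ combined with convexity-concavity of $L$, the three-point identity in the $P$-inner product, dropping the nonnegative $\|z^k-z^{k+1}\|_P^2$ term, telescoping, and finishing with Jensen's inequality. No gaps to report.
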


\begin{proof}
Denote $w^{k+1}=P(z^k-z^{k+1})\in \mathcal F(z^{k+1})$. It then holds by convexity-concavity of $L(x,y)$ that
\begin{align}\label{eq:bound-gap}
    \begin{split}
        & L(x^{k+1},y)- L(x,y^{k+1}) \ = L(x^{k+1},y)-L(x^{k+1},y^{k+1})+L(x^{k+1},y^{k+1})-L(x,y^{k+1}) \\
         \le & \left\langle w^{k+1}, z^{k+1}-z \right\rangle \ = \left\langle z^{k}-z^{k+1}, z^{k+1}-z \right\rangle_{P} = \frac{1}{2}\|z^{k}-z\|_{P}^2-\frac{1}{2}\|z^{k+1}-z\|_{P}^2-\frac{1}{2}\|z^{k}-z^{k+1}\|_{P}^2 \\ 
         \ \leq & \frac{1}{2}\|z^{k}-z\|_{P}^2-\frac{1}{2}\|z^{k+1}-z\|_{P}^2 \ ,
    \end{split}
\end{align}
where the last inequality follows from the fact that $\|\cdot\|_{P}$ is a semi-norm. We finish the proof by noticing
\begin{align*}
    \begin{split}
        L(\bar x^k,y)-L(x,\bar y^k)\leq \frac 1k \sum_{i=0}^{k-1}L(x^{i+1},y)-L(x,y^{i+1}) \leq \frac{1}{2 k}\|z^{0}-z\|_{P}^2 \ ,
    \end{split}
\end{align*}
where the first inequality comes from convexity-concavity of $L(x,y)$ and the telescoping using \eqref{eq:bound-gap}.
\end{proof}

Theorem \ref{thm:average} demonstrates that the primal-dual gap at the average solution decays at the rate of $O(1/k)$, where $k$ is the number of PDHG iterations. This directly implies the $O(1/k)$ sublinear convergence of normalized duality gap~\cite{applegate2023faster}. The next theorem demonstrates that the primal-dual gap at the last iteration of PDHG decays at the rate of $O(1/\sqrt{k})$.

\begin{thm}[Last iterate convergence \cite{davis2015convergence,davis2016convergence,lu2022infimal,lu2023unified}]\label{thm:last}
Consider PDHG iterates $\{z^k=(x^k,y^k)\}_{k=1,...,\infty}$ obtained from \eqref{eq:pdhg-thm} with step-size $\eta<\frac{1}{\|A\|_2}$ and the initial solution $z^0=(x^0,y^0)$. Let $P=\begin{pmatrix}
        \frac{1}{\eta}I & A^\top \\ A & \frac{1}{\eta}I
    \end{pmatrix}$. Then it holds for any $k\ge 1$ and a primal-dual solution $z=(x,y)$ with $x\geq 0$ that
\begin{equation}\label{eq:last-iteration-residual}
    \Vert z^{k+1}-z^k \Vert_{P} \leq \frac{\Vert z^0-z^* \Vert_{P}}{\sqrt k} \ .
\end{equation}
Furthermore, it holds that
\begin{equation}\label{eq:last-iteration-gap}
    L(x^k,y)-L(x,y^k) \leq \frac{1}{\sqrt k}\pran{\Vert z^0-z^* \Vert_{P}^2+\Vert z^{0}-z^* \Vert_{P}\Vert z^*-z \Vert_{P}} \ .
\end{equation}
\end{thm}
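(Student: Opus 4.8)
The plan is to leverage the operator reformulation in Proposition \ref{prop:pdhg}, which recasts PDHG as the implicit iteration $P(z^k-z^{k+1})\in\mathcal F(z^{k+1})$ for the maximal monotone operator $\mathcal F$ (a skew-symmetric linear map from the bilinear term plus the subdifferential $\partial\iota_{\mathbb R_+^n}$, hence monotone as a sum of monotone pieces). Since $\eta<\frac{1}{\|A\|_2}$ makes $P$ positive definite, $\|\cdot\|_P$ is a genuine norm and Cauchy--Schwarz applies in the $P$-inner product. The whole argument rests on two monotonicity consequences. First, applying monotonicity of $\mathcal F$ to the iterate $z^{k+1}$ and any optimizer $z^*$ (where $0\in\mathcal F(z^*)$) gives $\langle z^k-z^{k+1}, z^{k+1}-z^*\rangle_P\geq 0$; substituting this into the same three-point identity used in the proof of Theorem \ref{thm:average} yields the Fej\'er-type contraction $\|z^{k+1}-z^*\|_P^2\leq \|z^k-z^*\|_P^2-\|z^{k+1}-z^k\|_P^2$. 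In particular $\|z^k-z^*\|_P$ is nonincreasing and the residuals are summable.

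The key lemma, and the part I expect to be the main obstacle, is the monotonicity of the fixed-point residual itself, $\|z^{k+1}-z^k\|_P\leq \|z^k-z^{k-1}\|_P$. To obtain it I would apply monotonicity of $\mathcal F$ to the two consecutive iterates, using $P(z^{k-1}-z^k)\in\mathcal F(z^k)$ and $P(z^k-z^{k+1})\in\mathcal F(z^{k+1})$. Writing $u^{k}=z^{k-1}-z^k$, monotonicity reads $\langle Pu^{k+1}-Pu^{k}, z^{k+1}-z^k\rangle\geq 0$; since $z^{k+1}-z^k=-u^{k+1}$, this rearranges to $\|u^{k+1}\|_P^2\leq \langle u^k,u^{k+1}\rangle_P$, and Cauchy--Schwarz then gives $\|u^{k+1}\|_P\leq\|u^k\|_P$. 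With the residuals nonincreasing, summing the Fej\'er inequality over $i=0,\dots,k-1$ telescopes to $\sum_{i=0}^{k-1}\|z^{i+1}-z^i\|_P^2\leq\|z^0-z^*\|_P^2$, and bounding every term from below by the smallest (last) one yields $k\,\|z^k-z^{k-1}\|_P^2\leq\|z^0-z^*\|_P^2$, which gives \eqref{eq:last-iteration-residual}.

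For the gap bound \eqref{eq:last-iteration-gap}, I would recycle the one-step estimate already derived inside the proof of Theorem \ref{thm:average}, namely $L(x^k,y)-L(x,y^k)\leq \langle z^{k-1}-z^k, z^k-z\rangle_P$, which follows from convexity--concavity of $L$ together with $P(z^{k-1}-z^k)\in\mathcal F(z^k)$. Cauchy--Schwarz splits the right-hand side as $\|z^{k-1}-z^k\|_P\,\|z^k-z\|_P$; the first factor is controlled by the residual bound just proved, and the second by the triangle inequality and Fej\'er monotonicity, $\|z^k-z\|_P\leq\|z^k-z^*\|_P+\|z^*-z\|_P\leq\|z^0-z^*\|_P+\|z^*-z\|_P$. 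Multiplying the two bounds produces exactly $\frac{1}{\sqrt k}\bigl(\|z^0-z^*\|_P^2+\|z^0-z^*\|_P\|z^*-z\|_P\bigr)$. The only genuinely new ingredient beyond Theorem \ref{thm:average} is the residual-monotonicity lemma, so I would concentrate the care there, in particular on justifying the \emph{full} monotonicity of $\mathcal F$ between two arbitrary iterates, rather than the weaker convexity--concavity estimate that sufficed for the averaged rate.
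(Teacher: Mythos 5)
Your proposal is correct and follows essentially the same route as the paper's proof: the Fej\'er-type contraction obtained from the one-step inequality at $z=z^*$, telescoping to get summability of the residuals, residual monotonicity to bound the last residual by the average (yielding \eqref{eq:last-iteration-residual}), and then the one-step gap estimate combined with Cauchy--Schwarz, the triangle inequality, and Fej\'er monotonicity to get \eqref{eq:last-iteration-gap}. The only difference is that you supply a direct proof of the residual-monotonicity lemma from monotonicity of $\mathcal F$ applied to consecutive iterates, whereas the paper cites this fact to \cite{lu2022infimal}; your argument for it is the standard one and is correct.
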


\begin{proof}
From \eqref{eq:bound-gap} we have
    \begin{equation*}
        \Vert z^{k+1}-z^k \Vert_{P}^2 \leq \Vert z^k-z^* \Vert_{P}^2-\Vert z^{k+1}-z^* \Vert_{P}^2 \ ,
    \end{equation*}
and thus it holds by telescoping that
    \begin{equation}\label{eq:sublinear}
        \Vert z^{k+1}-z^k \Vert_{P}^2 \leq \frac{1}{k}\sum_{i=0}^{k-1}\Vert z^{i+1}-z^i \Vert_{P}^2 \leq \frac{1}{k}\sum_{i=0}^{k-1}\Vert z^i-z^* \Vert_{P}^2-\Vert z^{i+1}-z^* \Vert_{P}^2 \leq \frac{\Vert z^0-z^* \Vert_{P}^2}{k} \ .
    \end{equation}
where the first inequality follows monotonicity of $\|z^i-z^{i+1}\|_P$~\cite{lu2022infimal}. We finish the proof using convexity-concavity of $L(x,y)$ by noticing that
    \begin{align}
    \begin{split}
        & L(x^{k+1},y)- L(x,y^{k+1}) \ = L(x^{k+1},y)-L(x^{k+1},y^{k+1})+L(x^{k+1},y^{k+1})-L(x,y^{k+1}) \\
         \le & \left\langle w^{k+1}, z^{k+1}-z \right\rangle \ = \left\langle z^{k}-z^{k+1}, z^{k+1}-z \right\rangle_{P}\leq \|z^{k}-z^{k+1}\|_{P}\|z^{k+1}-z\|_P\\
         \leq & \frac{1}{\sqrt k}\Vert z^0-z^* \Vert_{P}(\Vert z^{k+1}-z^* \Vert_{P}+\Vert z^*-z \Vert_{P})\leq \frac{1}{\sqrt k}\pran{\Vert z^0-z^* \Vert_{P}^2+\Vert z^{0}-z^* \Vert_{P}\Vert z^*-z \Vert_{P}}
    \end{split}
\end{align}
where the second inequality is Cauchy-Schwarz inequality and the third inequality uses \eqref{eq:sublinear}.
\end{proof}

In the analysis of the last iteration convergence of PDHG (Theorem \ref{thm:last}), while the eventual result is on the $O(1/\sqrt{k})$ rate of primal-dual gap \eqref{eq:last-iteration-gap}, it is a byproduct of the $O(\frac{1}{\sqrt{k}})$ rate of the fixed-point residual movement.

Theorem \ref{thm:average} and Theorem \ref{thm:last} are not limited to LP, and work more generally for convex-concave primal-dual problems. In such case, the average iterate of PDHG converges faster than the last-iteration of PDHG, which is the case for many primal-dual algorithms, such as PPM, extra-gradient method (EGM), and ADMM \cite{tseng1995linear,nemirovski2004prox,boyd2011distributed,eckstein1992douglas,he20121}. This faster convergence for the average iterate can be attributed to the oscillatory and/or spiral behavior of primal-dual iterates  (see, for example, Figure \ref{fig:pdhg-gda}), where averaging cancels out oscillations, yielding a relatively faster convergence rate compared to the last iteration.

Indeed PDHG falls into a generic class of algorithms with the following iterate update rule:
\begin{equation}\label{eq:generic}
    z^{k+1}\leftarrow\mathrm{GenericALG}(z^k): P(z^k-z^{k+1})\in\mathcal F(z^{k+1}) \ ,
\end{equation}
where $P\in \mathbb R^{(m+n)\times (m+n)}$ is a positive semi-definite matrix. Proposition \ref{prop:pdhg} claims PDHG is an instance of generic algorithm \eqref{eq:generic} with a specific choice of matrix $P$. Other proper choices of positive semi-definite matrix $P$ give rise to different algorithms. For example, PPM \eqref{eq:ppm} is an instance of \eqref{eq:generic} with $P=\begin{pmatrix}
        \frac{1}{\eta}I & 0 \\ 0 & \frac{1}{\eta}I
    \end{pmatrix}$ and $\mathcal F(z)=\mathcal F(x,y)=\begin{pmatrix}
    c-A^\top y+\partial \iota_{\mathbb R_+^n}(x) \\ Ax-b
\end{pmatrix}$. 
This demonstrates that PDHG can be viewed as a preconditioned variant of PPM with the $P$-norm. The connection between PDHG and PPM was first documented in~\cite{he2012convergence}, while the equivalence between PDHG and Douglas-Rachford Splitting (DRS) was established in~\cite{o2020equivalence}. Unified perspectives on various primal-dual splitting algorithms have also been extensively studied in~\cite{davis2015convergence,davis2016convergence,lu2022infimal,lu2023unified}, providing a comprehensive framework for understanding their relationships and theoretical properties. The results of Theorem \ref{thm:average} and Theorem \ref{thm:last} can be applied to the generic algorithm \eqref{eq:generic}, thus offering a unified and simplified proof for the convergence of PPM and ADMM (see \cite{lu2023unified,lu2022infimal} for a detailed discussion).

\subsection{Firmly Non-Expansive Operator, Halpern Iteration and Reflection}\label{sec:operator}
It is often insightful to view iterative algorithms like PDHG from a non-expansive operator perspective, framing them as iterations to find a fixed point of an operator. Specifically, for PDHG applied to solving LP, recall $\mathrm{PDHG}(\cdot)$ denote the operator induced by the PDHG iteration~\eqref{eq:pdhg-thm} for solving~\eqref{eq:minmax}:
\begin{equation*}
    z^{k+1}=\mathrm{PDHG}(z^k) =\begin{pmatrix}
        \text{proj}_{\mathbb R^n_+}(x^k+\eta A^\top y^k-\eta c) \\ 
        y^k-\eta A(2x^{k+1}-x^k)+\eta b
    \end{pmatrix}\ ,
\end{equation*}
where $z^k=(x^k,y^k)\in \mathbb R^{m+n}$.

The following lemma establishes that PDHG operator for solving~\eqref{eq:minmax} is a firmly non-expansive operator~\cite{bauschke2019convex,ryu2022large}, which demonstrates the contraction property of the PDHG operator, and provides a theoretical foundation for its convergence behavior.

\begin{lem}[\cite{applegate2024infeasibility,bauschke2019convex,ryu2022large}]\label{lem:operator}
    The operator induced by PDHG iteration \eqref{eq:pdhg-thm} with step-size $\eta<\frac{1}{\|A\|_2}$ for solving \eqref{eq:minmax}, denote as $\mathrm{PDHG}(\cdot)$, is firmly non-expansive with respect to norm $\|\cdot\|_P$ where $P=\begin{pmatrix}
        \frac{1}{\eta}I & A^\top \\ A & \frac{1}{\eta}I
    \end{pmatrix}$, namely, for any $u,v\in \mathbb R^{m+n}$,
    \begin{equation*}
        \big\|\mathrm{PDHG}(u)-\mathrm{PDHG}(v)\big\|_P^2\leq \big\|u-v\big\|_P^2-\big\|(\mathrm{PDHG}(u)-u)-(\mathrm{PDHG}(v)-v)\big\|_P^2 \ .
    \end{equation*}
\end{lem}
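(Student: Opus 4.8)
The plan is to recognize the firm non-expansiveness as the abstract statement that the map $z\mapsto\mathrm{PDHG}(z)$ is the resolvent of a monotone operator measured in the $\|\cdot\|_P$ metric, after which the proof collapses to a one-line resolvent computation. The key enabler is Proposition \ref{prop:pdhg}, which rewrites a single step of \eqref{eq:pdhg-thm} as $P(z-\mathrm{PDHG}(z))\in\mathcal F(\mathrm{PDHG}(z))$ with $\mathcal F(x,y)=\begin{pmatrix} c-A^\top y+\partial \iota_{\mathbb R_+^n}(x) \\ Ax-b\end{pmatrix}$. Granting this identity, the entire argument rests on two facts: that $P$ is genuinely positive definite, so that $\|\cdot\|_P$ is a bona fide norm, and that $\mathcal F$ is a monotone operator with respect to the Euclidean inner product.

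First I would verify $P\succ 0$. Viewing $P$ as a $2\times 2$ block matrix with diagonal blocks $\tfrac1\eta I$ and off-diagonal blocks $A^\top$ and $A$, its Schur complement relative to the top-left block is $\tfrac1\eta I-\eta AA^\top$, which is positive definite precisely when $\eta^2\|A\|_2^2<1$. This is exactly where the step-size restriction $\eta<\tfrac1{\|A\|_2}$ is needed. Next I would check monotonicity of $\mathcal F$. Taking $w_i\in\mathcal F(z_i)$ at $z_i=(x_i,y_i)$, so that $w_i=(c-A^\top y_i+g_i,\,Ax_i-b)$ with $g_i\in\partial\iota_{\mathbb R_+^n}(x_i)$, and expanding $\langle w_1-w_2,\,z_1-z_2\rangle$, the two bilinear coupling terms $-\langle A^\top(y_1-y_2),x_1-x_2\rangle$ and $\langle A(x_1-x_2),y_1-y_2\rangle$ cancel by the adjoint identity, leaving $\langle g_1-g_2,\,x_1-x_2\rangle\ge 0$ by monotonicity of the subdifferential of the positive-orthant indicator. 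Hence $\mathcal F$ is monotone.

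With both ingredients in place, I would set $d=u-v$ and $d^+=\mathrm{PDHG}(u)-\mathrm{PDHG}(v)$ and apply monotonicity of $\mathcal F$ to the two inclusions $P(u-\mathrm{PDHG}(u))\in\mathcal F(\mathrm{PDHG}(u))$ and $P(v-\mathrm{PDHG}(v))\in\mathcal F(\mathrm{PDHG}(v))$. Because $P$ is symmetric, $\langle Pa,b\rangle=\langle a,b\rangle_P$, so the Euclidean monotonicity inequality transports directly into the $P$-metric and reads $\langle d-d^+,\,d^+\rangle_P\ge 0$, i.e. $\|d^+\|_P^2\le\langle d,d^+\rangle_P$. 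Finally, writing the right-hand correction term as $\|(\mathrm{PDHG}(u)-u)-(\mathrm{PDHG}(v)-v)\|_P^2=\|d^+-d\|_P^2$ and expanding, the claimed inequality is seen to be algebraically equivalent to $2\|d^+\|_P^2\le 2\langle d,d^+\rangle_P$, which is exactly the monotonicity inequality just derived. The closing step is therefore pure algebra.

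I expect the main obstacle to be conceptual rather than computational: setting up the correct abstraction so that the somewhat opaque two-sided firm-non-expansiveness inequality is recognized as the standard resolvent identity once everything is phrased in the $\|\cdot\|_P$ geometry. The one subtlety requiring care is that $\mathcal F$ is monotone with respect to the Euclidean inner product, whereas the contraction is measured in $\|\cdot\|_P$; the bridge is precisely the symmetry of $P$, which converts $\langle P(\cdot),\cdot\rangle$ into $\langle\cdot,\cdot\rangle_P$ without loss. Verifying the cancellation of the skew-symmetric coupling terms in $\mathcal F$ is the only place where the specific structure of the LP saddle function is used, and it is where I would be most careful to keep signs and adjoints correct.
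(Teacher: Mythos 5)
Your proposal is correct and follows essentially the same route the paper relies on (via its citations and its own framework): Proposition \ref{prop:pdhg} exhibits $\mathrm{PDHG}$ as the resolvent of the monotone operator $\mathcal F$ in the $\|\cdot\|_P$ geometry, with $\eta<1/\|A\|_2$ guaranteeing $P\succ 0$, and firm non-expansiveness then follows from the standard monotonicity argument $\|d^+\|_P^2\leq\langle d,d^+\rangle_P$, exactly as in the cited literature. Your algebraic reduction of the two-sided inequality to this inner-product bound, and your verification of the skew-term cancellation in the monotonicity of $\mathcal F$, are both accurate.
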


Firm non-expansiveness immediately implies a bracket of convergence guarantees for PDHG \cite{bauschke2017correction}, for example:
\begin{itemize}
    \item Non-expansiveness: for any $u,v\in \mathbb R^{m+n}$, $\big\|\mathrm{PDHG}(u)-\mathrm{PDHG}(v)\big\|_P\leq \big\|u-v\big\|_P$;
    \item Convergence: there exists a $z^*$ such that $\lim_{k\rightarrow\infty}z^k=z^*$;
    \item Contraction: for any $z^*\in \mZ^*$, it holds for any $k\geq 0$ that $\|z^{k+1}-z^*\|_P \leq \|z^k-z^*\|_P$.
    \item Boundedness: there exists a constant $R>0$ such that for any $k\geq 0$, $\|z^k\|_2\leq R$.
\end{itemize}

Halpern iteration~\cite{halpern1967fixed,lieder2021convergence,diakonikolas2020halpern,park2022exact,kim2021accelerated} is a scheme designed to accelerate non-expansive fixed-point algorithms. Halpern PDHG anchors to the initial solution, and takes a weighted average between the PDHG step at the current iterate and the initial point, and the update rule for Halpern PDHG is given by: 
    \begin{equation*}
        z^{k+1}%=\text{H-PDHG}(z^k;z^0):
        =\frac{k+1}{k+2}\mathrm{PDHG}(z^k)+\frac{1}{k+2}z^0 \ ,
    \end{equation*}
while Figure \ref{fig:hpdhg} visualizes the update rule of Halpern PDHG.

    \begin{figure}
    \begin{subfigure}{0.45\textwidth}
    \centering
    \begin{tikzpicture}
    \tikzstyle{every node}=[font=\large]    
    \filldraw[red] (2,2) circle (2pt) node[left] {$ z^0$};
    \filldraw[red] (0.6,0.5) circle (2pt) node[left] {$z^k$};
    \filldraw[blue] (2,0) circle (2pt) node[below] {PDHG($z^k$)};
    \draw[magenta, dotted, thick] (0.6,0.5) -- (2,0);
    \draw[purple, dotted, thick] (2,2) -- (2,0);
    \filldraw[red] (2,0.3) circle (2pt) node[right] {$z^{k+1}$};
\end{tikzpicture}
\caption{Halpern PDHG}
\label{fig:hpdhg}
\end{subfigure}
    \begin{subfigure}{0.45\textwidth}
    \centering
    \begin{tikzpicture}
    \tikzstyle{every node}=[font=\large]    
    \filldraw[red] (2,2) circle (2pt) node[left] {$ z^0$};
    \filldraw[red] (0.6,0.5) circle (2pt) node[left] {$z^k$};
    \filldraw[blue] (2,0) circle (2pt) node[above] {PDHG($z^k$)};
    \draw[magenta, dotted, thick] (0.6,0.5) -- (3.4,-0.5);
    \filldraw[cyan] (3.4,-0.5) circle (2pt) node[below] {2PDHG($z^k$)-$z^k$};
    \draw[purple, dotted, thick] (2,2) -- (3.4,-0.5);
    \filldraw[red] (3.2,-0.14) circle (2pt) node[right] {$z^{k+1}$};
\end{tikzpicture}
\caption{reflected Halpern PDHG}
\label{fig:rhpdhg}
\end{subfigure}
\caption{Illustration of update schemes of Halpern PDHG and reflected Halpern PDHG}
\end{figure}
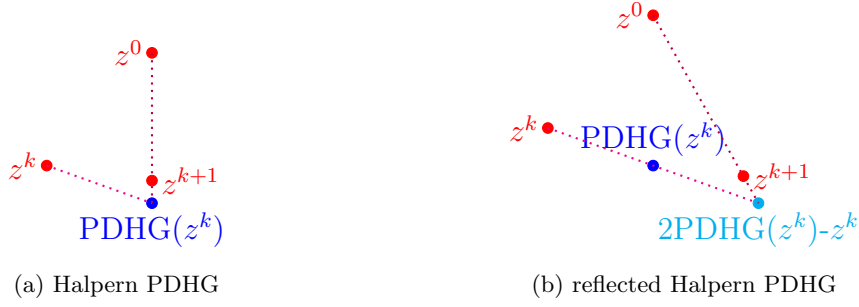

    In light of the non-expansiveness of PDHG operator (Lemma \ref{lem:operator}), the Halpern scheme can speed up the convergence behaviors of PDHG for solving \eqref{eq:minmax}. 
\begin{thm}[{\cite[Theorem 2.1]{lieder2021convergence}}]\label{thm:sublinear-h}
    Consider $\{z^{k}\}$ the Halpern PDHG iterates on solving \eqref{eq:minmax} via update rules \eqref{eq:hpdhg} with step-size $\eta<\frac{1}{\|A\|_2}$.
    Denote $z^*\in \mathcal Z^*:=\{z^*\mid \mathrm{PDHG}(z^*)-z^*=0\}$. Then it holds for any $k\geq 1$ that
    \begin{equation*}
        \left\|z^k-\mathrm{PDHG}(z^k)\right\|_P\leq \frac{2}{k+1}\mathrm{dist}_P(z^0,\mathcal Z^*) \ .
    \end{equation*}
\end{thm}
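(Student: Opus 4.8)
The plan is to isolate the only problem-specific ingredient, namely that $\mathrm{PDHG}(\cdot)$ is non-expansive in the $\|\cdot\|_P$ norm, and then invoke the general convergence theory of the Halpern iteration for non-expansive operators. Lemma~\ref{lem:operator} already establishes that $\mathrm{PDHG}$ is firmly non-expansive with respect to $\|\cdot\|_P$, which in particular yields the plain non-expansiveness $\|\mathrm{PDHG}(u)-\mathrm{PDHG}(v)\|_P\le \|u-v\|_P$; moreover, since the LP is assumed feasible and bounded, the fixed-point set $\mathcal Z^*$ is nonempty, so $\mathrm{dist}_P(z^0,\mathcal Z^*)$ is finite and attained. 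With these in hand, the statement reduces to the abstract claim of \cite{lieder2021convergence} for an arbitrary non-expansive map in the Hilbert space equipped with the inner product $\langle u,v\rangle_P=\langle u,Pv\rangle$.

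To make the argument self-contained I would reproduce the abstract induction. Writing $T=\mathrm{PDHG}$ and $\lambda_k=\tfrac{1}{k+2}$, the update \eqref{eq:hpdhg} reads $z^{k+1}=\lambda_k z^0+(1-\lambda_k)T z^k$. Two preliminary facts come first: (i) boundedness, $\|z^k-z^*\|_P\le \|z^0-z^*\|_P$ for every $z^*\in\mathcal Z^*$, proved by induction directly from the convex-combination form of the update and non-expansiveness (using $Tz^*=z^*$); and (ii) the algebraic identity
\begin{equation*}
z^{k+1}-Tz^{k+1}=(1-\lambda_k)\bigl(Tz^k-Tz^{k+1}\bigr)+\lambda_k\bigl(z^0-Tz^{k+1}\bigr)\ ,
\end{equation*}
obtained by substituting the definition of $z^{k+1}$ and regrouping. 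Identity (ii) relates the residual $r_{k+1}:=z^{k+1}-Tz^{k+1}$ to the displacement $Tz^k-Tz^{k+1}$, which in turn is controlled by $\|z^k-z^{k+1}\|_P$ through non-expansiveness.

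The core step is to turn these relations into a decaying bound on $\|r_k\|_P$. A convenient intermediate fact is that the residual norm $\|z^k-Tz^k\|_P$ is non-increasing in $k$, a standard consequence of non-expansiveness for anchored iterations. I would then introduce a potential encoding the Halpern weights — for instance a weighted square such as $(k+1)^2\|r_k\|_P^2$ — and, applying the firm non-expansiveness inequality of Lemma~\ref{lem:operator} to the pair $(z^k,z^{k+1})$ together with identity (ii), show that this potential satisfies a telescoping (monotone) inequality in $k$. Summing from the initial index and using the boundedness in (i) to bound the endpoint term by $\|z^0-z^*\|_P$ gives $\|z^k-Tz^k\|_P\le \frac{2}{k+1}\|z^0-z^*\|_P$, and taking the infimum over $z^*\in\mathcal Z^*$ produces the stated $\mathrm{dist}_P(z^0,\mathcal Z^*)$ on the right-hand side.

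The main obstacle is pinning down the exact potential that delivers the sharp constant $2$ rather than a looser $O(1/k)$ factor: the telescoping must exploit the precise weights $\lambda_k=1/(k+2)$ and the \emph{firm} (not merely plain) non-expansiveness in order to cancel the cross terms, and this bookkeeping is the delicate part of Lieder's original argument, which was certified via a performance-estimation (SDP) computation. Since non-expansiveness in $\|\cdot\|_P$ is the sole hypothesis of that theorem and is supplied by Lemma~\ref{lem:operator}, the cleanest route in a survey is to verify this hypothesis and cite \cite{lieder2021convergence} for the tight constant, reproducing the induction above only where a fully self-contained exposition is desired.
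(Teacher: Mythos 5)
Your proposal is correct and follows essentially the same route as the paper: the paper gives no standalone proof of this theorem, but rather verifies the sole hypothesis of Lieder's abstract result---that $\mathrm{PDHG}(\cdot)$ is (firmly) non-expansive with respect to $\|\cdot\|_P$, supplied by Lemma~\ref{lem:operator}---and then invokes \cite[Theorem 2.1]{lieder2021convergence} for the $\frac{2}{k+1}$ bound, exactly as you suggest in your closing paragraph. Your additional sketch of Lieder's induction is a reasonable outline (and you correctly flag that the delicate potential-function bookkeeping yielding the sharp constant $2$ is the part best left to the citation), so no gap remains.
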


Halpern iteration has a similar effect as taking the average. For example, consider a unconstrained bilinear problem $\min_{x,y} c^\top x -y^\top Ax + b^\top y$, then the average iterate of PDHG and the last iteration of Halpern PDHG are identical~\cite{lu2024restarted}. Furthermore, both the average PDHG (Theorem \ref{thm:average}) and Halpern PDHG (Theorem \ref{thm:sublinear-h}) lead to a $O(1/k)$ sublinear convergence rate in KKT residual in the light of Proposition \ref{prop:chain} and Proposition \ref{prop:chain-pdhg}.

Another effective enhancement related to Halpern iteration is reflection. Indeed, the sublinear convergence guarantee of Halpern iteration (Theorem \ref{thm:sublinear-h}) just requires non-expansive operators, which is a weaker condition than firm non-expansiveness that is satisfied by PDHG operator. Indeed, it is well known that for a firmly non-expansive operator $T$, its reflection $2T-I$ is a non-expansive operator \cite{bauschke2019convex}, thus we can apply Halpern on reflected PDHG and provide the following update rule
\begin{equation*}
        z^{k+1}%=\text{H-PDHG}(z^k;z^0):
        =\frac{k+1}{k+2}\bigg(2\mathrm{PDHG}(z^k)-z^k\bigg)+\frac{1}{k+2}z^0 \ .
\end{equation*}
Figure \ref{fig:rhpdhg} illustrates the update scheme of reflected Halpern PDHG. Intuitively, the use of reflection takes a longer step, and it can improve the complexity result by a constant factor of 2 in theory:
\begin{thm}[\cite{lieder2021convergence,park2022exact,lu2024restarted}]\label{thm:sublinear-rh}
    Consider $\{z^{k}\}$ the reflected Halpern PDHG iterates on solving \eqref{eq:minmax} via update rules \eqref{eq:rhpdhg} with step-size $\eta<\frac{1}{\|A\|_2}$.
    Denote $z^*\in \mathcal Z^*:=\{z^*\mid \mathrm{PDHG}(z^*)-z^*=0\}$. Then it holds for any $k\geq 1$ that
    \begin{equation*}
        \left\|z^k-\mathrm{PDHG}(z^k)\right\|_P\leq \frac{1}{k+1}\mathrm{dist}_P(z^0,\mathcal Z^*) \ .
    \end{equation*}
\end{thm}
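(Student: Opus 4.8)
The plan is to recognize the reflected Halpern PDHG iteration as the ordinary Halpern iteration driven by the \emph{reflection operator} $R := 2\,\mathrm{PDHG}(\cdot) - I$, and then reuse the Halpern rate already recorded in Theorem \ref{thm:sublinear-h}. First I would observe that, since $\mathrm{PDHG}$ is firmly non-expansive in $\|\cdot\|_P$ (Lemma \ref{lem:operator}), its reflection $R$ is non-expansive in $\|\cdot\|_P$, exactly as noted in the surrounding text. With $R$ in hand, the update rule \eqref{eq:rhpdhg} is precisely
\begin{equation*}
    z^{k+1} = \frac{k+1}{k+2}\,R(z^k) + \frac{1}{k+2}\,z^0,
\end{equation*}
that is, the standard Halpern scheme with the non-expansive map $R$ in place of $\mathrm{PDHG}$.

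Next I would apply the general (non-expansive-operator) form of the Halpern estimate underlying Theorem \ref{thm:sublinear-h}, now to $R$, which yields
\begin{equation*}
    \|z^k - R(z^k)\|_P \leq \frac{2}{k+1}\,\mathrm{dist}_P\!\left(z^0, \mathrm{Fix}(R)\right),
\end{equation*}
where $\mathrm{Fix}(R) = \{z : R(z) = z\}$. The key observation is that reflection leaves the fixed-point set unchanged: $R(z) = z$ is equivalent to $2\,\mathrm{PDHG}(z) - z = z$, i.e. $\mathrm{PDHG}(z) = z$, so $\mathrm{Fix}(R) = \mathcal Z^*$ and the right-hand side becomes $\frac{2}{k+1}\,\mathrm{dist}_P(z^0, \mathcal Z^*)$.

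Finally I would convert the $R$-residual into the $\mathrm{PDHG}$-residual. Since $R(z^k) - z^k = 2\big(\mathrm{PDHG}(z^k) - z^k\big)$, it follows that $\|z^k - \mathrm{PDHG}(z^k)\|_P = \tfrac12\|z^k - R(z^k)\|_P$, and substituting the bound above gives
\begin{equation*}
    \|z^k - \mathrm{PDHG}(z^k)\|_P = \tfrac12\|z^k - R(z^k)\|_P \leq \frac{1}{k+1}\,\mathrm{dist}_P(z^0, \mathcal Z^*),
\end{equation*}
which is the claimed factor-of-two improvement. The argument is essentially a reduction, so I do not anticipate a serious technical obstacle; the one point demanding care is that the factor $2$ in the Halpern rate is exactly cancelled by the factor $\tfrac12$ from the residual conversion, so the gain arises entirely because progress is measured through $R$ (whose single step travels twice as far) while the Halpern guarantee itself is rate-identical. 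I would also take care to invoke the Halpern estimate in its operator-agnostic form, valid for any non-expansive map, since $R$ is merely non-expansive rather than firmly non-expansive.
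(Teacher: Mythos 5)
Your proposal is correct and follows exactly the route the paper itself outlines (the paper states this theorem by citation rather than with a full proof, but its surrounding discussion sketches precisely this reduction): view \eqref{eq:rhpdhg} as the Halpern scheme applied to the non-expansive reflection $2\,\mathrm{PDHG}(\cdot)-I$, invoke Lieder's operator-agnostic rate from Theorem \ref{thm:sublinear-h}, and use that the reflection has the same fixed-point set while its residual is twice the PDHG residual. Your care on the two delicate points --- that the Halpern bound must be used in its general non-expansive form since $R$ is not firmly non-expansive, and that the factor-of-two gain comes entirely from the residual conversion $\left\|z^k-R(z^k)\right\|_P = 2\left\|z^k-\mathrm{PDHG}(z^k)\right\|_P$ --- is exactly right.
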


The Halpern scheme and reflection can be applied to other operator splitting algorithms beyond PDHG. Recently, ~\cite{chen2024hpr} proposes the use of Halpern Peaceman-Rachford splitting to solve LP, while the same sublinear rate is achieved therein.

\subsection{Sharpness and Linear Convergence of Last Iterates}\label{sec:sharp}
Theorem \ref{thm:last} seems to imply that the last iterates of PDHG have a slower convergence rate than average iterates (Theorem \ref{thm:average}) and Halpern variants (Theorem \ref{thm:sublinear-h} and Theorem \ref{thm:sublinear-rh}). Contradictorily, one often observes that numerically the last iterates exhibit faster convergence (even linear convergence) than the average and Halpern iterates. This is due to the structure of LP, which satisfies a certain regularity condition that we call the sharpness condition as discussed below.

Sharpness refers to the fact that a function grows at least linearly as it moves away from its minimizers. A fundamental property of LP is the sharpness of the KKT error, which was originally established in the seminal work of Hoffman \cite{hoffman1952approximate} and the constant $H$ is often referred as Hoffman constant:
    \begin{prop}[{\cite{hoffman1952approximate,pena2021new}}]\label{prop:sharp-hoffman}
        Consider an LP instance \eqref{eq:minmax}. Then there exists a constant $H>0$ and it holds for any $z=(x,y)\in \mZ=\mathbb R^n_+ \times \mathbb R^m$ that 
        \begin{equation*}
            \frac{1}{H}\mathrm{dist}_{2}(z,\mathcal Z^*) \leq \mathrm{KKT(z)}\ ,
        \end{equation*} 
        where $\mZ^*$ is the optimal solution set, $\mathrm{dist}_2(z,\mZ^*)=\min_{z^*\in\mZ^*}\|z-z^*\|_2$ is the distance between $z$ and $\mZ^*$ and $\mathrm{KKT}(z)$ is the KKT error defined in Definition \ref{def:kkt}. 
    \end{prop}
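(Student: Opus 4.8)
The plan is to realize the optimal set $\mathcal{Z}^*$ as a polyhedron whose defining inequalities align exactly with the three blocks of the KKT residual, and then to invoke Hoffman's error bound for polyhedral systems, which the proposition attributes to \cite{hoffman1952approximate}.

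First I would characterize $\mathcal{Z}^*$ through the KKT optimality conditions. Since the LP is feasible and bounded, strong duality holds, so a pair $z=(x,y)\in\mathcal{Z}=\mathbb R^n_+\times\mathbb R^m$ is optimal if and only if it is primal feasible ($Ax=b$), dual feasible ($A^\top y\le c$), and has nonpositive duality gap ($c^\top x-b^\top y\le 0$; weak duality forces $c^\top x\ge b^\top y$ on feasible pairs, so this is in fact an equality). Hence
\begin{equation*}
    \mathcal{Z}^*=\left\{(x,y)\ :\ Ax=b,\ x\ge 0,\ A^\top y\le c,\ c^\top x-b^\top y\le 0\right\},
\end{equation*}
which is a nonempty polyhedron in $\mathbb R^{m+n}$ (nonempty because the LP attains its optimum), described by finitely many linear inequalities once $Ax=b$ is split into $Ax\le b$ and $-Ax\le -b$.

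Next I would apply Hoffman's lemma to this polyhedron. Hoffman guarantees a constant $H>0$, depending only on the coefficient data $(A,b,c)$, such that the $\ell_2$ distance from any point to the polyhedron is controlled by the $\ell_2$ norm of the positive part of its constraint violations:
\begin{equation*}
    \mathrm{dist}_2(z,\mathcal{Z}^*)\le H\left\|\begin{pmatrix}
        [Ax-b]^+ \\ [b-Ax]^+ \\ [-x]^+ \\ [A^\top y-c]^+ \\ [c^\top x-b^\top y]^+
    \end{pmatrix}\right\|_2 .
\end{equation*}
The final step is to simplify the right-hand side using the restriction $z\in\mathcal{Z}$: since $x\ge 0$ the block $[-x]^+$ vanishes, and for the equality block the identity $(t^+)^2+(t^-)^2=t^2$ applied componentwise collapses $[Ax-b]^+$ and $[b-Ax]^+$ into $\|Ax-b\|_2$. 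The stacked residual therefore equals exactly $\mathrm{KKT}(z)$ as defined in Definition \ref{def:kkt}, giving $\mathrm{dist}_2(z,\mathcal{Z}^*)\le H\cdot\mathrm{KKT}(z)$, i.e. the claimed bound $\frac{1}{H}\mathrm{dist}_2(z,\mathcal{Z}^*)\le\mathrm{KKT}(z)$.

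The substantive obstacle is Hoffman's lemma itself---establishing a single global constant that uniformly controls the distance-to-violation ratio over all points of a polyhedral system. Because the proposition cites Hoffman's original result, I would use it as a black box rather than reprove it; the remaining effort is purely the bookkeeping that matches the polyhedral description of $\mathcal{Z}^*$ to the specific three-block form of $\mathrm{KKT}(z)$, together with confirming nonemptiness of $\mathcal{Z}^*$ from feasibility and boundedness. One should also note that $H$ inherits Hoffman's dependence on the chosen norm, which is consistent with the $\ell_2$-based definitions of both $\mathrm{dist}_2$ and $\mathrm{KKT}$ used here.
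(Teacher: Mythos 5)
Your proof is correct and follows essentially the same route the paper relies on: the paper states this proposition by citation to Hoffman's error bound, and the standard argument behind that citation is exactly your construction---write $\mathcal{Z}^*$ as the polyhedron cut out by primal feasibility, dual feasibility, and the nonpositive-gap inequality, then apply Hoffman's lemma and observe that, for $z\in\mathcal{Z}$, the stacked constraint violations coincide with $\mathrm{KKT}(z)$. Your bookkeeping (the vanishing $[-x]^+$ block and the collapse of the two inequality blocks for $Ax=b$ into $\|Ax-b\|_2$) is exactly right, as is noting that feasibility and boundedness of the LP give nonemptiness of $\mathcal{Z}^*$.
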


In light of the relationship of these three metrics (Proposition \ref{prop:chain}), one can directly show that the normalized duality gap and the infimal differential size are both sharp on a bounded region from the sharpness of KKT residual:

\begin{prop}[{\cite{applegate2023faster}}]\label{prop:sharp-ndg}
        The primal-dual formulation of linear programming \eqref{eq:minmax} is $\alpha$-sharp with respect to norm $\|\cdot\|_2$ on the set $\|z\|_2\leq R$ for all $r\leq R$ and $z\in \mZ=\mathbb R^n_+ \times \mathbb R^m$, i.e., there exists a constant $\alpha>0$ and it holds for any $z\in \mZ=\mathbb R^n_+ \times \mathbb R^m$ with $\|z\|_2\leq R$ and any $r\leq R$ that 
        \begin{equation*}
            \alpha\mathrm{dist}_2(z,\mathcal Z^*)\leq \rho_r(z) \ ,
        \end{equation*}
        where $\mZ^*$ is the optimal solution set, and $\mathrm{dist}_2(z,\mZ^*)=\min_{z^*\in\mZ^*}\|z-z^*\|_2$ is the distance between $z$ and $\mZ^*$.
    \end{prop}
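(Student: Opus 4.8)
The plan is to obtain the claimed sharpness bound by simply chaining together the two regularity results already established earlier in the excerpt: Hoffman's sharpness of the KKT error (Proposition \ref{prop:sharp-hoffman}) and the metric comparison of Proposition \ref{prop:chain}. The target inequality $\alpha\,\mathrm{dist}_2(z,\mathcal Z^*)\le \rho_r(z)$ lower-bounds the normalized duality gap by the distance to optimality, so the natural route is to pass through the KKT error as an intermediate quantity: first compare $\rho_r$ to $\mathrm{KKT}$, then compare $\mathrm{KKT}$ to the distance.

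Concretely, I would proceed as follows. Fix $z\in\mZ=\mathbb R^n_+\times\mathbb R^m$ with $\|z\|_2\le R$ and any $r\le R$; note these hypotheses are exactly what is needed to invoke the left inequality of Proposition \ref{prop:chain} with this particular $R$ (since $R\ge\|z\|_2$ and $r\in(0,R]$), giving
\begin{equation*}
\rho_r(z)\;\ge\;\frac{1}{\sqrt{1+R^2}}\,\mathrm{KKT}(z)\ .
\end{equation*}
Next I would apply Hoffman's bound (Proposition \ref{prop:sharp-hoffman}), which holds globally for every such $z$ with a problem-dependent constant $H>0$, to replace $\mathrm{KKT}(z)$ by $\frac{1}{H}\mathrm{dist}_2(z,\mathcal Z^*)$. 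Composing the two inequalities yields
\begin{equation*}
\rho_r(z)\;\ge\;\frac{1}{H\sqrt{1+R^2}}\,\mathrm{dist}_2(z,\mathcal Z^*)\ ,
\end{equation*}
so the proposition follows with the explicit constant $\alpha=\tfrac{1}{H\sqrt{1+R^2}}$.

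There is essentially no technical obstacle once these two prior propositions are granted; the entire nontrivial content lives in Hoffman's lemma, whose proof is cited rather than reproduced. The one point worth emphasizing, rather than a difficulty, is \emph{why} the statement is restricted to the bounded set $\{\|z\|_2\le R\}$: the comparison constant $1/\sqrt{1+R^2}$ from Proposition \ref{prop:chain} degrades as $R\to\infty$, so the resulting sharpness constant $\alpha$ necessarily depends on $R$ and deteriorates on larger regions. This is precisely what makes the sharpness of $\rho_r$ a \emph{local} (region-dependent) property, in contrast to the global Hoffman sharpness of the KKT error, and it is the reason the hypothesis $\|z\|_2\le R$ cannot be dropped. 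The analogous sharpness claim for the infimal subdifferential size asserted in the proposition statement would follow identically, using instead the upper bound $\rho_r(z)\le \mathrm{dist}_2(0,\mathcal F(z))$ from Proposition \ref{prop:chain} in combination with the same Hoffman estimate.
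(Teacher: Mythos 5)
Your proof is correct and is essentially the paper's own argument: the paper derives Proposition \ref{prop:sharp-ndg} precisely by chaining Hoffman sharpness of the KKT error (Proposition \ref{prop:sharp-hoffman}) with the comparison $\frac{1}{\sqrt{1+R^2}}\mathrm{KKT}(z)\leq\rho_r(z)$ from Proposition \ref{prop:chain}, giving the constant $\alpha=\frac{1}{H\sqrt{1+R^2}}$ exactly as you obtain. Your remark that the $R$-dependent degradation of this constant is what makes the sharpness of $\rho_r$ a region-dependent property (unlike the global Hoffman bound) is also the right explanation for the bounded-set hypothesis.
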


    \begin{prop}[{\cite{zheng2014metric,robinson1981some,dontchev2009implicit}}]\label{prop:sharp-ms}
        The primal-dual formulation of linear programming \eqref{eq:minmax} satisfies $\alpha$-metric sub-regularity on the set $\|z\|_2\leq R$ with $z\in \mZ=\mathbb R^n_+ \times \mathbb R^m$, i.e., there exists a constant $\alpha>0$ and it holds for any $z\in \mZ=\mathbb R^n_+ \times \mathbb R^m$ with $\|z\|_2\leq R$ that 
        \begin{equation*}
            \alpha \mathrm{dist}_{2}(z,\mathcal Z^*) \leq \mathrm{dist}_{2}(0,\mathcal F(z)) \ ,
        \end{equation*} 
        where operator $\mathcal F(z)=\mathcal F(x,y)=\begin{pmatrix}
        \partial_x L(x,y) \\ -\partial_{y}  L(x,y)
        \end{pmatrix}$, $\mZ^*$ is the optimal solution set, and $\mathrm{dist}_2(z,\mZ^*)=\min_{z^*\in\mZ^*}\|z-z^*\|_2$ is the distance between $z$ and $\mZ^*$. 
    \end{prop}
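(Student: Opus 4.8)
The plan is to obtain the metric sub-regularity constant $\alpha$ essentially for free, by composing two results already established in the excerpt: the Hoffman sharpness of the KKT error (Proposition \ref{prop:sharp-hoffman}) and the chain of inequalities among the progress metrics (Proposition \ref{prop:chain}). The key observation is that the infimal sub-differential size $\mathrm{dist}_2(0,\mathcal F(z))$ sits at the top of that chain, so it dominates the KKT error up to the factor $\sqrt{1+R^2}$, while Hoffman's bound controls the distance to optimality by the KKT error. Threading these together immediately yields the desired lower bound on $\mathrm{dist}_2(0,\mathcal F(z))$ in terms of $\mathrm{dist}_2(z,\mathcal Z^*)$. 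This mirrors the derivation of the normalized-duality-gap sharpness (Proposition \ref{prop:sharp-ndg}), which uses the same two ingredients, and indeed the two sharpness statements can be proved in one stroke.

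Concretely, I would fix an arbitrary $z=(x,y)\in\mathbb R^n_+\times\mathbb R^m$ with $\|z\|_2\le R$ and pick any radius $r\in(0,R]$. Since $R\ge\|z\|_2$, Proposition \ref{prop:chain} applies and gives
\begin{equation*}
\frac{1}{\sqrt{1+R^2}}\mathrm{KKT}(z)\le \rho_r(z)\le \mathrm{dist}_2(0,\mathcal F(z)) \ .
\end{equation*}
On the other hand, Hoffman sharpness (Proposition \ref{prop:sharp-hoffman}) provides a constant $H>0$, independent of $z$, with $\tfrac{1}{H}\mathrm{dist}_2(z,\mathcal Z^*)\le \mathrm{KKT}(z)$. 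Substituting the latter into the former and setting $\alpha=\tfrac{1}{H\sqrt{1+R^2}}$ produces
\begin{equation*}
\alpha\,\mathrm{dist}_2(z,\mathcal Z^*)\le \mathrm{dist}_2(0,\mathcal F(z)) \ ,
\end{equation*}
which is exactly the claimed bound. Since $\alpha$ depends only on $H$ and $R$, not on the particular $z$, the estimate is uniform over the ball $\|z\|_2\le R$, as required.

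There is essentially no hard step remaining once Hoffman's lemma is taken as given: the argument is a two-line telescoping of inequalities, and the only points needing care are bookkeeping ones. First, one must verify that the hypotheses of Proposition \ref{prop:chain} are met, namely $R\ge\|z\|_2$ and $r\le R$, both of which hold by assumption. Second, one should record that the final constant $\alpha=1/(H\sqrt{1+R^2})$ inherits its uniformity from $H$, so no hidden $z$-dependence creeps in. The genuinely nontrivial content---that LP enjoys a global Hoffman-type error bound on the KKT residual---is entirely encapsulated in Proposition \ref{prop:sharp-hoffman}; consequently this proposition reduces to a short corollary of the metric relationships already in hand. The only conceptual subtlety worth flagging is that one could alternatively attempt a direct argument through the explicit form $\mathcal F(z)=\big(c-A^\top y+\partial\iota_{\mathbb R^n_+}(x),\ Ax-b\big)$ and a coordinatewise analysis of the minimum-norm subgradient, but this merely re-derives the chain bound $\rho_r(z)\le\mathrm{dist}_2(0,\mathcal F(z))$ and is therefore unnecessary.
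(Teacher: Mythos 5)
Your proof is correct and follows exactly the route the paper indicates: the paper derives Proposition~\ref{prop:sharp-ms} (and Proposition~\ref{prop:sharp-ndg}) precisely by combining the metric chain of Proposition~\ref{prop:chain} with the Hoffman sharpness of the KKT error in Proposition~\ref{prop:sharp-hoffman}, just as you do, yielding the constant $\alpha = 1/(H\sqrt{1+R^2})$ uniformly over the ball $\|z\|_2 \le R$.
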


Particularly, the sharpness of the infimal differential size is also called metric sub-regularity, which has been extensively studied in the literature of convex analysis~\cite{dontchev2004regularity,rockafellar2009variational,dontchev2009implicit,zheng2007metric,zheng2014metric,drusvyatskiy2018error,drusvyatskiy2013tilt,drusvyatskiy2013second}. For simplicity, we use the term $\alpha$-metric sub-regularity as a shorthand for the $\alpha$-metric sub-regularity for all $z^*\in\mZ^*$ of $\mathcal F$ at $z^*$ for $0$~\cite{rockafellar2009variational,dontchev2009implicit,drusvyatskiy2018error}.

With the sharpness condition, the next theorem shows that the last iterates of \eqref{eq:pdhg-thm} have global linear convergence on LP. 

\begin{thm}[{\cite{fercoq2022quadratic,lu2022infimal,lu2023unified}}]\label{thm:linear}
    Consider PDHG iterates $\{z^k=(x^k,y^k)\}_{k=1,...,\infty}$ obtained from \eqref{eq:pdhg-thm} with step-size $\eta<\frac{1}{\|A\|_2}$ and the initial solution $z^0=(x^0,y^0)$. Let $P=\begin{pmatrix}
        \frac{1}{\eta}I & A^\top \\ A & \frac{1}{\eta}I
    \end{pmatrix}$. Let $R$ be a constant such that $\|z^k\|_2\leq R$ for any $k\geq 0$. Denote $\alpha$ the corresponding sharpness constant as in Proposition \ref{prop:sharp-ms}. Then it holds for any $k\ge 1$ that    
\begin{equation*}
    \mathrm{dist}_2(z^k,\mathcal Z^*) \leq \exp\pran{1-\frac{k}{\left\lceil e^2\sigma_{\text{max}}^2(P)/\alpha^2\right\rceil}}\mathrm{dist}_2(z^{0},\mathcal Z^*) \ .
\end{equation*}
\end{thm}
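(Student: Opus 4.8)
The plan is to prove a geometric contraction of $\mathrm{dist}_2(\cdot,\mathcal{Z}^*)$ over blocks of $K:=\lceil e^2\sigma_{\max}^2(P)/\alpha^2\rceil$ iterations, and then repackage this block-wise decay into the stated continuous envelope $\exp(1-k/K)$. The three ingredients are all available in the excerpt: the operator identity $P(z^{j}-z^{j+1})\in\mathcal{F}(z^{j+1})$ from Proposition \ref{prop:pdhg}, the $O(1/\sqrt{t})$ decay of the fixed-point residual from Theorem \ref{thm:last}, and the metric sub-regularity $\alpha\,\mathrm{dist}_2(z,\mathcal{Z}^*)\le \mathrm{dist}_2(0,\mathcal{F}(z))$ from Proposition \ref{prop:sharp-ms}. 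The idea is that the residual bound forces $z^{j+1}$ to carry a small subgradient, sharpness turns a small subgradient into a small distance to $\mathcal{Z}^*$, and firm non-expansiveness (Lemma \ref{lem:operator}) guarantees the residual bound can be restarted from any iterate.

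First I would establish a single-block contraction: for any starting index $n$ and any block length $t\ge 1$,
\begin{equation*}
\mathrm{dist}_2(z^{n+t},\mathcal{Z}^*)\le \frac{\sigma_{\max}(P)}{\alpha\sqrt{t}}\,\mathrm{dist}_2(z^n,\mathcal{Z}^*)\ .
\end{equation*}
To derive this, apply Proposition \ref{prop:pdhg} at index $n+t-1$ to get $P(z^{n+t-1}-z^{n+t})\in\mathcal{F}(z^{n+t})$, so that $\mathrm{dist}_2(0,\mathcal{F}(z^{n+t}))\le \|P(z^{n+t-1}-z^{n+t})\|_2\le \sqrt{\sigma_{\max}(P)}\,\|z^{n+t-1}-z^{n+t}\|_P$, using $\|Pv\|_2\le\sqrt{\sigma_{\max}(P)}\,\|v\|_P$. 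Invoking Theorem \ref{thm:last} on the sequence started at $z^n$ bounds the terminal residual of the block by $\|z^{n+t-1}-z^{n+t}\|_P\le \mathrm{dist}_P(z^n,\mathcal{Z}^*)/\sqrt{t}$, where monotonicity of the residual lets us pass from the averaged telescoped estimate to the last step. Finally metric sub-regularity gives $\alpha\,\mathrm{dist}_2(z^{n+t},\mathcal{Z}^*)\le \mathrm{dist}_2(0,\mathcal{F}(z^{n+t}))$, and the conversion $\mathrm{dist}_P(z^n,\mathcal{Z}^*)\le\sqrt{\sigma_{\max}(P)}\,\mathrm{dist}_2(z^n,\mathcal{Z}^*)$ collapses the two norm changes into a single factor $\sigma_{\max}(P)$, yielding the displayed recursion.

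Choosing $t=K$ makes the per-block factor at most $1/e$, so telescoping over $\ell$ blocks gives $\mathrm{dist}_2(z^{\ell K},\mathcal{Z}^*)\le e^{-\ell}\,\mathrm{dist}_2(z^0,\mathcal{Z}^*)$; writing $\ell=\lfloor k/K\rfloor$ and folding the final partial block (of length $\ge K$, hence again contracting by $\le 1/e$) into one long block yields $e^{-\lfloor k/K\rfloor}\le \exp(1-k/K)$, which is the claim. The step I expect to be the main obstacle is the \emph{norm bookkeeping} in the single-block estimate: the fixed-point residual and the contraction in Lemma \ref{lem:operator} live in the $\|\cdot\|_P$ geometry, whereas both the sharpness constant $\alpha$ and the target are stated in $\|\cdot\|_2$, and one must arrange the two conversions so that only $\sigma_{\max}(P)$ — and not the conditioning $\sigma_{\max}(P)/\sigma_{\min}(P)$ — survives in the final rate. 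A secondary but routine point is the continuous repackaging: the additive $1$ in the exponent (equivalently the leading factor $e$) is exactly the slack that absorbs the partial last block and makes the bound hold for every $k$, being loose only in the pre-asymptotic regime $k<K$ where it reduces to the trivial estimate at $k=0$.
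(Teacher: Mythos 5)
Your overall route --- a single-block estimate obtained by chaining Proposition~\ref{prop:pdhg}, the residual bound behind Theorem~\ref{thm:last}, and metric sub-regularity (Proposition~\ref{prop:sharp-ms}), followed by telescoping over blocks of length $K=\lceil e^2\sigma_{\max}^2(P)/\alpha^2\rceil$ --- is the natural one given the survey's toolkit (the survey itself defers the proof to the cited references), and the norm bookkeeping at the heart of it is handled correctly: starting and ending each block in $\mathrm{dist}_2$ and paying $\sqrt{\sigma_{\max}(P)}$ twice, once via $\|Pv\|_2\le\sqrt{\sigma_{\max}(P)}\,\|v\|_P$ and once via $\mathrm{dist}_P\le\sqrt{\sigma_{\max}(P)}\,\mathrm{dist}_2$, so that only $\sigma_{\max}(P)$ and never $\sigma_{\min}(P)$ appears. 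One indexing caveat: quoting Theorem~\ref{thm:last} verbatim for the block started at $z^n$ gives $\|z^{n+t}-z^{n+t-1}\|_P\le\mathrm{dist}_P(z^n,\mathcal{Z}^*)/\sqrt{t-1}$, not $/\sqrt{t}$. To get $\sqrt{t}$ you must rerun the argument of \eqref{eq:sublinear} and bound the \emph{last} step of the block by the average of all $t$ steps including itself (monotonicity of the residual permits exactly this). This is not cosmetic: with $1/\sqrt{t-1}$, a block of length exactly $K$ contracts only by $\sigma_{\max}(P)/(\alpha\sqrt{K-1})$, which can exceed $1/e$, and repairing it by taking blocks of length $K+1$ degrades the exponent to $k/(K+1)$, which no longer matches the stated constant.

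The genuine gap is the regime $1\le k<K$, which you dismiss as ``the trivial estimate.'' It is not trivial. Your block argument requires $\lfloor k/K\rfloor\ge 1$, so for $k<K$ it produces nothing, and the claim there is $\mathrm{dist}_2(z^k,\mathcal{Z}^*)\le e^{1-k/K}\,\mathrm{dist}_2(z^0,\mathcal{Z}^*)$ with a factor strictly between $1$ and $e$ --- i.e.\ a quantitative bound on how much the distance can \emph{grow} in the transient phase. Nothing you invoke controls this: $\mathrm{dist}_2(\cdot,\mathcal{Z}^*)$ is not monotone along PDHG iterates, since Lemma~\ref{lem:operator} gives contraction only in $\|\cdot\|_P$, and transferring that contraction to $\ell_2$ costs the factor $\sqrt{\sigma_{\max}(P)/\sigma_{\min}(P)}$, which blows up as $\eta\uparrow 1/\|A\|_2$ and in particular is not bounded by $e$. (Within a single linear phase the PDHG map is a non-normal matrix whose $\ell_2$ operator norm can be close to $2$ even though its $P$-norm is at most $1$.) You can partially plug the hole with your own single-block estimate applied with block length $k$: this gives $\mathrm{dist}_2(z^k,\mathcal{Z}^*)\le\frac{\sigma_{\max}(P)}{\alpha\sqrt{k}}\mathrm{dist}_2(z^0,\mathcal{Z}^*)\le\frac{\sqrt{K}}{e\sqrt{k}}\mathrm{dist}_2(z^0,\mathcal{Z}^*)$, and the required inequality $\sqrt{K/k}\le e^{2-k/K}$ holds for $k/K$ down to roughly $e^{-4}$ but \emph{fails} for smaller $k$. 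So whenever $K>e^4$ (i.e.\ for ill-conditioned instances, exactly the interesting case) there is a window of small $k$ in which your proof establishes nothing, and closing it needs an additional idea --- either a genuine bound on the transient $\ell_2$ growth of PDHG, or restructuring the induction so that all intermediate comparisons are made in the $P$-norm and the $\ell_2$ conversion is paid only in a way that the factor $e$ in the statement can absorb.
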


Theorem \ref{thm:linear} shows that indeed the last iteration of PDHG has linear convergence to the optimal solution, thanks to the sharpness of LP. In contrast, the average and Halpern iterates, as shown in the previous two sections, always have sublinear convergence if no further modification is made. This explains the numerical observation that quite often the last iteration of PDHG converges faster than the average PDHG iterate and/or the Halpern PDHG. We will see how to speed up the average PDHG and Halpern by restarts in next subsection.

\subsection{Restart and Optimal FOM for LP}\label{sec:optimal}
Theorem \ref{thm:linear} shows that the last iterates of \eqref{eq:pdhg-thm} have linear convergence with complexity $O\pran{\pran{\frac{\|A\|_2}{\alpha}}^2\log\pran{\frac{1}{\epsilon}}}$ with $\eta=O\pran{\frac{1}{\|A\|_2}}$. A natural question is whether there exists an FOM with faster convergence for LP. The answer is yes, and it turns out restart variants of PDHG achieves faster linear convergence and it matches with the complexity lower bound~\cite{applegate2023faster,lu2024restarted}. 

Restart techniques for LP were first introduced in~\cite{applegate2023faster} for various primal-dual algorithms, accompanied by a complexity lower bound to establish the optimality of the proposed approach. The concept of adaptive restart is straightforward: the base algorithm continues iterating until a measurable progress metric, such as KKT error, normalized duality gap, or fixed-point residual, decays by a pre-specified factor between 0 and 1. Once sufficient decay is observed, the base algorithm restarts with a new initial solution, marking the beginning of a new epoch.

In~\cite{applegate2023faster}, normalized duality gap was employed as the progress metric, and the base algorithm (e.g., vanilla PDHG) restarted at the average iterates from the previous epoch. However, the computation of normalized duality gap may involve a non-trivial and sequential trust-region algorithm that is not suitable for GPU implementation. Subsequently, \cite{lu2023cupdlp} demonstrated that KKT error could also serve as a restart progress metric while achieving the same optimal linear convergence rate established in \cite{applegate2023faster}.
More recently, it has been shown that Halpern PDHG, with or without reflection, can utilize fixed-point residual as its restart metric and restart at PDHG iterates from current epoch~\cite{lu2024restarted}. This approach still achieves the optimal rate for solving LP. Moreover, note the relationship between different progress metrics (see Proposition \ref{prop:chain-pdhg}), it is straightforward to extend restarted (reflected) Halpern PDHG to use normalized duality gap or KKT error as the restart metric, while maintaining the optimal convergence rate. 

In the rest of this section, we discuss in detail the restarted Halpern PDHG (Algorithm \ref{alg:hpdhg-restart}), with fixed-point residual as metric. Algorithm \ref{alg:hpdhg-restart} formally presents this algorithm. This is a two-loop algorithm. The inner loop runs \eqref{eq:rhpdhg} until one of the restart conditions holds. At the end of each inner loop, the algorithm restarts the next outer loop at PDHG iterates of the current epoch. 

\begin{algorithm}%[H]
\caption{Restarted Halpern PDHG for \eqref{eq:minmax}}
\label{alg:hpdhg-restart}
\SetKwInOut{Input}{Input}
\Input{Initial point $(x^0,y^0)$, outer loop counter $n\leftarrow 0$.}

\Repeat{\upshape $(x^{n+1,0},y^{n+1,0})$ convergence}{
  initialize the inner loop counter $k\leftarrow0$;\\
  \Repeat{\upshape restart condition \eqref{eq:adaptive-restart} holds}{
    $(x^{n,k+1},y^{n,k+1})\leftarrow \frac{k+1}{k+2}\mathrm{PDHG}(x^{n,k},y^{n,k})+\frac{1}{k+2}(x^{n,0},y^{n,0})$ as in \eqref{eq:rhpdhg};
  }
  initialize the initial solution $(x^{n+1,0}, y^{n+1,0})\leftarrow \mathrm{PDHG}(x^{n,k},y^{n,k})$;\\
  $n\leftarrow n+1$;
}
\end{algorithm}

A crucial component of the algorithm is when to restart. In \cite{lu2024restarted}, an adaptive restart scheme is proposed: we initiate a restart when there is significant decay in the difference between iterates $z^{n,k}$ and $\mathrm{PDHG}(z^{n,k})$. This adaptive restart mechanism does not require an estimate of sharpness $\alpha$ (as defined in Proposition \ref{prop:sharp-ms}) which is almost inaccessible in practice~\cite{pena2021new}. Specifically, we restart the algorithm if
    \begin{equation}\label{eq:adaptive-restart}
    \begin{cases}
        \left\|z^{n,k}-\mathrm{PDHG}(z^{n,k})\right\|_P\leq \cfrac 1e \left\|z^{n,0}-\mathrm{PDHG}(z^{n,0})\right\|_P, & n\geq 1\\
        k>\tau^0,& n=0
    \end{cases}
\end{equation}

\begin{thm}[Adaptive restart for LP]\label{thm:adaptive-restart}
Consider $\{z^{n,k}\}$ the iterates of restarted Halpern PDHG (Algorithm \ref{alg:hpdhg-restart}) with adaptive restart scheme, namely, we restart the outer loop if \eqref{eq:adaptive-restart} holds. Denote $\mathcal Z^*$ is the set of optimal solutions. Then it holds for any $n\geq 0$ that
\begin{enumerate}
    \item[(i)] The restart length $\tau^n$ is upper bounded by $k^*$,
    \begin{equation*}
    \tau^n\leq \left\lceil{\frac{2e\sigma_{max}(P)}{\alpha}}\right\rceil=:k^* \ ,
    \end{equation*}
    \item[(ii)] The distance to optimal set decays linearly,
    \begin{equation*}
        \mathrm{dist}_2(z^{n+1,0},\mathcal Z^*) \leq \pran{\frac 1e}^{n+1}\frac{2e\sigma_{max}(P)}{ (\tau^0+1)\alpha}\mathrm{dist}_2(z^{0,0},\mathcal Z^*) \ .
    \end{equation*}
\end{enumerate}
\end{thm}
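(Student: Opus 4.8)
The plan is to derive both claims from a single sharpness-to-residual inequality that converts the fixed-point residual into a bound on the distance to optimality, and then feed this into the Halpern sublinear rate (Theorem \ref{thm:sublinear-h}) for the length bound (i) and into a telescoping argument for the decay (ii). First I would establish the key inequality
$$\mathrm{dist}_P(\mathrm{PDHG}(z), \mathcal Z^*) \le \frac{\sigma_{\max}(P)}{\alpha}\big\|z - \mathrm{PDHG}(z)\big\|_P ,$$
valid for all iterates, which stay in a bounded region by the boundedness consequence of Lemma \ref{lem:operator}. This follows by combining three ingredients: the identity $P(z - \mathrm{PDHG}(z)) \in \mathcal F(\mathrm{PDHG}(z))$ recalled just before Proposition \ref{prop:chain-pdhg}; metric sub-regularity (Proposition \ref{prop:sharp-ms}), which gives $\alpha\,\mathrm{dist}_2(\mathrm{PDHG}(z), \mathcal Z^*) \le \mathrm{dist}_2(0, \mathcal F(\mathrm{PDHG}(z))) \le \|P(z-\mathrm{PDHG}(z))\|_2$; and the elementary norm comparisons $\|Pv\|_2 \le \sqrt{\sigma_{\max}(P)}\,\|v\|_P$ and $\|v\|_P \le \sqrt{\sigma_{\max}(P)}\,\|v\|_2$. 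The crucial structural observation is that each epoch restarts exactly at a PDHG output, $z^{n+1,0} = \mathrm{PDHG}(z^{n,\tau^n})$, so this inequality applies verbatim at every epoch boundary.

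For part (i), within epoch $n$ the Halpern iterates are anchored at $z^{n,0}$, so Theorem \ref{thm:sublinear-h} gives $\|z^{n,k} - \mathrm{PDHG}(z^{n,k})\|_P \le \frac{2}{k+1}\mathrm{dist}_P(z^{n,0}, \mathcal Z^*)$. A restart fires once the left-hand side drops below $\frac{1}{e}\|z^{n,0} - \mathrm{PDHG}(z^{n,0})\|_P$. Bounding $\mathrm{dist}_P(z^{n,0}, \mathcal Z^*)$ by a constant multiple of the starting residual $\|z^{n,0}-\mathrm{PDHG}(z^{n,0})\|_P$ through the key inequality (with a triangle inequality to pass from $\mathrm{PDHG}(z^{n,0})$ back to $z^{n,0}$) reduces the trigger condition to requiring $k+1$ to exceed a constant multiple of $\sigma_{\max}(P)/\alpha$, which yields $\tau^n \le k^*$.

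For part (ii), I would track the start-of-epoch residual $R_n := \|z^{n,0}-\mathrm{PDHG}(z^{n,0})\|_P$. Firm non-expansiveness (Lemma \ref{lem:operator}) makes the fixed-point residual non-increasing under a PDHG step, so $R_n = \|\mathrm{PDHG}(z^{n-1,\tau^{n-1}}) - \mathrm{PDHG}^2(z^{n-1,\tau^{n-1}})\|_P \le \|z^{n-1,\tau^{n-1}}-\mathrm{PDHG}(z^{n-1,\tau^{n-1}})\|_P$; together with the restart rule $\|z^{n-1,\tau^{n-1}}-\mathrm{PDHG}(z^{n-1,\tau^{n-1}})\|_P \le \frac{1}{e}R_{n-1}$ this gives the per-epoch contraction $R_n \le \frac{1}{e}R_{n-1}$. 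Unwinding to epoch $0$ and using the sublinear rate once more to bound $R_1$ by $\frac{2}{\tau^0+1}\mathrm{dist}_P(z^{0,0},\mathcal Z^*)$ produces $R_n \le (1/e)^{n-1}\frac{2}{\tau^0+1}\mathrm{dist}_P(z^{0,0},\mathcal Z^*)$. Finally, applying the key inequality at the boundary $z^{n+1,0}=\mathrm{PDHG}(z^{n,\tau^n})$ and the restart rule gives $\mathrm{dist}_P(z^{n+1,0},\mathcal Z^*) \le \frac{\sigma_{\max}(P)}{\alpha}\cdot\frac{1}{e}R_n$, which telescopes to the claimed $(1/e)^{n+1}$ rate once one converts between $\|\cdot\|_P$ and $\|\cdot\|_2$.

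The main obstacle is the shift between $z$ and $\mathrm{PDHG}(z)$: sharpness naturally controls the distance at the output $\mathrm{PDHG}(z)$, because the only available subgradient element lies in $\mathcal F(\mathrm{PDHG}(z))$, whereas part (i) needs the bound at the anchor $z^{n,0}$ itself. This is where I expect the delicate bookkeeping — routing through the triangle inequality and tracking the norm-equivalence constants of $P$ (whose eigenvalues are $\tfrac{1}{\eta}\pm\sigma_i(A)$) — to be required in order to pin down the precise constant $k^* = \lceil 2e\sigma_{\max}(P)/\alpha\rceil$ rather than a qualitatively similar but looser bound. Part (ii), by contrast, only ever invokes sharpness at genuine epoch boundaries, where the output form $z^{n+1,0}=\mathrm{PDHG}(\cdot)$ is automatic and no such shift is incurred, so it should go through cleanly.
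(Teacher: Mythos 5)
Your architecture is the right one, and it is essentially how this result is proved in \cite{lu2024restarted} (the survey itself states the theorem without giving a proof): the inclusion $P(z-\mathrm{PDHG}(z))\in\mathcal F(\mathrm{PDHG}(z))$, combined with metric sub-regularity (Proposition \ref{prop:sharp-ms}) and the comparison $\|Pv\|_2\le\sqrt{\sigma_{\max}(P)}\|v\|_P$, is exactly the bridge from fixed-point residual to distance-to-optimality; part (i) then follows from the Halpern sublinear rate applied at the anchor, and part (ii) from the per-epoch contraction $R_n\le\tfrac1e R_{n-1}$ (one nonexpansive PDHG step plus the restart rule), seeded by the sublinear rate over epoch $0$. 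Your index bookkeeping in (ii) is also consistent with the statement, since $(1/e)^{n+1}\cdot 2e=(1/e)^n\cdot 2$.

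Two constant-tracking points remain, and the first is a genuine gap as you have set things up. In part (i) you pair Theorem \ref{thm:sublinear-h} (rate $\tfrac{2}{k+1}$, plain Halpern) with the triangle-inequality anchor bound $\mathrm{dist}_P(z^{n,0},\mathcal Z^*)\le\bigl(1+\tfrac{\sigma_{\max}(P)}{\alpha}\bigr)\|z^{n,0}-\mathrm{PDHG}(z^{n,0})\|_P$; this can only ever give $\tau^n\le\lceil 2e(1+\sigma_{\max}(P)/\alpha)\rceil$, and no amount of bookkeeping compresses that to $k^*$. The resolution is that Algorithm \ref{alg:hpdhg-restart} runs the \emph{reflected} update \eqref{eq:rhpdhg}, so the applicable rate is Theorem \ref{thm:sublinear-rh}'s $\tfrac{1}{k+1}$; combined with $1+\tfrac{\sigma_{\max}(P)}{\alpha}\le\tfrac{2\sigma_{\max}(P)}{\alpha}$ (valid in the regime of interest, where $\sigma_{\max}(P)\ge 1/\eta>\|A\|_2\ge\alpha$, cf.\ the setting $\gamma>\alpha$ of Theorem \ref{thm:lb}), the trigger fires once $k+1\ge\tfrac{2e\sigma_{\max}(P)}{\alpha}$, i.e.\ $\tau^n\le k^*$ exactly. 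Second, in part (ii) you should not convert $\mathrm{dist}_P(z^{n+1,0},\mathcal Z^*)$ into $\mathrm{dist}_2$ at the very end — that costs a factor $1/\sqrt{\sigma_{\min}(P)}$ which is absent from the statement. Instead, apply at each boundary the $\ell_2$ form of your own intermediate inequality, $\alpha\,\mathrm{dist}_2(z^{n+1,0},\mathcal Z^*)\le\sqrt{\sigma_{\max}(P)}\,\|z^{n,\tau^n}-\mathrm{PDHG}(z^{n,\tau^n})\|_P$, and pair it with the seed conversion $\mathrm{dist}_P(z^{0,0},\mathcal Z^*)\le\sqrt{\sigma_{\max}(P)}\,\mathrm{dist}_2(z^{0,0},\mathcal Z^*)$: the two square roots combine into the single factor $\sigma_{\max}(P)$ appearing in the theorem, and $\sigma_{\min}(P)$ never enters.
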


Theorem \ref{thm:adaptive-restart} shows that the iterates of Algorithm \ref{alg:hpdhg-restart} have linear convergence with complexity $O\pran{\frac{\|A\|_2}{\alpha}\log\pran{\frac{1}{\epsilon}}}$ with $\eta=O\pran{\frac{1}{\|A\|_2}}$. Compared with $O\pran{\pran{\frac{\|A\|_2}{\alpha}}^2\log\pran{\frac{1}{\epsilon}}}$ as proved in Theorem \ref{thm:linear}, restart variants of PDHG achieve an accelerated linear rate in the sense of better dependence on condition number $\frac{\|A\|_2}{\alpha}$.

Furthermore, the complexity lower bound established in \cite{applegate2023faster} shows that restarted reflected Halpern PDHG is optimal across a wide range of FOMs for LP. In particular, we consider the span-respecting FOMs:

\begin{mydef}\label{def:span-respecting}
    An algorithm is span-respecting for an unconstrained primal-dual problem $\min_x\max_y\; L(x,y)$ if
    \begin{align*}
        \begin{split}
            & \ x^k\in x^0+\mathrm{span}\{\nabla_x L(x^i,y^j): \forall i,j\in\{1,...,k-1\}\}\\
            & \ y^k\in y^0+\mathrm{span}\{\nabla_y L(x^i,y^j): \forall i\in\{1,...,k\},\forall j\in\{1,...,k-1\} \} \ .
        \end{split}
    \end{align*}
\end{mydef}
Definition \ref{def:span-respecting} is an extension of the span-respecting FOMs for minimization~\cite{nesterov2003introductory} in the primal-dual setting. 
Theorem \ref{thm:lb} provides a lower complexity bound of span-respecting primal-dual algorithms for LP.

\begin{thm}[Lower complexity bound {\cite{applegate2023faster}}]\label{thm:lb}
    Consider any iteration $k\geq 0$ and parameter value $\gamma>\alpha>0$. There exists an $\alpha$-sharp linear programming with $\|A\|_2=\gamma$ such that the iterates $z^k$ of any span-respecting algorithm  satisfies that
    \begin{equation*}
        \mathrm{dist}_2(z^k,\mathcal Z^*)\geq \pran{1-\frac{\alpha}{\gamma}}^{k} \mathrm{dist}_2(z^0,\mathcal Z^*) \ .
    \end{equation*}
\end{thm}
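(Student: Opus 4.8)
The plan is to exhibit, for each target iteration count $k$, a single worst-case LP instance (allowed to depend on $k$, exactly as the statement permits) on which every span-respecting algorithm contracts no faster than the stated rate. This mirrors the classical Nesterov-style lower-bound recipe for smooth strongly convex minimization, transported to the bilinear saddle-point setting, with the sharpness constant $\alpha$ playing the role of strong convexity and $\|A\|_2=\gamma$ the role of smoothness. First I would use translation invariance to assume $z^0=0$, reducing the goal to showing $\mathrm{dist}_2(z^k,\mathcal Z^*)\geq (1-\alpha/\gamma)^k\,\mathrm{dist}_2(0,\mathcal Z^*)$.

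The key structural observation is that for the bilinear objective the gradients are $\nabla_x L(x,y)=c-A^\top y$ and $\nabla_y L(x,y)=b-Ax$, so starting from the origin the span-respecting property of Definition \ref{def:span-respecting} confines the iterates to Krylov-type subspaces generated by repeated application of $A$ and $A^\top$ to the data $b,c$. I would therefore choose $A$ with a bidiagonal, shift-like sparsity pattern and take $c,b$ supported on the first coordinate(s), so that the reachable subspace $\mathcal K_k$ after $k$ iterations is contained in the span of the first $O(k)$ coordinates, each iteration widening the reachable support by at most one index.

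Next I would calibrate the data so that the instance has a unique optimal pair $z^*$ whose coordinates decay geometrically with ratio $1-\alpha/\gamma$, i.e. $|z^*_j|\propto (1-\alpha/\gamma)^j$. Since $z^k\in\mathcal K_k$ can match $z^*$ only on the reachable coordinates, the unreachable tail furnishes the lower bound
\[
\mathrm{dist}_2(z^k,\mathcal Z^*)^2 \;\geq\; \sum_{j\notin\mathcal K_k}(z^*_j)^2 \;\geq\; (1-\alpha/\gamma)^{2k}\,\|z^*\|_2^2 \;=\; (1-\alpha/\gamma)^{2k}\,\mathrm{dist}_2(0,\mathcal Z^*)^2 ,
\]
and taking square roots yields the claim.

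The main obstacle is the simultaneous spectral calibration: the bidiagonal $A$ must be tuned so that (i) its largest singular value is exactly $\gamma$, (ii) the instance is $\alpha$-sharp with precisely the constant $\alpha$ of Proposition \ref{prop:sharp-ms}, and (iii) the geometric decay ratio of $z^*$ equals $1-\alpha/\gamma$; all three are coupled through the extreme singular values of $A$ and the conditioning of the associated KKT system, and reconciling them is where the real work lies. A secondary subtlety is that Definition \ref{def:span-respecting} is posed for the \emph{unconstrained} bilinear problem whereas the LP enforces $x\geq 0$: I would engineer the instance so that the nonnegativity constraints remain inactive along the reachable subspace, ensuring that the unconstrained gradient dynamics faithfully describe the algorithm's trajectory, and finally verify feasibility, boundedness, and uniqueness of $z^*$ to certify that the construction is a legitimate $\alpha$-sharp LP.
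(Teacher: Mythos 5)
The survey itself contains no proof of this theorem; it is stated with a citation to \cite{applegate2023faster}, so your proposal has to be measured against the argument there, which is of exactly the type you describe: a shift-structured (bidiagonal) matrix with data on the first coordinate, an induction showing that span-respecting iterates remain in the span of the first few coordinates, a geometrically decaying optimal solution, and a tail bound. So your overall strategy is the right one. Your worry about $x\ge 0$ also has a cleaner resolution than the one you sketch: one takes an LP with equality constraints and \emph{free} variables, whose saddle-point form is an unconstrained bilinear problem, so Definition~\ref{def:span-respecting} applies verbatim and no constraint-activity engineering is needed.

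Two of your steps, however, have genuine gaps. (1) Your displayed inequality is false as written: if $z^*_j\propto\rho^j$ in dimension $n$ with $\rho=1-\alpha/\gamma$, the unreachable mass fraction is $\sum_{j>k}(z^*_j)^2/\|z^*\|_2^2=\rho^{2k}\,\bigl(1-\rho^{2(n-k)}\bigr)/\bigl(1-\rho^{2n}\bigr)$, which is \emph{strictly below} $\rho^{2k}$ for every finite $n$, and an LP cannot be infinite-dimensional. The clean constant must come from slack created elsewhere; for instance, with $c=0$ the first primal step cannot move at all, so the iterate at step $k$ is confined to only $k-1$ coordinates, and the spare factor $1/\rho>1$ absorbs the finite-dimensional correction once $n$ is taken large relative to $k$. (2) More seriously, the calibration you defer as ``where the real work lies'' cannot be carried out as you have posed it. For an unconstrained bilinear problem, the normalized duality gap at $z$ equals the KKT residual norm $\|(c-A^\top y,\,Ax-b)\|_2$, so $\alpha$-sharpness is \emph{equivalent} to $\sigma_{\min}^{+}(A)\ge\alpha$. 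For the bidiagonal family $A=dI-sS$ ($S$ the shift), the solution of $Ax=e_1$ decays with ratio $s/d$, while in the large-dimension regime forced by item (1) one has $\sigma_{\min}(A)\to d-s$ and $\|A\|_2\to d+s$; imposing $\sigma_{\min}(A)\ge\alpha$ and $\|A\|_2=\gamma$ therefore caps the decay ratio near $(\gamma-\alpha)/(\gamma+\alpha)$, strictly short of the required $1-\alpha/\gamma$. Nor can a cleverer unconstrained instance evade this: a span-respecting method can emulate conjugate gradient on the normal equations of $Ax=b$ and $A^\top y=c$ (up to constant-factor bookkeeping in the iteration index), attaining $O\bigl(\kappa\,((\kappa-1)/(\kappa+1))^{k}\bigr)\,\mathrm{dist}_2(z^0,\mathcal{Z}^*)$ with $\kappa=\gamma/\alpha$ on every instance satisfying your requirements (i)--(ii), which beats $(1-\alpha/\gamma)^k$ once $k\gtrsim\kappa\log\kappa$. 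A correct completion must therefore relax one of your three constraints and recover the stated normalization by a trade-off: the construction consistent with this technique takes diagonal $\gamma$ and subdiagonal $\gamma-\alpha$, so that the solution decays exactly like $(1-\alpha/\gamma)^j$ and $\sigma_{\min}(A)\ge\alpha$ follows from a Neumann-series bound, at the price that $\|A\|_2$ is of order $2\gamma$ rather than $\gamma$, with the discrepancy absorbed into the constants. That bookkeeping is precisely the content of the proof, and it is the one step your proposal asserts rather than provides.
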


Together with Theorem \ref{thm:adaptive-restart}, Theorem \ref{thm:lb} shows that restarted Halpern PDHG is an optimal FOM for LP (upto log factor), i.e., the convergence rate of restarted Halpern PDHG matches lower bound complexity (upto log factor). Lastly, we want to highlight that such an ``optimal'' argument does not rule out possible better practical algorithms, because it is rooted in worst-case analysis, namely, the rate is optimal only on a constructed worst-case instance.

\subsection{Refined Analysis}\label{sec:refine}
In the previous section, we establish global linear convergence rates for vanilla and restarted PDHG that depend on the sharpness constant $\alpha$. However, the sharpness constant in Proposition \ref{prop:sharp-ndg} and Proposition \ref{prop:sharp-ms} is essentially the reciprocal of global Hoffman constant of the KKT system corresponding to the LP (Proposition \ref{prop:sharp-hoffman})~\cite{applegate2023faster,lu2022infimal}, whereas the Hoffman constant is well-known to be overly conservative and uninformative~\cite{pena2021new}. Indeed, it is highly difficult to provide a simple characterization of the sharpness constant $\alpha$. One characterization for system $Fx=g, \tilde Fx\leq \tilde g$ is described in \cite{pena2021new}:
\begin{equation}\label{eq:hoffman}
    \alpha = \min_{J\in\mathcal S(F,\tilde F)}\min_{\substack{v\in\mathbb R_+^J,\; z\in F(\mathbb R^{n})\\ \|(v,z)\|_2=1,\; \tilde F_J^Tv+Fz-u=0}}\|u\|_2 \ ,
\end{equation}
where $\mathcal S(F,\tilde F)=\{ J\subset \{1,2,...,m\}:\{(\tilde Fx+s,Fx):s\in \mathbb R^m,s_J\geq 0,x\in\mathbb R^n \} \mathrm{\; is\;a\;linear\;subspace}\}$.
Informally speaking, the inner minimization in \eqref{eq:hoffman} computes an extension of minimal positive singular value for a certain matrix, which is specified by an ``active set'' $J$ of constraints. The outer minimization takes the minimum of these extended minimal positive singular values over all possible ``active sets'' (intuitively, it goes through every extreme point, edge, face, etc., of solution set $\mathcal X^*=\{x\in \mathbb R^n: F x= g, \tilde F x \le \tilde g\}$). While the inner minimization is expected when characterizing the behaviors of an algorithm, similar to the strong convexity in a minimization problem, there are usually exponentially many ``active sets'' for a polytope $\mathcal{X}^*$, thus $\alpha$ defined in \eqref{eq:hoffman} is known to be a loose bound and it is generally NP-hard to compute its exact value or even just a reasonable bound.

On the other hand, it is evident that the numerical performance of first-order algorithms does not depend on the overly-conservative global Hoffman constant. The rate derived in Section \ref{sec:theory} can be too loose to interpret the successful empirical behavior of the algorithm. A more refined analysis is favorable to characterize the actual behavior of the algorithm. In this section, we briefly overview three lines of research that aim to overcome this drawback.

\subsubsection{Two-stage Trajectory-based Analysis}
In the context of non-smooth optimization, there has been significant progress in studying the identification properties of first-order methods and achieving fast local linear convergence under partial smoothness~\cite{wright1993identifiable,lewis2016proximal,liang2017activity,liang2017local,liang2018local,poon2020geometry}. However, these results rely on the non-degeneracy condition, which, in the context of LP, implies that the algorithm converges to an optimal solution satisfying strict complementary slackness. Unfortunately, this condition is rarely met in real-world LP instances when using PDHG, limiting the applicability of these theoretical guarantees to LP.

Despite the lack of a non-degeneracy condition, it is still observed that PDHG on LP often exhibits a two-stage behavior~\cite{lu2023geometry}: an initial sublinear convergence followed by a linear convergence. Figure \ref{fig:pdhg} shows the representative behaviors of PDHG on four LP instances from MIPLIB and Netlib. The convergence patterns of the algorithm differ dramatically across these LP instances. The instance \texttt{mad} converges linearly to optimality within only few thousands of steps. Instances \texttt{neos16} and \texttt{gsvm2r13} eventually reach the linear convergence stage but beforehand there exists a relatively flat slow stage. The instance \texttt{sc105} does not exhibit linear rate within twenty thousand iterations. Furthermore, the eventual linear rates are not equivalent: \texttt{gsvm2r13} exhibits much faster linear rate than \texttt{neos16}. Similar observation holds for the ``warm-up" sublinear stage: \texttt{neos16} has much shorter sublinear period than \texttt{gsvm2r13}. The global linear convergence with the conservative Hoffman constant~\cite{lu2022infimal,applegate2023faster} is clearly not enough to interpret the diverse empirical behaviors. 
\begin{figure}[ht]
	\centering
	\hspace{-1Cm}
	\includegraphics[width=0.5\textwidth]{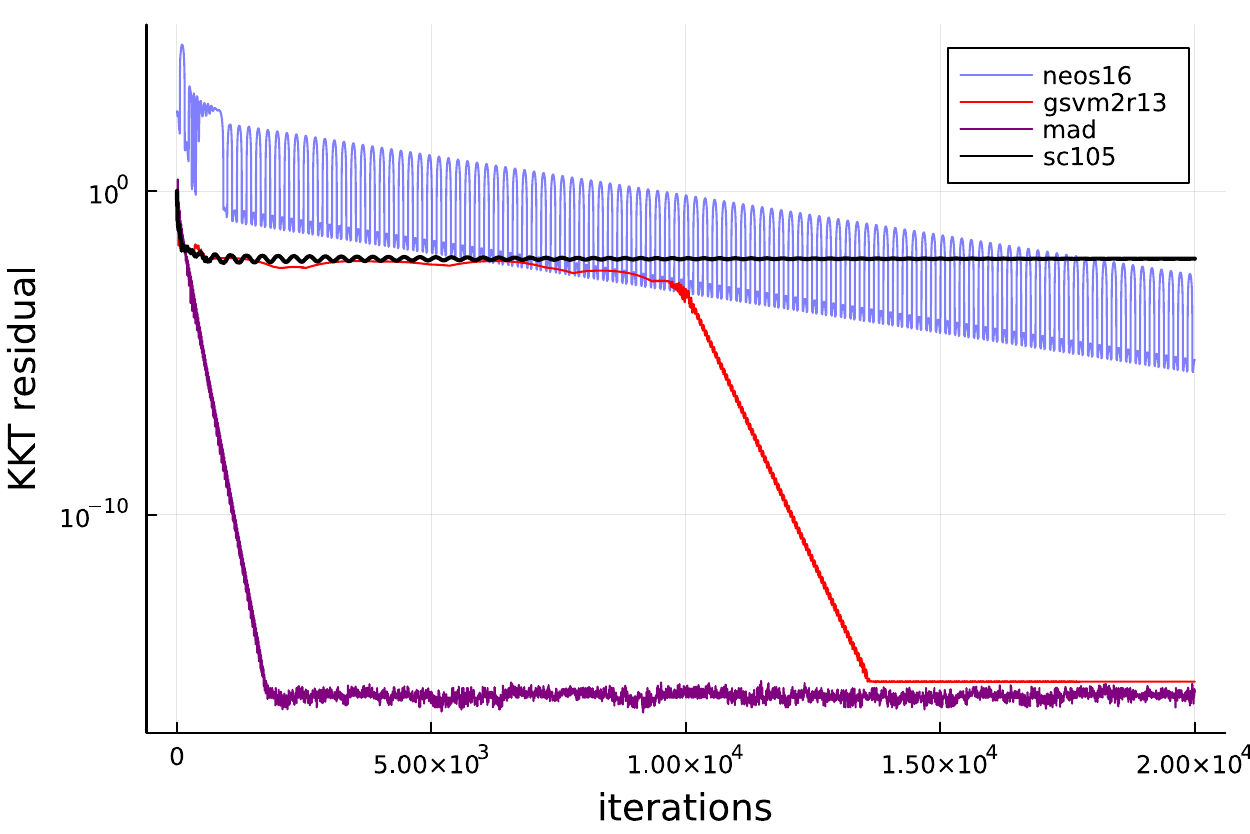}
	\caption{PDHG on four LP relaxation of instances from benchmark sets}
	\label{fig:pdhg}
\end{figure}

The results in \cite{lu2023geometry} demonstrate a two-stage convergence behavior of vanilla PDHG on solving LP:
\begin{itemize}
    \item In the first stage, the algorithm identifies the non-degenerate variables.  This stage finishes within a finite number of iterations, and the convergence rate in this stage is sublinear. The driving force of the first stage is how close the non-degenerate part of the LP is to degeneracy.
    \item In the second stage, the algorithm effectively solves a homogeneous linear inequality system. The driving force of the linear convergence rate is a local sharpness constant for the homogeneous linear inequality system. This local sharpness constant is much better than the global Hoffman constant, and it is a generalization of the minimal non-zero singular value of a certain matrix. Intuitively, this happens due to the structure of the homogeneous linear inequality system so that one just needs to focus on the local geometry around the origin (See~\cite{pena2024easily} for a detailed characterization), avoiding going through the exponentially large boundary set as in the calculation of global Hoffman constant.
\end{itemize}
Such two-stage behavior of restarted PDHG is also documented and analyzed in more recent works~\cite{xiong2024accessible,xiong2025high} under more specific settings such as LP with unique optima and random LP. 

\subsubsection{Geometric Quantity and Average Analysis}
Mostly motivated by the limitations of Hoffman-based complexity, a series of works~\cite{xiong2023computational,xiong2023relation,xiong2024role,xiong2024accessible,xiong2025high} develops novel conditioning measures for LP instances to refine the complexity of (restarted) PDHG. In contrast to the algebraic nature of Hoffman constant, the newly proposed conditioning measures are based on the geometry of the LP instances and enables a tighter and more transparent characterization of the actual behaviour, in line with fruitful condition number theories for (conic) LP~\cite{epelman2000condition,freund1999some,belloni2009geometric,freund2004complexity,freund2003primal,renegar2001mathematical,renegar1988polynomial,renegar1995linear,renegar1993some,renegar1995incorporating,pena2000understanding,vera2007primal,cheung2003unifying}.

In \cite{xiong2023computational,xiong2023relation}, two purely geometric condition measures are introduced: the ``limiting error ratio'' and LP sharpness, and develop new computational guarantees for the restarted PDHG method applied to linear programming (LP). The limiting error ratio captures instance-specific feasibility properties, while LP sharpness quantifies solution stability under objective perturbations. These intrinsic measures of LP instances result in tighter theoretical iteration bounds for PDHG compared to the original Hoffman-based complexity, as validated through constructed test cases and experiments on the MIPLIB dataset.

Despite the tighter characterization provided by the ``limiting error ratio'' and LP sharpness, these measures can still take extreme values, leading to poor theoretical performance of PDHG. To address this, \cite{xiong2024role} introduces the use of level-set geometry to analyze PDHG behavior. Specifically, three geometric measures for level sets—diameter, radius and center, and the Hausdorff distance to optimality—are proposed to evaluate the convergence properties of restarted PDHG and establish computational guarantees. The theoretical results in \cite{xiong2024role} extend to more general conic linear programming problems. Furthermore, the paper demonstrates how rescaling transformations, guided by the central path, can enhance these geometric measures and accelerate convergence.

In \cite{xiong2024accessible}, a computationally accessible iteration bound for restarted PDHG is established for LP with unique optimal solution. The proposed bound depends on intrinsic properties of the LP instance and admits a closed-form expression, enabling practical evaluation and deeper insights into PDHG’s behavior. A two-stage convergence phenomenon related to~\cite{lu2023geometry} is also analyzed for restarted PDHG in this context.

A more recent work~\cite{xiong2025high} is among the research line of average analysis to addresses the theoretical gap between the practical efficiency and worst-case iteration bounds. By employing probabilistic analysis, the study establishes high-probability polynomial-time complexity bounds for restarted PDHG under sub-Gaussian and Gaussian input data models, in contrast to the worst-case data-dependent complexity. More precesily, \cite{xiong2025high} shows that restarted PDHG can find a solution with $\mathrm{dist}(z,\mathcal Z^*)\leq \epsilon$ within
\begin{equation*}
    \widetilde{O}\pran{\pran{n^{2.5}m^{0.5}+n^{0.5}m^{0.5}\log\frac{1}{\epsilon}}\frac{1}{\delta}}
\end{equation*}
iterations with probability at least $1-\delta$. The results provide theoretical justification for the strong empirical performance of restarted PDHG.

\subsubsection{Use of Problem Structure}
As shown in \eqref{eq:hoffman}, the global sharpness constant is often overly conservative in worst-case scenarios. Beyond the analyses based on two-stage approaches and geometric quantities, another active research direction leverages the structure of specialized problems, such as totally unimodular linear programming~\cite{hinder2024worst} and optimal transport~\cite{lu2024pdot}.

A totally unimodular linear program is a special class of LP in which the constraint matrix is totally unimodular, i.e., every square submatrix has a determinant of \( 0 \), \( 1 \), or \(-1\), and has integer right-hand sides and an integer objective vector~\cite{schrijver1998theory}. These LPs are particularly useful in fields such as network flows and combinatorial optimization~\cite{ahuja1988network,schrijver1998theory,korte2011combinatorial,bazaraa2011linear}. In~\cite{hinder2024worst}, the complexity of restarted average PDHG~\cite{applegate2023faster} for solving totally unimodular LPs is analyzed by providing a refined characterization of the sharpness constant for the KKT system. Notably, the sharpness constant for totally unimodular LPs with $m$ constraints can be lower bounded as follows:
\begin{equation*}
    \alpha\geq \Omega\pran{\frac{1}{m^{2.5}H}}
\end{equation*}
where $H\geq \max\{\|b\|_\infty,\|c\|_\infty\}$, and combined with complexity results in \cite{applegate2023faster}, this immediately establishes that the complexity of restarted PDHG for solving totally unimodular LPs is polynomial. More precisely, restarted PDHG can achieve a solution with $\mathrm{dist}_2(z,\mZ^*)\leq \epsilon$ within
\begin{equation*}
    O\pran{Hm^{2.5}\sqrt{\mathrm{nnz}(A)}\log\pran{\frac{mH}{\epsilon}}}
\end{equation*}
iterations, where $\mathrm{nnz}(A)$ represents the number of nonzeros in constraint matrix $A$.

Another example is \cite{lu2024pdot}, which examined the complexity of restarted PDHG for solving optimal transport (OT) problems. Optimal transport, also known as mass transportation, the earth mover's distance, or the Wasserstein-1 (\(W_{1}\)) distance, is a fundamental class of optimization problems that measures the distance between probability distributions~\cite{villani2009optimal,villani2021topics}. First introduced in the 18th century through Monge's pioneering work~\cite{monge1781founding}, OT has since become a cornerstone in various domains, including operations research, economics, machine learning, and image processing~\cite{villani2009optimal,villani2021topics,peyre2019computational}. The discrete OT problem~\cite{kantarovich1939mathematical} is mathematically formulated as:
\begin{equation}\label{eq:ot}
    \begin{aligned}
        \min_{X \geq 0} & \ \langle C, X \rangle \\ 
        \mathrm{s.t.} & \ X \mathbf{1}_n = f \\ 
        & \ X^\top \mathbf{1}_m = g \ ,
    \end{aligned}
\end{equation}
where $C = [C_{ij}]_{1 \leq i \leq m, 1 \leq j \leq n}$ denotes the given non-negative cost matrix, and $f = [f_i]_{i=1}^m$ and $g = [g_j]_{j=1}^n$ are probability vectors representing the row and column marginals, respectively. Here, $\mathbf{1}_n$ and $\mathbf{1}_m$ are vectors of ones with dimensions $n$ and $m$. The objective of the discrete optimal transport (OT) problem is to find a non-negative matrix \( X \), referred to as the transportation plan, that minimizes the total transportation cost \( \langle C, X \rangle \), subject to the constraint that the marginals of \( X \) align with the given vectors \( f \) and \( g \), which represent distributions. In~\cite{lu2024pdot}, it is shown that restarted PDHG achieves a data-independent local complexity of floating-point operations:
\begin{equation*}
    \widetilde{O}\pran{mn(m+n)^{1.5}\log\pran{\frac{1}{\epsilon}}} \ .
\end{equation*}
Furthermore, a data-dependent global floating-point-operations count of 
\begin{equation*}
    \widetilde O\pran{mn(m+n)^{3.5}\Delta + mn(m+n)^{1.5}\log\pran{\frac{1}{\epsilon}}}
\end{equation*}
is established, where \( \Delta \) represents the precision of the data.

In summary, restarted PDHG demonstrates polynomial complexity for specific classes of LPs, such as totally unimodular LPs and optimal transport problems. These polynomial rates can be directly extended to other variants, including restarted (reflected) Halpern PDHG. Such refined characterizations for specialized problems provide theoretical justification for the observed strong numerical performance of PDLP in solving these problems.

\subsection{Infeasibility Detection}\label{sec:infeas}
The convergence results in the previous section require the LP to be feasible and bounded. In practice, it is occasionally the case that the LP instance is infeasible or unbounded, thus infeasibility detection is a necessary feature for any LP solver. In this section, we investigate the behavior of PDHG on infeasible/unbounded LPs, and claim that the PDHG iterates encode infeasibility information automatically.

Consider the primal LP \eqref{eq:primal} and its dual form \eqref{eq:dual}. Farkas’ Lemma~\cite{farkas1902theorie,gale1951linear} states that a feasible solution of \eqref{eq:primal} exists if, and only if, the following set is empty
\begin{equation}\label{eq:primal-infeas}
    \{y\in \mathbb R^m\ |\ b^\top y<0, A^\top y \geq0\} \ .
\end{equation}
We refer to the elements of this set as certificates of primal infeasibility, as their existence provides a guarantee that the primal problem is infeasible. Similarly, the certificates of infeasibility for the dual problem~\eqref{eq:dual} are defined analogously, ensuring that the dual problem is also identified as infeasible under corresponding conditions:
\begin{equation}\label{eq:dual-infeas}
    \{x\in \mathbb R^n\ |\ c^\top x<0, Ax=0, x\geq0\} \ .
\end{equation}

The easiest way to describe the infeasiblity detection property of PDHG, i.e., ability to find elements in either \eqref{eq:primal-infeas} and \eqref{eq:dual-infeas}, is perhaps to look at it from an operator perspective. More formally, we use $\mathrm{PDHG}(\cdot)$ to represent the operator for one step of PDHG iteration for solving \eqref{eq:minmax}, i.e., $z^{k+1}=\mathrm{PDHG}(z^k)$ where $\mathrm{PDHG}$ is specified by \eqref{eq:pdhg}. Next, we introduce the infimal displacement vector of the operator $\mathrm{PDHG}$, which plays a central role in the infeasibility detection of PDHG:
\begin{mydef}
    For PDHG operator $\mathrm{PDHG}(\cdot)$ induced by PDHG on \eqref{eq:minmax},  we call $$v:=\argmin_{z\in\mathrm{range}(\mathrm{PDHG}-I)} \|z\|_2^2$$ its infimal displacement vector (which is uniquely defined~\cite{pazy1971asymptotic}).
\end{mydef}
It turns out that if the LP instance is primal (or dual) infeasible, then the dual (or primal) variables diverge like a ray with direction $v$. Furthermore, the corresponding dual (or primal) part of $v$ provides an infeasibility certificate for the primal. Table \ref{tab:infeas} summaries such results. More formally, 

\begin{thm}[Behaviors of PDHG for infeasible LP {\cite{applegate2024infeasibility}}]\label{thm:full-infeas}
    Consider the primal problem \eqref{eq:primal} and dual problem \eqref{eq:dual}. Assume $\eta< \frac{1}{\|A\|}$, let $\mathrm{PDHG}(\cdot)$ be the operator induced by PDHG on \eqref{eq:minmax}, and let $\{z^k=(x^k,y^k)\}_{k=0}^{\infty}$ be a sequence generated by the fixed-point iteration from an arbitrary starting point $z^0$. Then, one of the following holds:

    (a). If both primal and dual are feasible, then the iterates $(x^k,y^k)$ converge to a primal-dual solution $z^*=(x^*,y^*)$ and $v=(\mathrm{PDHG}-I)(z^*)=0$.

    (b). If both primal and dual are infeasible, then both primal and dual iterates diverge to infinity. Moreover, the primal and dual components of the infimal displacement vector $v=(v_x,v_y)$ give certificates of dual and primal infeasibility, respectively.

    (c). If the primal is infeasible and the dual is feasible, then the dual iterates diverge to infinity, while the primal iterates converge to a vector $x^*$. The dual-component $v_y$ is a certificate of primal infeasibility. Furthermore, there exists a vector $y^*$ such that $v=(\mathrm{PDHG}-I)(x^*,y^*)$.

    (d). If the primal is feasible and the dual is infeasible, then the same conclusions as in the previous item hold by swapping primal with dual.
\end{thm}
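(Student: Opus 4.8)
The engine of the proof is the firm non-expansiveness of the PDHG operator in the $\|\cdot\|_P$ norm (Lemma \ref{lem:operator}) together with the classical asymptotic theory of non-expansive operators on the Hilbert space $\mathbb{R}^{m+n}$ equipped with the inner product $\langle u,v\rangle_P=\langle u,Pv\rangle$. The first thing I would record is the behaviour of the infimal displacement vector $v$: since $\mathrm{range}(\mathrm{PDHG}-I)$ has convex closure (because $I-\mathrm{PDHG}$ is again firmly non-expansive, hence essentially a resolvent of a maximal monotone operator), $v$ is the $\|\cdot\|_P$-projection of $0$ onto that closure and is uniquely attained. The theory of Pazy and Baillon--Bruck--Reich then yields the two limits
\begin{equation*}
z^{k+1}-z^{k}=(\mathrm{PDHG}-I)(z^{k})\to v,\qquad \tfrac{1}{k}\bigl(z^{k}-z^{0}\bigr)\to v ,
\end{equation*}
so the iterates drift asymptotically like $z^{k}\approx z^{0}+kv$. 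In particular $v=0$ precisely when the sequence does not diverge linearly, and when $v\neq0$ we have $\|z^{k}\|\to\infty$ along the direction $v$.

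Next I would set up the dictionary between feasibility and fixed points. If both \eqref{eq:primal} and \eqref{eq:dual} are feasible, strong duality produces a saddle point of $L$, which is exactly a fixed point $z^{*}$ of $\mathrm{PDHG}$; then $(\mathrm{PDHG}-I)(z^{*})=0\in\mathrm{range}(\mathrm{PDHG}-I)$ forces $v=0$, and the convergence consequence of firm non-expansiveness listed after Lemma \ref{lem:operator} gives $z^{k}\to z^{*}$, proving (a). The content of (b)--(d) is the converse direction: when no fixed point exists, $v$ is nonzero in the appropriate block, and the blocks of $v=(v_{x},v_{y})$ must be matched to Farkas certificates.

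The crux is to characterise $v=(v_{x},v_{y})$ from the update rule \eqref{eq:pdhg-thm}. I would first argue that the shifted sequence $z^{k}-kv$ stays bounded and, along a subsequence, converges to some $\bar z=(\bar x,\bar y)$; substituting the ansatz $z^{k}\approx\bar z+kv$ into \eqref{eq:pdhg-thm} and passing to the limit produces the limiting relations. The dual update gives $v_{y}=\eta\,(b-A\tilde x)$, where $\tilde x$ is the limit of the bounded primal part, while the boundedness of the primal iterates against the projection onto $\mathbb{R}^{n}_{+}$ forces the growing term $\eta A^{\top}y^{k}\approx\eta k\,A^{\top}v_{y}$ to point only in directions that are clipped away, so that $A^{\top}v_{y}$ has a definite coordinatewise sign; after the appropriate normalisation this is exactly the homogeneous condition $A^{\top}y\ge 0$ defining the primal-infeasibility cone \eqref{eq:primal-infeas}. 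The accompanying strict inequality $b^{\top}y<0$ I would extract from the variational characterisation of the minimal displacement, namely $\langle(\mathrm{PDHG}-I)(z),v\rangle_{P}\ge\|v\|_{P}^{2}>0$ for every $z$, evaluated along the iterates. Symmetric reasoning on the primal update yields $Av_{x}=0$, together with $v_{x}\ge 0$ (inherited from $x^{k}\ge 0$) and $c^{\top}x<0$, giving the dual-infeasibility certificate \eqref{eq:dual-infeas}. Cases (c) and (d) then correspond to exactly one of the blocks $v_{x},v_{y}$ vanishing (the feasible subproblem has convergent iterates, so its displacement is zero), and case (b) to both blocks being nonzero.

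The main obstacle is the analytic step underlying the previous paragraph. Firm non-expansiveness by itself guarantees convergence only when a fixed point exists, so in the fixed-point-free (infeasible) regime I must independently establish that $\mathrm{range}(\mathrm{PDHG}-I)$ is closed, so that $v$ is attained, and that the remainder $z^{k}-kv$ does not itself escape to infinity. This is where the polyhedral structure of LP is essential: $\mathrm{PDHG}$ is piecewise affine, its range decomposes into finitely many polyhedral pieces, and closedness follows from this finiteness rather than from any general Hilbert-space fact. A secondary difficulty is that the primal update is the nonsmooth projection $\mathrm{proj}_{\mathbb{R}^{n}_{+}}$, so the sign conditions on $A^{\top}v_{y}$ cannot be read off by a naive linear limit; I would handle this by partitioning the coordinates according to their eventual activity (whether $x^{k}_{i}$ is pinned at $0$ or stays interior for large $k$) and passing to the limit on each block separately. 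Once these two structural points are in place, the remaining sign bookkeeping linking $v_{y}$ to \eqref{eq:primal-infeas} and $v_{x}$ to \eqref{eq:dual-infeas} is routine.
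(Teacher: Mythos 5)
The survey states Theorem \ref{thm:full-infeas} without proof, citing \cite{applegate2024infeasibility}, and your skeleton — firm non-expansiveness (Lemma \ref{lem:operator}) plus Pazy/Baillon--Bruck--Reich asymptotics, attainment of $v$ via polyhedral closedness of $\mathrm{range}(\mathrm{PDHG}-I)$, coordinatewise activity analysis of the projection, and the Farkas dictionary — is indeed the route of the cited work; your treatment of case (a) is complete. However, two steps you lean on are genuinely missing. The first is a ray-invariance lemma: if $v$ is attained, say $\mathrm{PDHG}(z^*)=z^*+v$, then firm non-expansiveness together with minimality of $v$ forces $\mathrm{PDHG}(z^*+kv)=z^*+(k+1)v$ for all $k\ge 0$ (with $d$ the displacement at $z^*+v$, Lemma \ref{lem:operator} gives $\|d\|_P^2\le\|v\|_P^2-\|d-v\|_P^2$ while minimality gives $\|d\|_P\ge\|v\|_P$, hence $d=v$; induct). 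Without this lemma your two central claims are unsupported: (i) boundedness of $z^k-kv$ is not automatic but follows from non-expansiveness against this invariant ray, and (ii) ``substituting the ansatz and passing to the limit'' is not legitimate, because $\mathrm{PDHG}$ does not commute with translation by $v$, so $z^{k_j}-k_jv\to\bar z$ says nothing directly about $\mathrm{PDHG}(z^{k_j})-(k_j+1)v$. The clean derivation reads the identities $Av_x=0$, $v_y=\eta(b-Ax^*)$, $A^\top v_y\le 0$, and the complementarity $x^{*\top}A^\top v_y=0$ exactly along the invariant ray; these also yield the strict inequalities immediately (e.g.\ $b^\top v_y=\|v_y\|_2^2/\eta>0$, so $-v_y$ lies in \eqref{eq:primal-infeas}), whereas your proposed route via $\langle(\mathrm{PDHG}-I)(z),v\rangle_P\ge\|v\|_P^2$ does not isolate $b^\top v_y$ once the cross terms of the $P$-inner product are expanded: the term $-(x^{k+1})^\top A^\top v_y$ it produces has the wrong sign to be discarded.

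The second gap is in cases (c)/(d), where you write that ``the feasible subproblem has convergent iterates, so its displacement is zero.'' This inverts the logic and skips a substantive part of the theorem. What your machinery actually yields is the other direction: dual feasibility forces $v_x=0$ (otherwise $v_x$ would be a dual-infeasibility certificate, contradicting Farkas), hence the primal iterates are bounded and have convergent subsequences. But the theorem asserts that $x^k$ \emph{converges}, and Fej\'er monotonicity of $z^k-kv$ plus asymptotic regularity does not upgrade boundedness of a block to convergence; nor are the primal iterates running PDHG on any self-contained feasible problem — they are coupled to the divergent dual sequence through $A^\top y^k$. Establishing this convergence (and the companion claim in (c) that $v$ is attained at $(x^*,y^*)$ for the actual limit $x^*$) requires a further argument exploiting the piecewise-affine structure — along the drift direction the active pieces of the operator eventually stabilize, after which the centered iterates follow a firmly non-expansive map with a fixed point — and this is one of the harder steps in \cite{applegate2024infeasibility}. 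Your sketch should flag it as a step to be proved rather than fold it into the conclusion.
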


\begin{table}[h]
\centering
{\large
\begin{tabular}{|c|c|c|}
\hline
\diagbox[width=10em]{\textbf{Primal}}{\textbf{Dual}}                   & \textbf{Feasible} & \textbf{Infeasible} \\ \hline
\textbf{Feasible}   &  $x^k,y^k$ both converge              &  $x^k$ diverges, $y^k$ converges    \\ \hline
\textbf{Infeasible} &  $x^k$ converges, $y^k$ diverges                 &  $x^k,y^k$ both diverge                    \\ \hline
\end{tabular}
}
\caption{Behavior of PDHG for solving \eqref{eq:minmax} under different feasibility assumptions.}
\label{tab:infeas}
\end{table}

Furthermore, one can show that the difference of iterates and the normalized iterates converge to the infimal displacement vector $v$ with sublinear rate:
\begin{thm}[{\cite{applegate2024infeasibility,davis2016convergence}}]\label{thm:infeas}
    Let $\mathrm{PDHG}(\cdot)$ be the operator induced by PDHG on \eqref{eq:minmax} with step-size $\eta<\frac{1}{\|A\|_2}$. Then there exists a finite $z^*$ such that $\mathrm{PDHG}(z^*)=z^*+v$ and for any such $z^*$ and all $k$:

    (a) (Difference of iterates)
    \begin{equation*}
        \min_{j\leq k}\|v-(z^{j+1}-z^j)\|_{P}\leq \frac{1}{\sqrt k}\|z^0-z^*\|_{P}\ ,
    \end{equation*}

    (b) (Normalized iterates)
    \begin{equation*}
        \left\|v-\frac{1}{k}(z^k-z^0)\right\|_{P}\leq \frac{2}{k}\|z^0-z^*\|_{P}\ .
    \end{equation*}
\end{thm}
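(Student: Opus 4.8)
The plan is to work throughout in the Hilbert space induced by the inner product $\langle u,w\rangle_P := \langle u, Pw\rangle$, in which the operator $T:=\mathrm{PDHG}$ is firmly non-expansive by Lemma \ref{lem:operator}, and to interpret $v$ as the (unique) element of minimal $P$-norm in $\overline{\mathrm{range}(T-I)}$, attained at the $z^*$ whose existence is furnished by the statement (equivalently by the structural description in Theorem \ref{thm:full-infeas} and \cite{pazy1971asymptotic}), so that $(T-I)z^* = v$. The whole argument then reduces to comparing the actual orbit $z^k$ against the \emph{explicit} reference orbit $\zeta^k := z^* + kv$.

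The key lemma to establish first is that $\zeta^k$ is itself a genuine $T$-orbit, i.e. $T(z^*+kv) = z^*+(k+1)v$ for every $k\ge 0$, so that its one-step displacement is exactly $v$ at every step. I would prove this by induction: the base case is the defining relation $Tz^*=z^*+v$, and for the step, non-expansiveness of $T$ between $\zeta^{k+1}$ and $\zeta^k$ together with the inductive hypothesis $T\zeta^k=\zeta^{k+1}$ gives $\|(T-I)\zeta^{k+1}\|_P \le \|\zeta^{k+1}-\zeta^k\|_P = \|v\|_P$; since $(T-I)\zeta^{k+1}\in\mathrm{range}(T-I)$ while $v$ is the minimal-$P$-norm element of the closure, this forces equality of norms, and uniqueness of the minimal-norm element of the closed \emph{convex} set $\overline{\mathrm{range}(T-I)}$ (convexity of this range closure for non-expansive $T$ being classical, see \cite{pazy1971asymptotic,bauschke2019convex}) yields $(T-I)\zeta^{k+1}=v$.

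With the explicit orbit in hand, part (b) is immediate: non-expansiveness of $T$ applied to the two orbits gives $\|z^k-\zeta^k\|_P \le \|z^0-\zeta^0\|_P = \|z^0-z^*\|_P$, and then the triangle inequality $\|z^k-z^0-kv\|_P \le \|z^k-z^*-kv\|_P + \|z^0-z^*\|_P \le 2\|z^0-z^*\|_P$ followed by division by $k$ gives the claimed $\tfrac{2}{k}$ bound on the normalized iterates. For part (a) I would instead invoke the \emph{firm} non-expansiveness inequality of Lemma \ref{lem:operator} between $z^k$ and $\zeta^k$, which yields $\|z^{k+1}-\zeta^{k+1}\|_P^2 \le \|z^k-\zeta^k\|_P^2 - \|(z^{k+1}-z^k)-v\|_P^2$; telescoping from $0$ to $k-1$ collapses the first terms and produces $\sum_{j=0}^{k-1}\|(z^{j+1}-z^j)-v\|_P^2 \le \|z^0-z^*\|_P^2$, whence $\min_{j<k}\|v-(z^{j+1}-z^j)\|_P^2 \le \tfrac1k\|z^0-z^*\|_P^2$ and taking square roots delivers the $\tfrac{1}{\sqrt k}$ rate.

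The main obstacle is the key lemma, and within it the two supporting facts: that $v$ must be understood as the minimal-$P$-norm displacement (reconciling this with the $\ell_2$ definition stated earlier, or simply adopting the $P$-norm geometry consistently, since $T$ is non-expansive only with respect to $\|\cdot\|_P$), and that $\overline{\mathrm{range}(T-I)}$ is convex so that its minimal-norm element is unique. Once the reference orbit $\zeta^k=z^*+kv$ is shown to be a true $T$-trajectory with constant displacement $v$, both rates follow from a single application each of non-expansiveness and firm non-expansiveness together with elementary telescoping, so no delicate estimation remains.
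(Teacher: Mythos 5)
Your proof is correct: the paper itself states Theorem \ref{thm:infeas} without proof, deferring to \cite{applegate2024infeasibility,davis2016convergence}, and your argument — first establishing that $\zeta^k = z^* + kv$ is an exact $\mathrm{PDHG}$-orbit via minimality and uniqueness of the infimal displacement vector in the closed convex set $\overline{\mathrm{range}(\mathrm{PDHG}-I)}$, then comparing $z^k$ to $\zeta^k$ once by firm non-expansiveness with telescoping (part (a)) and once by plain non-expansiveness plus the triangle inequality (part (b)) — is essentially the standard argument in those references. The two caveats you flag are handled appropriately: attainment of $v$ (existence of a finite $z^*$) is part of what the statement supplies and is established by Theorem \ref{thm:full-infeas}, and adopting the $\|\cdot\|_P$ geometry throughout is indeed the consistent reading, since the operator is firmly non-expansive only with respect to that norm.
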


Theorem \ref{thm:full-infeas} and Theorem \ref{thm:infeas} show that the difference of iterates and the normalized iterates of PDHG can recover the infeasibility certificates with sublinear rate. Notice that the first result in Theorem \ref{thm:infeas} is consistent with the feasible and bounded case in Theorem \ref{thm:last}, in which case the infimal displacement vector $v=0$, and the PDHG movement decays at the rate of $O(1/\sqrt{k})$ (see \eqref{eq:last-iteration-gap}). 

While the normalized iterates have faster sub-linear convergence than the difference of iterates, one can show that the difference of iterates exhibits linear convergence in the local regime to the infimal displacement vector under additional non-degeneracy conditions~\cite{applegate2024infeasibility}. Thus, both the difference of iterates and normalized iterates are checked in infeasiblity detection of PDLP~\cite{applegate2024infeasibility,lu2023cupdlp,lu2023cupdlpc}. More recently, \cite{lu2024restarted} shows that restarted (reflected) Halpern PDHG achieves stronger theoretical guarantee for infeasibility detection of LP in the sense that
\begin{itemize}
    \item Linear convergence is achieved without any additional regularity assumptions;
    \item The linear rate achieved is an acceleration over the results in~\cite{applegate2024infeasibility}, as it eliminates a square term in the condition number.
\end{itemize}

\section{GPU-based mathematical programming beyond LP}\label{sec:beyondlp}
This section provides a brief overview of the recent advancements in GPU-based optimization solvers beyond linear programming. Extensions to convex quadratic programming (QP) are detailed in Section \ref{sec:qp}, while developments in semi-definite programming (SDP) are summarized in Section \ref{sec:sdp}. The section concludes with discussion on leveraging GPUs to accelerate solvers based on interior-point methods (IPMs) for solving conic programming and nonlinear programming problems in Section \ref{sec:ipm}.

\subsection{Convex Quadratic Programming}\label{sec:qp}
Convex quadratic programming (QP) is a direct extension of LP to minimize a quadratic objective over linear constraints. More specifically, a form of QP is 
\begin{equation}\label{eq:qp}
    \begin{aligned}[c]
    \min_{x\in \mathbb R^n}~~ &~ \frac12 x^\top Qx+c^\top x \\
\text{s.t.}~~ &~ Ax\leq b \ ,% \\
% & ~ x\geq 0 \ ,
    \end{aligned}
\end{equation}
where $Q\in \mathbb R^{n\times n}$ is a positive semi-definite matrix, $A\in\mathbb R^{m\times n}$, $b\in \mathbb R^m$ and $c\in \mathbb R^n$.

{OSQP is a first-order method solver designed for convex QP based on the Alternating Direction Method of Multipliers (ADMM)~\cite{stellato2020osqp}. The GPU implementation of OSQP incorporates a conjugate gradient (CG) method on GPUs for solving the linear systems that arise in each iteration. However, OSQP's GPU performance does not always surpass its CPU counterpart. The primary reason lies in the frequent CPU-GPU communications during the solving of linear systems. The communication latency is likely to diminish the expected speedup.}

In \cite{lu2023practical}, the authors design and analyze a first-order method for QP, dubbed restarted accelerated primal-dual hybrid gradient (rAPDHG) to solve the primal-dual form of \eqref{eq:qp}:
\begin{equation*}
    \min_x\max_{y\geq 0}\ \frac{1}{2}x^\top Qx+c^\top x+y^\top Ax-b^\top y \ .
\end{equation*}
rAPDHG adds two major enhancements, restart and momentum, upon vanilla PDHG. Hereby we elaborate the intuition of these two enhancements: the matrix $Q$ is usually not full rank, and there are two orthogonal subspaces in the primal space: a linear subspace $\text{ker}(Q)$, and a quadratic subspace $\text{range}(Q)$. Along the linear subspace, the problem is essentially an LP, and restarted PDHG achieves the optimal rate. Along the quadratic subspace, the problem is quadratic; an optimal algorithm should utilize momentum/acceleration, for example, accelerated PDHG~\cite{chen2014optimal}. Thus, both restarting and acceleration are essential.

A GPU-based QP solver \href{https://github.com/jinwen-yang/PDQP.jl}{PDQP} is developed based on rAPDHG~\cite{lu2023practical}. Numerical experiments on standard benchmark datasets and large-scale synthetic instances demonstate superior performance of PDQP over SCS and OSQP on larger instances.

In \cite{huang2024restarted}, the authors introduce a restarted primal-dual hybrid conjugate gradient (\href{https://github.com/Huangyc98/PDHCG.jl}{PDHCG}) method for solving \eqref{eq:qp}, which incorporates conjugate gradient (CG) techniques to address the primal subproblems inexactly. It is demonstrated that PDHCG maintains a linear convergence rate with an improved convergence constant while its GPU implementation shows superior empirical performances compared to PDQP.

\subsection{Semi-Definite Programming}\label{sec:sdp}
Semi-definite programming (SDP) is another fundamental class of optimization problems with wide-ranging applications. Despite its versatility, solving large-scale SDP problems has long been hindered by significant computational and memory demands, especially for dense matrix formulations and expensive computations. More formally, consider SDP of the form
\begin{equation}\label{eq:sdp}
    \begin{aligned}[c]
    \min_{X\in \mathbb R^{n\times n}}~~ &~ \mathrm{tr}(CX) \\
\text{s.t.}~~ &~ \mathcal A(X)= b \\
& ~ X\succeq 0 \ ,
    \end{aligned}
\end{equation}
where $C\in\mathbb R^{n\times n}$ is a symmetric matrix, $b\in \mathbb R^m$ and $\mathcal A:\mathbb R^{n\times n}\rightarrow \mathbb R^m$ is a linear functional, mapping a matrix variable to a vector.

One breakthrough on solving large-scale SDP is the introduction of low-rank framework, i.e., Burer-Monteiro factorization~\cite{burer2003nonlinear,burer2005local}. This approach factorizes the matrix variable $X=FF^\top$, where $F\in\mathbb R^{n\times k}$ while rank $k$ is significantly smaller than the number of variables $n$. Despite the introduction of non-convexity, the adoption of Burer-Monteiro approach significantly reduces memory usage by storing only the low-rank factor, and also reduces computational cost, eliminating the need of projections onto positive semi-definite cones.

In~\cite{han2024accelerating}, a new GPU-based SDP solver, \href{https://github.com/COPT-Public/cuLoRADS}{cuLoRADS}, is introduced. cuLoRADS uses the Burer-Monteiro factorization together with ALM~\cite{burer2006computational}/ADMM~\cite{han2024low} frameworks. The solver runs low-rank ALM during initial stage and switches to low-rank ADMM once the solution is near-optimal. Leveraging GPU parallelism, cuLoRADS achieves remarkable scalability, solving certain SDPs with matrix dimensions as large as 170 million $\times$ 170 million within minutes. More recently, there emerges other GPU-based solvers such as cuHALLaR~\cite{aguirre2025cuhallar} and ALORA~\cite{ding2025new} for solving low-rank SDP via ALM framework.

\subsection{Conic Programming and Nonlinear Programming}\label{sec:ipm}
In this section, we review recent advancements in GPU-based solvers for solving conic programming and nonlinear programming. We begin with FOM-based solver PDCS for conic linear programming, followed by discussion of IPM-based solvers CuClarabel for conic quadratic programming, and MadNLP for nonlinear programming. 

{\href{https://www.cvxgrp.org/scs/}{SCS}~\cite{o2016conic,o2021operator} is a first-order method solver for large-scale convex cone programs:
\begin{equation}\label{eq:qlp}
    \begin{aligned}[c]
    \min_{x\in \mathbb R^n}~~ &~ \frac{1}{2}x^\top Qx+c^\top x \\
\text{s.t.}~~ &~ Ax=b \\
& ~ x\in\mathcal K \ ,
    \end{aligned}
\end{equation}
SCS is based on the ADMM applied to the homogeneous self-dual embedding of the primal-dual pair. This embedding allows SCS to provide not only primal-dual solutions but also certificates of infeasibility or unboundedness. Its GPU implementation supports acceleration via iterative methods such as conjugate gradient when solving the linear systems arising in each ADMM iteration. However, the performance gains from GPU acceleration can often be limited by data transfer overhead between the CPU and GPU.}

{\href{https://github.com/ZikaiXiong/PDCS}{PDCS}~\cite{lin2025pdcs} is also a GPU-based solver tailored for large-scale conic linear programming, i.e., with $Q=0$ in \eqref{eq:qlp}.
Built upon PDHG, PDCS incorporates several practical enhancements, including adaptive Halpern restarts, diagonal rescaling, and efficient projection algorithms based on bisection. It supports a broad range of cones such as nonnegative, second-order, exponential, and their Cartesian products. The GPU implementation, cuPDCS, leverages customized parallelism strategies at the grid, block, and thread levels to accelerate matrix-vector operations and cone projections. cuPDCS achieves strong scalability and performance on diverse conic applications, with numerical experiments demonstrating its efficiency and robustness across benchmarks such as Fisher markets, Lasso regression, and portfolio optimization.

Additionally, there are recent advancements in GPU-based solvers based on interior-point methods (IPMs). While IPMs are not classified as first-order methods and thus fall outside the primary scope of this work, we believe that providing an overview here can offer valuable insights and inspire future developments in the broader field of GPU-based solvers.}

\href{https://github.com/cvxgrp/CuClarabel}{CuClarabel}~\cite{chen2024cuclarabel} is a GPU implementation of \href{https://clarabel.org/stable/}{Clarabel}~\cite{goulart2024clarabel}, an open-source IPM-based solver for conic programming.
(Cu)Clarabel supports solving multiple cones, including second-order cones, positive semi-definite cones and exponential and power cones. To efficiently handle the linear systems arising in the IPM iterations, CuClarabel leverages \href{https://docs.nvidia.com/cuda/cudss/index.html}{cuDSS}, a recently released CUDA direct sparse system solver. This GPU-based approach results in significant performance improvements, with CuClarabel demonstrating remarkable speedups compared to the CPU-based Clarabel.

\href{https://github.com/MadNLP/MadNLP.jl}{MadNLP}~\cite{shin2023accelerating,shin2020graph} is a GPU-based nonlinear programming (NLP) solver designed to address the challenges of large-scale nonlinear optimization problems:
\begin{equation}\label{eq:nlp}
    \begin{aligned}[c]
    \min_{x\in \mathbb R^n}~~ &~ f(x) \\
\text{s.t.}~~ &~ g(x)\leq 0  \ ,
    \end{aligned}
\end{equation}
MadNLP utilizes a condensed-space interior-point method~\cite{pacaud2024accelerating} to reduce the reliance on numerical pivoting, which is often a computational bottleneck in traditional IPM implementations. By streamlining the handling of equality and inequality constraints, MadNLP ensures robust and efficient numerical performance. Leveraging the parallel computing capabilities of modern GPUs, MadNLP delivers substantial speedups compared to CPU-based solvers such as IPOPT on large-scale applications such as power systems optimization, where the solver demonstrates an order-of-magnitude improvement in efficiency. By maintaining critical computational workflows entirely within GPU memory and optimizing data movement, MadNLP excels in solving complex, large-scale problems with remarkable speed and scalability.

\section{Conclusions and Open Questions}
This survey provides an overview of recent advancements in GPU-based solvers for mathematical programming, with a primary focus on linear programming and its extensions. We begin by comparing CPU and GPU architectures, emphasizing the significant speedups that GPUs offer for matrix-vector multiplications.
We then delve into the design of first-order methods (FOMs) for LP, weighing the advantages and disadvantages of various algorithms, ultimately leading to the PDHG algorithm. Next, we discuss practical enhancements beyond vanilla PDHG, followed by empirical comparisons of cuPDLP, its CPU counterpart PDLP, and the commercial solver Gurobi, demonstrating how GPUs can serve as powerful engines for large-scale LPs.
Additionally, we provide an overview of the theoretical foundations of PDHG for LP, drawing from the authors’ own insights into these problems. Finally, we explore recent developments in GPU-based solvers for quadratic, conic, semidefinite, and nonlinear programming, showcasing the broader landscape of GPU-based mathematical optimization. Just as GPUs have revolutionized deep learning by efficiently handling vast amounts of parallel computations, they are now reshaping the field of optimization by unlocking new levels of speed and scalability.

Despite the significant recent advancements, challenges and open questions persist in the development of GPU-based first-order methods for linear programming and beyond. Some of the key challenges include:
\begin{itemize}
    \item {\bf Adaptive step-size.} Step-size selection plays a crucial role in accelerating the convergence of first-order methods for large-scale linear programming. While constant step-size ensures theoretical convergence guarantees, it often underperforms compared to adaptive heuristics in practice. Conversely, existing adaptive heuristics, such as those used in PDLP, exhibit strong empirical performance but lack rigorous worst-case complexity guarantees. Developing a new adaptive step-size strategy that enjoys provable guarantees and practical efficiency could bridge this gap, offering both theoretical soundness and robust performance.
    \item {\bf Adaptive diagonal preconditioning.} Diagonal preconditioning is essential for improving the practical convergence of first-order methods by rescaling variables and constraints to mitigate ill-conditioning. However, most existing techniques, such as Ruiz equilibration and Chambolle-Pock scaling in PDLP, are applied only once before optimization begins, without adapting to changes in problem structure during iterations. An open question is how to develop an adaptive diagonal preconditioning scheme that dynamically adjusts throughout optimization to further enhance convergence. Key challenges include designing efficient and stable update rules that avoid excessive computational overhead, ensuring that adaptive scaling does not introduce oscillations or instability. A well-designed adaptive preconditioner could lead to faster convergence and improved numerical stability for FOM-based LP solvers, particularly in highly ill-conditioned or imbalanced problem instances.
    \item {\bf Presolve tailored for FOMs.} As an essential component of mature LP solvers, presolving simplifies problem instances by reducing dimensionality, detecting infeasibility, and improving numerical stability. Traditional presolve techniques are designed for simplex and interior-point methods, where factorization-based operations can efficiently eliminate redundant constraints and variables. However, first-order methods (FOMs) rely solely on matrix-vector multiplications and are highly sensitive to conditioning and sparsity structure, making standard presolve techniques less directly applicable. An open question is how to design presolve strategies specifically tailored for FOMs, ensuring that reductions do not introduce costly projections or complicate the iterative updates. A principled approach to presolving for FOMs could significantly enhance their efficiency and scalability for large-scale LP problems.
    \item {\bf GPU-based crossover.} In traditional LP solvers, the crossover procedure converts the approximate solution obtained by first-order or interior-point methods into a basic feasible solution (BFS), which is crucial for warm-starting simplex-based refinement or integrating with mixed-integer programming (MIP) solvers. However, existing crossover techniques rely heavily on factorization and pivoting, which are inherently sequential and memory-intensive, making them inefficient for GPU architectures. An open question is how to design a GPU-friendly crossover procedure that efficiently transitions solutions from first-order methods to a BFS while leveraging parallel computation. A scalable, GPU-optimized crossover strategy would enable seamless integration of FOMs with traditional MIP solvers, improving both solution accuracy and solver interoperability in large-scale optimization tasks.
\end{itemize}

As GPU technology continues to evolve, further research and innovation will be essential to fully unlock its potential for large-scale mathematical programming. Advancements in algorithm design and computing techniques will play a crucial role in overcoming existing challenges and expanding the applicability of GPU-based solvers. By bridging the gap between hardware architectures and mathematical programming needs, it is promising that GPUs could become a transformative force in large-scale optimization, enabling the solution of increasingly complex problems across diverse domains.

\section*{Acknowledgements}

The authors would like to thank many colleagues for their valuable feedback that shapes this paper. In particular, we are deeply grateful to Robert M. Freund for carefully proofreading the manuscript and offering numerous insightful comments and suggestions across various sections. We are  indebted to Miles Lubin, who essentially provided a ``referee report'', highlighting many ways to improve the rigor and overall quality of the paper. We also thank Ed Rothberg for his substantial feedback on modern LP solvers and for the stimulating discussion on the fundamental advantages of GPUs for different LP algorithms, which ultimately led to a complete rewrite of the introduction. 

We would additionally like to thank David Applegate, Burcin Bozkaya, Mateo Diaz, Dongdong Ge, Lin Gui, Alex Fender, Oliver Hinder, Qi Huangfu, Tianhao Liu, Chris Maes, Brendan O'Donoghue, Zedong Peng, Warren Schudy, Zikai Xiong, and Yinyu Ye for their support and helpful thoughts and feedback at various stages of developing this survey.

As a rapidly evolving area, many of the references cited and discussed in this survey are available as preprints or as research blogs, including several by the authors. Most of these preprints are in the review process for formal publication.

Haihao Lu is supported by AFOSR Grant No. FA9550-24-1-0051 and ONR Grant No. N000142412735. Jinwen Yang is supported by AFOSR Grant No. FA9550-24-1-0051.

\bibliographystyle{amsplain}
\bibliography{ref-papers}

\providecommand{\bysame}{\leavevmode\hbox to3em{\hrulefill}\thinspace}
\providecommand{\MR}{\relax\ifhmode\unskip\space\fi MR }
% \MRhref is called by the amsart/book/proc definition of \MR.
\providecommand{\MRhref}[2]{%
  \href{http://www.ams.org/mathscinet-getitem?mr=#1}{#2}
}
\providecommand{\href}[2]{#2}
\begin{thebibliography}{100}

\bibitem{aguirre2025cuhallar}
Jacob~M Aguirre, Diego Cifuentes, Vincent Guigues, Renato~DC Monteiro, Victor~Hugo Nascimento, and Arnesh Sujanani, \emph{cuhallar: A gpu accelerated low-rank augmented lagrangian method for large-scale semidefinite programming}, arXiv preprint arXiv:2505.13719 (2025).

\bibitem{ahuja1988network}
Ravindra~K Ahuja, Thomas~L Magnanti, and James~B Orlin, \emph{Network flows},  (1988).

\bibitem{andersen2003implementing}
Erling~D Andersen, Cees Roos, and Tamas Terlaky, \emph{On implementing a primal-dual interior-point method for conic quadratic optimization}, Mathematical Programming \textbf{95} (2003), 249--277.

\bibitem{applegate2021practical}
David Applegate, Mateo D{\'\i}az, Oliver Hinder, Haihao Lu, Miles Lubin, Brendan O'Donoghue, and Warren Schudy, \emph{Practical large-scale linear programming using primal-dual hybrid gradient}, Advances in Neural Information Processing Systems \textbf{34} (2021), 20243--20257.

\bibitem{applegate2025pdlp}
\bysame, \emph{Pdlp: A practical first-order method for large-scale linear programming}, arXiv preprint arXiv:2501.07018 (2025).

\bibitem{applegate2024infeasibility}
David Applegate, Mateo D{\'\i}az, Haihao Lu, and Miles Lubin, \emph{Infeasibility detection with primal-dual hybrid gradient for large-scale linear programming}, SIAM Journal on Optimization \textbf{34} (2024), no.~1, 459--484.

\bibitem{applegate2023faster}
David Applegate, Oliver Hinder, Haihao Lu, and Miles Lubin, \emph{Faster first-order primal-dual methods for linear programming using restarts and sharpness}, Mathematical Programming \textbf{201} (2023), no.~1, 133--184.

\bibitem{bauschke2017correction}
Heinz~H Bauschke, Patrick~L Combettes, Heinz~H Bauschke, and Patrick~L Combettes, \emph{Correction to: convex analysis and monotone operator theory in hilbert spaces}, Springer, 2017.

\bibitem{bauschke2019convex}
HH~Bauschke and PL~Combettes, \emph{Convex analysis and monotone operator theory in hilbert spaces, corrected printing}, 2019.

\bibitem{bazaraa2011linear}
Mokhtar~S Bazaraa, John~J Jarvis, and Hanif~D Sherali, \emph{Linear programming and network flows}, John Wiley \& Sons, 2011.

\bibitem{beck2017first}
Amir Beck, \emph{First-order methods in optimization}, SIAM, 2017.

\bibitem{belloni2009geometric}
Alexandre Belloni and Robert~M Freund, \emph{A geometric analysis of renegar’s condition number, and its interplay with conic curvature}, Mathematical programming \textbf{119} (2009), no.~1, 95--107.

\bibitem{bertsimas1997introduction}
Dimitris Bertsimas and John~N Tsitsiklis, \emph{Introduction to linear optimization}, vol.~6, Athena Scientific Belmont, MA, 1997.

\bibitem{besard2018effective}
Tim Besard, Christophe Foket, and Bjorn De~Sutter, \emph{Effective extensible programming: unleashing julia on gpus}, IEEE Transactions on Parallel and Distributed Systems \textbf{30} (2018), no.~4, 827--841.

\bibitem{bezanson2017julia}
Jeff Bezanson, Alan Edelman, Stefan Karpinski, and Viral~B Shah, \emph{Julia: A fresh approach to numerical computing}, SIAM review \textbf{59} (2017), no.~1, 65--98.

\bibitem{bland1981ellipsoid}
Robert~G Bland, Donald Goldfarb, and Michael~J Todd, \emph{The ellipsoid method: A survey}, Operations research \textbf{29} (1981), no.~6, 1039--1091.

\bibitem{nvidianews}
Nicolas Blin, \emph{Accelerate large linear programming problems with nvidia cuopt}, 2024, \url{https://developer.nvidia.com/blog/accelerate-large-linear-programming-problems-with-nvidia-cuopt/}.

\bibitem{bnnobrs1962partitioning}
J~BnnoBRs, \emph{Partitioning procedures for solving mixed-variables programming problems}, Numer. Math \textbf{4} (1962), no.~1, 238--252.

\bibitem{boyd2011distributed}
Stephen Boyd, Neal Parikh, Eric Chu, Borja Peleato, Jonathan Eckstein, et~al., \emph{Distributed optimization and statistical learning via the alternating direction method of multipliers}, Foundations and Trends{\textregistered} in Machine learning \textbf{3} (2011), no.~1, 1--122.

\bibitem{boyd2004convex}
Stephen Boyd and Lieven Vandenberghe, \emph{Convex optimization}, Cambridge university press, 2004.

\bibitem{brown1951computational}
George Brown and Tjalling Koopmans, \emph{Computational suggestions for maximizing a linear function subject to linear inequalities}, Activity Analysis of Production and Allocation (1951), 377--380.

\bibitem{burer2006computational}
Samuel Burer and Changhui Choi, \emph{Computational enhancements in low-rank semidefinite programming}, Optimisation Methods and Software \textbf{21} (2006), no.~3, 493--512.

\bibitem{burer2003nonlinear}
Samuel Burer and Renato~DC Monteiro, \emph{A nonlinear programming algorithm for solving semidefinite programs via low-rank factorization}, Mathematical programming \textbf{95} (2003), no.~2, 329--357.

\bibitem{burer2005local}
\bysame, \emph{Local minima and convergence in low-rank semidefinite programming}, Mathematical programming \textbf{103} (2005), no.~3, 427--444.

\bibitem{chambolle2011first}
Antonin Chambolle and Thomas Pock, \emph{A first-order primal-dual algorithm for convex problems with applications to imaging}, Journal of mathematical imaging and vision \textbf{40} (2011), 120--145.

\bibitem{chambolle2016ergodic}
\bysame, \emph{On the ergodic convergence rates of a first-order primal--dual algorithm}, Mathematical Programming \textbf{159} (2016), no.~1-2, 253--287.

\bibitem{chang1989steepest}
Soo Chang and Katta Murty, \emph{The steepest descent gravitational method for linear programming}, Discrete Applied Mathematics \textbf{25} (1989), no.~3, 211--239.

\bibitem{charnes1959application}
Abraham Charnes, William~W Cooper, and Merton~H Miller, \emph{Application of linear programming to financial budgeting and the costing of funds}, The Journal of Business \textbf{32} (1959), no.~1, 20--46.

\bibitem{chen2024hpr}
Kaihuang Chen, Defeng Sun, Yancheng Yuan, Guojun Zhang, and Xinyuan Zhao, \emph{Hpr-lp: An implementation of an hpr method for solving linear programming}, arXiv preprint arXiv:2408.12179 (2024).

\bibitem{chen2014optimal}
Yunmei Chen, Guanghui Lan, and Yuyuan Ouyang, \emph{Optimal primal-dual methods for a class of saddle point problems}, SIAM Journal on Optimization \textbf{24} (2014), no.~4, 1779--1814.

\bibitem{chen2024cuclarabel}
Yuwen Chen, Danny Tse, Parth Nobel, Paul Goulart, and Stephen Boyd, \emph{Cuclarabel: Gpu acceleration for a conic optimization solver}, arXiv preprint arXiv:2412.19027 (2024).

\bibitem{cheung2003unifying}
Dennis Cheung, Felipe Cucker, and Javier Pena, \emph{Unifying condition numbers for linear programming}, Mathematics of Operations Research \textbf{28} (2003), no.~4, 609--624.

\bibitem{cipra2000best}
Barry~A Cipra, \emph{The best of the 20th century: Editors name top 10 algorithms}, SIAM news \textbf{33} (2000), no.~4, 1--2.

\bibitem{dahl2022primal}
Joachim Dahl and Erling~D Andersen, \emph{A primal-dual interior-point algorithm for nonsymmetric exponential-cone optimization}, Mathematical Programming \textbf{194} (2022), no.~1, 341--370.

\bibitem{dahleh1994control}
Munther Dahleh and Ignacio Diaz-Bobillo, \emph{Control of uncertain systems: a linear programming approach}, Prentice-Hall, Inc., 1994.

\bibitem{dantzig1963linear}
George Dantzig, \emph{Linear programming and extensions}, Princeton university press, 1963.

\bibitem{dantzig1948programming}
George~B Dantzig, \emph{Programming in a linear structure}, Washington, DC (1948).

\bibitem{dantzig1960decomposition}
George~B Dantzig and Philip Wolfe, \emph{Decomposition principle for linear programs}, Operations research \textbf{8} (1960), no.~1, 101--111.

\bibitem{davis2015convergence}
Damek Davis, \emph{Convergence rate analysis of primal-dual splitting schemes}, SIAM Journal on Optimization \textbf{25} (2015), no.~3, 1912--1943.

\bibitem{davis2016convergence}
Damek Davis and Wotao Yin, \emph{Convergence rate analysis of several splitting schemes}, Splitting methods in communication, imaging, science, and engineering (2016), 115--163.

\bibitem{de2024power}
Antonio De~Rosa, Aida Khajavirad, and Yakun Wang, \emph{On the power of linear programming for k-means clustering}, arXiv preprint arXiv:2402.01061 (2024).

\bibitem{delson1992linear}
Jerome Delson and Mohammad Shahidehpour, \emph{Linear programming applications to power system economics, planning and operations}, IEEE Transactions on Power Systems \textbf{7} (1992), no.~3, 1155--1163.

\bibitem{diakonikolas2020halpern}
Jelena Diakonikolas, \emph{Halpern iteration for near-optimal and parameter-free monotone inclusion and strong solutions to variational inequalities}, Conference on Learning Theory, PMLR, 2020, pp.~1428--1451.

\bibitem{dikin1967iterative}
Iliya~Iosiphovich Dikin, \emph{Iterative solution of problems of linear and quadratic programming}, Soviet Math. Dokl., vol.~8, 1967, pp.~674--675.

\bibitem{ding2025new}
Lijun Ding, Haihao Lu, and Jinwen Yang, \emph{New understandings and computation on augmented lagrangian methods for low-rank semidefinite programming}, arXiv preprint arXiv:2505.15775 (2025).

\bibitem{dontchev2004regularity}
Asen~L Dontchev and R~Tyrrell Rockafellar, \emph{Regularity and conditioning of solution mappings in variational analysis}, Set-Valued Analysis \textbf{12} (2004), 79--109.

\bibitem{dontchev2009implicit}
\bysame, \emph{Implicit functions and solution mappings}, vol. 543, Springer, 2009.

\bibitem{drusvyatskiy2013tilt}
Dmitriy Drusvyatskiy and Adrian~S Lewis, \emph{Tilt stability, uniform quadratic growth, and strong metric regularity of the subdifferential}, SIAM Journal on Optimization \textbf{23} (2013), no.~1, 256--267.

\bibitem{drusvyatskiy2018error}
\bysame, \emph{Error bounds, quadratic growth, and linear convergence of proximal methods}, Mathematics of Operations Research \textbf{43} (2018), no.~3, 919--948.

\bibitem{drusvyatskiy2013second}
Dmitriy Drusvyatskiy, Boris~S Mordukhovich, and Tran~TA Nghia, \emph{Second-order growth, tilt stability, and metric regularity of the subdifferential}, arXiv preprint arXiv:1304.7385 (2013).

\bibitem{eckstein1992douglas}
Jonathan Eckstein and Dimitri~P Bertsekas, \emph{On the douglas—rachford splitting method and the proximal point algorithm for maximal monotone operators}, Mathematical programming \textbf{55} (1992), 293--318.

\bibitem{epelman2000condition}
Marina Epelman and Robert~M Freund, \emph{Condition number complexity of an elementary algorithm for computing a reliable solution of a conic linear system}, Mathematical Programming \textbf{88} (2000), no.~3, 451--485.

\bibitem{farkas1902theorie}
Julius Farkas, \emph{Theorie der einfachen ungleichungen.}, Journal f{\"u}r die reine und angewandte Mathematik (Crelles Journal) \textbf{1902} (1902), no.~124, 1--27.

\bibitem{fercoq2022quadratic}
Olivier Fercoq, \emph{Quadratic error bound of the smoothed gap and the restarted averaged primal-dual hybrid gradient}, arXiv preprint arXiv:2206.03041 (2022).

\bibitem{fiacco1964computational}
Anthony~V Fiacco and Garth~P McCormick, \emph{Computational algorithm for the sequential unconstrained minimization technique for nonlinear programming}, Management Science \textbf{10} (1964), no.~4, 601--617.

\bibitem{fiacco1964sequential}
\bysame, \emph{The sequential unconstrained minimization technique for nonlinear programing, a primal-dual method}, Management Science \textbf{10} (1964), no.~2, 360--366.

\bibitem{forrest1992steepest}
John~J Forrest and Donald Goldfarb, \emph{Steepest-edge simplex algorithms for linear programming}, Mathematical programming \textbf{57} (1992), no.~1, 341--374.

\bibitem{freund1994professor}
Robert Freund, \emph{Professor george dantzig: Linear programming founder turns 80}, SIAM News, November (1994).

\bibitem{freund2003primal}
Robert~M Freund, \emph{On the primal-dual geometry of level sets in linear and conic optimization}, SIAM Journal on Optimization \textbf{13} (2003), no.~4, 1004--1013.

\bibitem{freund2004complexity}
\bysame, \emph{Complexity of convex optimization using geometry-based measures and a reference point}, Mathematical Programming \textbf{99} (2004), 197--221.

\bibitem{freund1999some}
Robert~M Freund and Jorge~R Vera, \emph{Some characterizations and properties of the “distance to ill-posedness” and the condition measure of a conic linear system}, Mathematical Programming \textbf{86} (1999), no.~2, 225--260.

\bibitem{gale1951linear}
David Gale, Harold~W Kuhn, and Albert~W Tucker, \emph{Linear programming and the theory of games}, Activity analysis of production and allocation \textbf{13} (1951), 317--335.

\bibitem{gleixner2021miplib}
Ambros Gleixner, Gregor Hendel, Gerald Gamrath, Tobias Achterberg, Michael Bastubbe, Timo Berthold, Philipp~M. Christophel, Kati Jarck, Thorsten Koch, Jeff Linderoth, Marco L\"ubbecke, Hans~D. Mittelmann, Derya Ozyurt, Ted~K. Ralphs, Domenico Salvagnin, and Yuji Shinano, \emph{{MIPLIB 2017: Data-Driven Compilation of the 6th Mixed-Integer Programming Library}}, Mathematical Programming Computation (2021).

\bibitem{goulart2024clarabel}
Paul~J Goulart and Yuwen Chen, \emph{Clarabel: An interior-point solver for conic programs with quadratic objectives}, arXiv preprint arXiv:2405.12762 (2024).

\bibitem{googleblog}
David~Applegate Haihao~Lu, \emph{Scaling up linear programming with pdlp}, \url{https://research.google/blog/scaling-up-linear-programming-with-pdlp/}, 2024-09-20.

\bibitem{halpern1967fixed}
Benjamin Halpern, \emph{Fixed points of nonexpanding maps},  (1967).

\bibitem{han2024low}
Qiushi Han, Chenxi Li, Zhenwei Lin, Caihua Chen, Qi~Deng, Dongdong Ge, Huikang Liu, and Yinyu Ye, \emph{A low-rank admm splitting approach for semidefinite programming}, arXiv preprint arXiv:2403.09133 (2024).

\bibitem{han2024accelerating}
Qiushi Han, Zhenwei Lin, Hanwen Liu, Caihua Chen, Qi~Deng, Dongdong Ge, and Yinyu Ye, \emph{Accelerating low-rank factorization-based semidefinite programming algorithms on gpu}, arXiv preprint arXiv:2407.15049 (2024).

\bibitem{hazell1974competitive}
Peter Hazell and Pasquale Scandizzo, \emph{Competitive demand structures under risk in agricultural linear programming models}, American Journal of Agricultural Economics \textbf{56} (1974), no.~2, 235--244.

\bibitem{he2012convergence}
Bingsheng He and Xiaoming Yuan, \emph{Convergence analysis of primal-dual algorithms for a saddle-point problem: from contraction perspective}, SIAM Journal on Imaging Sciences \textbf{5} (2012), no.~1, 119--149.

\bibitem{he20121}
\bysame, \emph{On the o(1/n) convergence rate of the douglas--rachford alternating direction method}, SIAM Journal on Numerical Analysis \textbf{50} (2012), no.~2, 700--709.

\bibitem{hinder2024worst}
Oliver Hinder, \emph{Worst-case analysis of restarted primal-dual hybrid gradient on totally unimodular linear programs}, Operations Research Letters \textbf{57} (2024), 107199.

\bibitem{hoffman1952approximate}
Alan~J Hoffman, \emph{On approximate solutions of systems of linear inequalities}, Journal of Research of the National Bureau of Standards \textbf{49} (1952), 263--265.

\bibitem{huang2024restarted}
Yicheng Huang, Wanyu Zhang, Hongpei Li, Dongdong Ge, Huikang Liu, and Yinyu Ye, \emph{Restarted primal-dual hybrid conjugate gradient method for large-scale quadratic programming}, arXiv preprint arXiv:2405.16160 (2024).

\bibitem{kantarovich1939mathematical}
LV~Kantarovich, \emph{Mathematical methods in the organization and planning of production}, Publication House of the Leningrad State University.[Translated in Management Sc. vol 66, 366-422] (1939).

\bibitem{karmarkar1984new}
Narendra Karmarkar, \emph{A new polynomial-time algorithm for linear programming}, Proceedings of the sixteenth annual ACM symposium on Theory of computing, 1984, pp.~302--311.

\bibitem{kempke2025low}
Nils-Christian Kempke and Thorsten Koch, \emph{Low-precision first-order method-based fix-and-propagate heuristics for large-scale mixed-integer linear optimization}, arXiv preprint arXiv:2503.10344 (2025).

\bibitem{khachiyan1980polynomial}
Leonid~G Khachiyan, \emph{Polynomial algorithms in linear programming}, USSR Computational Mathematics and Mathematical Physics \textbf{20} (1980), no.~1, 53--72.

\bibitem{kim2021accelerated}
Donghwan Kim, \emph{Accelerated proximal point method for maximally monotone operators}, Mathematical Programming \textbf{190} (2021), no.~1, 57--87.

\bibitem{klee1972good}
Victor Klee and George~J Minty, \emph{How good is the simplex algorithm}, Inequalities \textbf{3} (1972), no.~3, 159--175.

\bibitem{koberstein2005dual}
Achim Koberstein, \emph{The dual simplex method, techniques for a fast and stable implementation}, Ph.D. thesis, Paderborn University, Germany, 2005.

\bibitem{korte2011combinatorial}
Bernhard~H Korte, Jens Vygen, B~Korte, and J~Vygen, \emph{Combinatorial optimization}, vol.~1, Springer, 2011.

\bibitem{lemke1961constrained}
Carlton Lemke, \emph{The constrained gradient method of linear programming}, Journal of the Society for Industrial and Applied Mathematics \textbf{9} (1961), no.~1, 1--17.

\bibitem{lewis2016proximal}
Adrian~S Lewis and Stephen~J Wright, \emph{A proximal method for composite minimization}, Mathematical Programming \textbf{158} (2016), no.~1, 501--546.

\bibitem{liang2017activity}
Jingwei Liang, Jalal Fadili, and Gabriel Peyr{\'e}, \emph{Activity identification and local linear convergence of forward--backward-type methods}, SIAM Journal on Optimization \textbf{27} (2017), no.~1, 408--437.

\bibitem{liang2017local}
\bysame, \emph{Local convergence properties of {D}ouglas--{R}achford and alternating direction method of multipliers}, Journal of Optimization Theory and Applications \textbf{172} (2017), no.~3, 874--913.

\bibitem{liang2018local}
\bysame, \emph{Local linear convergence analysis of primal--dual splitting methods}, Optimization \textbf{67} (2018), no.~6, 821--853.

\bibitem{lieder2021convergence}
Felix Lieder, \emph{On the convergence rate of the halpern-iteration}, Optimization letters \textbf{15} (2021), no.~2, 405--418.

\bibitem{lin2025pdcs}
Zhenwei Lin, Zikai Xiong, Dongdong Ge, and Yinyu Ye, \emph{Pdcs: A primal-dual large-scale conic programming solver with gpu enhancements}, arXiv preprint arXiv:2505.00311 (2025).

\bibitem{liu2008choice}
Qian Liu and Garrett Van~Ryzin, \emph{On the choice-based linear programming model for network revenue management}, Manufacturing \& Service Operations Management \textbf{10} (2008), no.~2, 288--310.

\bibitem{liu2024new}
Tianhao Liu and Haihao Lu, \emph{A new crossover algorithm for lp inspired by the spiral dynamic of pdhg}, arXiv preprint arXiv:2409.14715 (2024).

\bibitem{lu2024mpax}
Haihao Lu, Zedong Peng, and Jinwen Yang, \emph{Mpax: Mathematical programming in jax}, arXiv preprint arXiv:2412.09734 (2024).

\bibitem{lu2023optimizing}
Haihao Lu, Duncan Simester, and Yuting Zhu, \emph{Optimizing scalable targeted marketing policies with constraints}, arXiv preprint arXiv:2312.01035 (2023).

\bibitem{lu2022infimal}
Haihao Lu and Jinwen Yang, \emph{On the infimal sub-differential size of primal-dual hybrid gradient method}, arXiv preprint arXiv:2206.12061 (2022).

\bibitem{lu2023cupdlp}
\bysame, \emph{cupdlp. jl: A gpu implementation of restarted primal-dual hybrid gradient for linear programming in julia}, arXiv preprint arXiv:2311.12180 (2023).

\bibitem{lu2023unified}
\bysame, \emph{On a unified and simplified proof for the ergodic convergence rates of ppm, pdhg and admm}, arXiv preprint arXiv:2305.02165 (2023).

\bibitem{lu2023geometry}
\bysame, \emph{On the geometry and refined rate of primal-dual hybrid gradient for linear programming}, arXiv preprint arXiv:2307.03664 (2023).

\bibitem{lu2023practical}
\bysame, \emph{A practical and optimal first-order method for large-scale convex quadratic programming}, arXiv preprint arXiv:2311.07710 (2023).

\bibitem{lu2024pdot}
\bysame, \emph{Pdot: A practical primal-dual algorithm and a gpu-based solver for optimal transport}, arXiv preprint arXiv:2407.19689 (2024).

\bibitem{lu2024restarted}
\bysame, \emph{Restarted halpern pdhg for linear programming}, arXiv preprint arXiv:2407.16144 (2024).

\bibitem{lu2023cupdlpc}
Haihao Lu, Jinwen Yang, Haodong Hu, Qi~Huangfu, Jinsong Liu, Tianhao Liu, Yinyu Ye, Chuwen Zhang, and Dongdong Ge, \emph{cupdlp-c: A strengthened implementation of cupdlp for linear programming by c language}, arXiv preprint arXiv:2312.14832 (2023).

\bibitem{lu2024power}
Haihao Lu and Luyang Zhang, \emph{The power of linear programming in sponsored listings ranking: Evidence from field experiments}, arXiv preprint arXiv:2403.14862 (2024).

\bibitem{maros2002computational}
Istv{\'a}n Maros, \emph{Computational techniques of the simplex method}, vol.~61, Springer Science \& Business Media, 2002.

\bibitem{mehrotra1992implementation}
Sanjay Mehrotra, \emph{On the implementation of a primal-dual interior point method}, SIAM Journal on optimization \textbf{2} (1992), no.~4, 575--601.

\bibitem{monge1781founding}
Gaspard Monge, \emph{The founding fathers of optimal transport}, 1781.

\bibitem{nemirovski2004prox}
Arkadi Nemirovski, \emph{Prox-method with rate of convergence o (1/t) for variational inequalities with lipschitz continuous monotone operators and smooth convex-concave saddle point problems}, SIAM Journal on Optimization \textbf{15} (2004), no.~1, 229--251.

\bibitem{nesterov1997self}
Yu~E Nesterov and Michael~J Todd, \emph{Self-scaled barriers and interior-point methods for convex programming}, Mathematics of Operations research \textbf{22} (1997), no.~1, 1--42.

\bibitem{nesterov1998primal}
\bysame, \emph{Primal-dual interior-point methods for self-scaled cones}, SIAM Journal on optimization \textbf{8} (1998), no.~2, 324--364.

\bibitem{nesterov2003introductory}
Yurii Nesterov, \emph{Introductory lectures on convex optimization: A basic course}, vol.~87, Springer Science \& Business Media, 2003.

\bibitem{nesterov2018lectures}
Yurii Nesterov et~al., \emph{Lectures on convex optimization}, vol. 137, Springer, 2018.

\bibitem{nesterov1994interior}
Yurii Nesterov and Arkadi Nemirovskii, \emph{Interior-point polynomial algorithms in convex programming}, SIAM, 1994.

\bibitem{nvidiahpc}
NVIDIA, \emph{High-performance computing: Accelerating the rate of scientific discovery.}, \url{https://www.nvidia.com/en-us/high-performance-computing/}.

\bibitem{nvidiaai}
\bysame, \emph{Why gpus are great for ai.}, \url{https://blogs.nvidia.com/blog/why-gpus-are-great-for-ai/}.

\bibitem{o2021operator}
Brendan O'Donoghue, \emph{Operator splitting for a homogeneous embedding of the linear complementarity problem}, SIAM Journal on Optimization \textbf{31} (2021), no.~3, 1999--2023.

\bibitem{o2020equivalence}
Daniel O’Connor and Lieven Vandenberghe, \emph{On the equivalence of the primal-dual hybrid gradient method and douglas--rachford splitting}, Mathematical Programming \textbf{179} (2020), no.~1, 85--108.

\bibitem{o2016conic}
Brendan O’donoghue, Eric Chu, Neal Parikh, and Stephen Boyd, \emph{Conic optimization via operator splitting and homogeneous self-dual embedding}, Journal of Optimization Theory and Applications \textbf{169} (2016), 1042--1068.

\bibitem{pacaud2024accelerating}
Fran{\c{c}}ois Pacaud, Sungho Shin, Michel Schanen, Daniel~Adrian Maldonado, and Mihai Anitescu, \emph{Accelerating condensed interior-point methods on simd/gpu architectures}, Journal of Optimization Theory and Applications \textbf{202} (2024), no.~1, 184--203.

\bibitem{park2022exact}
Jisun Park and Ernest~K Ryu, \emph{Exact optimal accelerated complexity for fixed-point iterations}, International Conference on Machine Learning, PMLR, 2022, pp.~17420--17457.

\bibitem{pazy1971asymptotic}
Amnon Pazy, \emph{Asymptotic behavior of contractions in hilbert space}, Israel Journal of Mathematics \textbf{9} (1971), 235--240.

\bibitem{pena2000understanding}
Javier Pena, \emph{Understanding the geometry of infeasible perturbations of a conic linear system}, SIAM Journal on Optimization \textbf{10} (2000), no.~2, 534--550.

\bibitem{pena2021new}
Javier Pena, Juan~C Vera, and Luis~F Zuluaga, \emph{New characterizations of hoffman constants for systems of linear constraints}, Mathematical Programming \textbf{187} (2021), 79--109.

\bibitem{pena2024easily}
Javier~F Pe{\~n}a, \emph{An easily computable upper bound on the hoffman constant for homogeneous inequality systems}, Computational Optimization and Applications \textbf{87} (2024), no.~1, 323--335.

\bibitem{peyre2019computational}
Gabriel Peyr{\'e}, Marco Cuturi, et~al., \emph{Computational optimal transport: With applications to data science}, Foundations and Trends{\textregistered} in Machine Learning \textbf{11} (2019), no.~5-6, 355--607.

\bibitem{pock2011diagonal}
Thomas Pock and Antonin Chambolle, \emph{Diagonal preconditioning for first order primal-dual algorithms in convex optimization}, 2011 International Conference on Computer Vision, IEEE, 2011, pp.~1762--1769.

\bibitem{poon2020geometry}
Clarice Poon and Jingwei Liang, \emph{Geometry of first-order methods and adaptive acceleration}, arXiv preprint arXiv:2003.03910 (2020).

\bibitem{renegar1988polynomial}
James Renegar, \emph{A polynomial-time algorithm, based on newton's method, for linear programming}, Mathematical programming \textbf{40} (1988), no.~1-3, 59--93.

\bibitem{renegar1993some}
\bysame, \emph{Some perturbation theory for linear programming}, Tech. report, Cornell University Operations Research and Industrial Engineering, 1993.

\bibitem{renegar1995incorporating}
\bysame, \emph{Incorporating condition measures into the complexity theory of linear programming}, SIAM Journal on Optimization \textbf{5} (1995), no.~3, 506--524.

\bibitem{renegar1995linear}
\bysame, \emph{Linear programming, complexity theory and elementary functional analysis}, Mathematical Programming \textbf{70} (1995), no.~1, 279--351.

\bibitem{renegar2001mathematical}
\bysame, \emph{A mathematical view of interior-point methods in convex optimization}, SIAM, 2001.

\bibitem{robinson1981some}
Stephen~M Robinson, \emph{Some continuity properties of polyhedral multifunctions}, Springer, 1981.

\bibitem{rockafellar1976monotone}
R~Tyrrell Rockafellar, \emph{Monotone operators and the proximal point algorithm}, SIAM journal on control and optimization \textbf{14} (1976), no.~5, 877--898.

\bibitem{rockafellar2009variational}
R~Tyrrell Rockafellar and Roger J-B Wets, \emph{Variational analysis}, vol. 317, Springer Science \& Business Media, 2009.

\bibitem{rosen1961gradient}
J.~Ben Rosen, \emph{The gradient projection method for nonlinear programming. part ii. nonlinear constraints}, Journal of the Society for Industrial and Applied Mathematics \textbf{9} (1961), no.~4, 514--532.

\bibitem{gurobinews}
Ed~Rothberg, \emph{New options for solving giant lps}, 2024, \url{https://cdn.gurobi.com/wp-content/uploads/New-Options-for-Solving-Giant-LPs.pdf}.

\bibitem{ruiz2001scaling}
Daniel Ruiz, \emph{A scaling algorithm to equilibrate both rows and columns norms in matrices}, Tech. report, CM-P00040415, 2001.

\bibitem{ryu2022large}
Ernest~K Ryu and Wotao Yin, \emph{Large-scale convex optimization: algorithms \& analyses via monotone operators}, Cambridge University Press, 2022.

\bibitem{sarker2007optimization}
Ruhul~Amin Sarker and Charles~S Newton, \emph{Optimization modelling: a practical approach}, CRC press, 2007.

\bibitem{schrijver1998theory}
Alexander Schrijver, \emph{Theory of linear and integer programming}, John Wiley \& Sons, 1998.

\bibitem{shin2020graph}
Sungho Shin, Carleton Coffrin, Kaarthik Sundar, and Victor~M Zavala, \emph{Graph-based modeling and decomposition of energy infrastructures}, arXiv preprint arXiv:2010.02404 (2020).

\bibitem{shin2023accelerating}
Sungho Shin, Fran{\c{c}}ois Pacaud, and Mihai Anitescu, \emph{Accelerating optimal power flow with {GPU}s: {SIMD} abstraction of nonlinear programs and condensed-space interior-point methods}, arXiv preprint arXiv:2307.16830 (2023).

\bibitem{stellato2020osqp}
Bartolomeo Stellato, Goran Banjac, Paul Goulart, Alberto Bemporad, and Stephen Boyd, \emph{Osqp: An operator splitting solver for quadratic programs}, Mathematical Programming Computation \textbf{12} (2020), no.~4, 637--672.

\bibitem{coptlink}
COPT team, \emph{Copt breaks barriers in linear programming with nvidia cuopt}, 2025, \url{https://www.shanshu.ai/news/breaking-barriers-in-linear-programming.html}.

\bibitem{toh1999sdpt3}
Kim-Chuan Toh, Michael~J Todd, and Reha~H T{\"u}t{\"u}nc{\"u}, \emph{Sdpt3—a matlab software package for semidefinite programming, version 1.3}, Optimization methods and software \textbf{11} (1999), no.~1-4, 545--581.

\bibitem{gurobilink}
Cara Touretzky, Robert Luce, and David~Torres Sanchez, \emph{Using gpus to solve lps: What's in it for me?}, 2025, \url{https://www.gurobi.com/resources/using-gpus-to-solve-lps-whats-in-it-for-me/}.

\bibitem{tseng1995linear}
Paul Tseng, \emph{On linear convergence of iterative methods for the variational inequality problem}, Journal of Computational and Applied Mathematics \textbf{60} (1995), no.~1-2, 237--252.

\bibitem{vandenberghe2010cvxopt}
Lieven Vandenberghe, \emph{The cvxopt linear and quadratic cone program solvers}, Online: http://cvxopt. org/documentation/coneprog. pdf \textbf{53} (2010).

\bibitem{vera2007primal}
Juan~Carlos Vera, Juan~Carlos Rivera, Javier Pena, and Yao Hui, \emph{A primal--dual symmetric relaxation for homogeneous conic systems}, Journal of Complexity \textbf{23} (2007), no.~2, 245--261.

\bibitem{villani2021topics}
C{\'e}dric Villani, \emph{Topics in optimal transportation}, vol.~58, American Mathematical Soc., 2021.

\bibitem{villani2009optimal}
C{\'e}dric Villani et~al., \emph{Optimal transport: old and new}, vol. 338, Springer, 2009.

\bibitem{wachter2006implementation}
Andreas W{\"a}chter and Lorenz~T Biegler, \emph{On the implementation of an interior-point filter line-search algorithm for large-scale nonlinear programming}, Mathematical programming \textbf{106} (2006), 25--57.

\bibitem{wright1997primal}
Stephen Wright, \emph{Primal-dual interior-point methods}, SIAM, 1997.

\bibitem{wright1993identifiable}
Stephen~J Wright, \emph{Identifiable surfaces in constrained optimization}, SIAM Journal on Control and Optimization \textbf{31} (1993), no.~4, 1063--1079.

\bibitem{xiong2024accessible}
Zikai Xiong, \emph{Accessible theoretical complexity of the restarted primal-dual hybrid gradient method for linear programs with unique optima}, arXiv preprint arXiv:2410.04043 (2024).

\bibitem{xiong2025high}
\bysame, \emph{High-probability polynomial-time complexity of restarted pdhg for linear programming}, arXiv preprint arXiv:2501.00728 (2025).

\bibitem{xiong2023relation}
Zikai Xiong and Robert~M Freund, \emph{On the relation between lp sharpness and limiting error ratio and complexity implications for restarted pdhg}, arXiv preprint arXiv:2312.13773 (2023).

\bibitem{xiong2024role}
\bysame, \emph{The role of level-set geometry on the performance of pdhg for conic linear optimization}, arXiv preprint arXiv:2406.01942 (2024).

\bibitem{xiong2023computational}
Zikai Xiong and Robert~Michael Freund, \emph{Computational guarantees for restarted pdhg for lp based on" limiting error ratios" and lp sharpness}, arXiv preprint arXiv:2312.14774 (2023).

\bibitem{ye1994nl}
Yinyu Ye, Michael~J Todd, and Shinji Mizuno, \emph{An $o (\sqrt{n}l)$-iteration homogeneous and self-dual linear programming algorithm}, Mathematics of operations research \textbf{19} (1994), no.~1, 53--67.

\bibitem{yudin1976informational}
David~B Yudin and Arkadi~S Nemirovskii, \emph{Informational complexity and efficient methods for the solution of convex extremal problems}, Matekon \textbf{13} (1976), no.~2, 22--45.

\bibitem{zheng2007metric}
Xi~Yin Zheng and Kung Fu~Ng, \emph{Metric subregularity and constraint qualifications for convex generalized equations in banach spaces}, SIAM Journal on Optimization \textbf{18} (2007), no.~2, 437--460.

\bibitem{zheng2014metric}
Xi~Yin Zheng and Kung~Fu Ng, \emph{Metric subregularity of piecewise linear multifunctions and applications to piecewise linear multiobjective optimization}, SIAM Journal on Optimization \textbf{24} (2014), no.~1, 154--174.

\bibitem{zhou2008linear}
Peng Zhou and Beng~Wah Ang, \emph{Linear programming models for measuring economy-wide energy efficiency performance}, Energy Policy \textbf{36} (2008), no.~8, 2911--2916.

\bibitem{zhu2008efficient}
Mingqiang Zhu and Tony Chan, \emph{An efficient primal-dual hybrid gradient algorithm for total variation image restoration}, Ucla Cam Report \textbf{34} (2008), 8--34.

\bibitem{zoutendijk1960methods}
Guus Zoutendijk, \emph{Methods of feasible directions}, 1960 (1960).

\bibitem{zoutendijk1970some}
\bysame, \emph{Some algorithms based on the principle of feasible directions}, Nonlinear programming, Elsevier, 1970, pp.~93--121.

\end{thebibliography}

\end{document}